\numberwithin{equation}{section}
\newtheorem{definition}{Definition}
\newtheorem{proposition}{Proposition}
\newtheorem{lemma}{Lemma}
\newtheorem{theorem}{Theorem}
\newtheorem{remark}{Remark}
\newtheorem{corollary}{Corollary}
\theoremstyle{definition}
\begin{document}

\newcommand{\riem}{(M^m, \langle \, , \, \rangle)}
\newcommand{\Hess}{\mathrm{Hess}\, }
\newcommand{\hess}{\mathrm{hess}\, }
\newcommand{\ess}{\mathrm{ess}}
\newcommand{\longra}{\longrightarrow}
\newcommand{\eps}{\varepsilon}
\newcommand{\vol}{\mathrm{vol}}
\newcommand{\di}{\mathrm{d}}
\newcommand{\R}{\mathbb R}
\newcommand{\C}{\mathbb C}
\newcommand{\Z}{\mathbb Z}
\newcommand{\N}{\mathbb N}
\newcommand{\HH}{\mathbb H}
\newcommand{\Sph}{\mathbb S}
\newcommand{\metric}{\langle \, , \, \rangle}
\newcommand{\metricN}{( \, , \, )}
\newcommand{\lip}{\mathrm{Lip}}
\newcommand{\loc}{\mathrm{loc}}
\newcommand{\diver}{\mathrm{div}}
\newcommand{\disp}{\displaystyle}
\newcommand{\rad}{\mathrm{rad}}
\newcommand{\Ricc}{\mathrm{Ricc}}
\newcommand{\sn}{\mathrm{sn}}
\newcommand{\cn}{\mathrm{cn}}
\newcommand{\ink}{\mathrm{in}}
\newcommand{\hol}{\mathrm{H\ddot{o}l}}
\newcommand{\capac}{\mathrm{cap}}
\newcommand{\ricc}{\operatorname{Ricc}}
\newcommand{\supp}{\operatorname{supp}}
\newcommand{\sgn}{\operatorname{sgn}}
\newcommand{\F}{\mathcal{F}}
\newcommand{\cut}{\mathrm{cut}}
\newcommand{\rk}{\mathrm{rk}}
\newcommand{\crit}{\mathrm{crit}}
\newcommand{\diam}{\mathrm{diam}}
\newcommand{\haus}{\mathcal{H}}
\newcommand{\gr}{\mathcal{G}}
\newcommand{\Bo}{\mathbb{B}}
\newcommand{\ra}{\rightarrow}
\newcommand{\dist}{\mathrm{dist}}
\newcommand{\II}{\mathrm{II}}
\newcommand{\DD}{\mathbb{D}}
\newcommand{\NN}{\mathbb{N}}
% \title[short text for running head]{full title}
\title{Density and spectrum of minimal submanifolds in space forms}

%    Only \author and \address are required; other information is
%    optional.  Remove any unused author tags.

%    author one information
% \author[short version for running head]{name for top of paper}
\author[B. P. Lima]{B. Pessoa Lima}
\address{Dep. Matem\'atica, UFPI, Campus Ministro Petr\^onio Portela, 64049-550 Teresina - PI}
\curraddr{}
\email{barnabe@ufpi.edu.br}
\thanks{}

\author[L. Mari]{L. Mari}
\address{Dep. Matem\'atica, UFC, Campus do Pici - Bloco 914, 60.455-760, Fortaleza - CE }
\curraddr{}
\email{mari@mat.ufc.br}
\thanks{The second author is supported by the grant PRONEX - N\'ucleo de An\'alise Geom\'etrica e Aplicac\~oes 
Processo nº PR2-0054-00009.01.00/11}

\author[J. F. Montenegro]{J. Fabio Montenegro}
\address{Dep. Matem\'atica, UFC, Campus do Pici - Bloco 914, 60.455-760, Fortaleza - CE }
\curraddr{}
\email{fabio@mat.ufc.br}
\thanks{The third author is partially supported by CNPq}

%    author two information
\author[F. B. Vieira]{F. de Brito Vieira}
\address{Dep. Matem\'atica, UFC, Campus do Pici - Bloco 914, 60.455-760, Fortaleza - CE }
\curraddr{}
\email{cianev@hotmail.com}
\thanks{}

%    \subjclass is required.
\subjclass[2010]{Primary 58J50; Secondary 58C21, 35P15}
% 58J50  View Publications (2000-now) Spectral problems; spectral geometry; scattering theory
% 53C21  View Publications (1980-now) Methods of Riemannian geometry, including PDE methods; curvature restrictions

\date{\today}

\dedicatory{}

%    "Communicated by" -- provide editor's name; required.
\commby{}

\maketitle

\begin{abstract}
Let $\varphi: M^m \ra N^n$ be a minimal, proper immersion in an ambient space suitably close to a space form $\mathbb{N}^n_k$ of curvature $-k\le 0$. In this paper, we are interested in the relation between the density function $\Theta(r)$ of $M$ and the spectrum of its Laplace-Beltrami operator. In particular, we prove that if $\Theta(r)$ has subexponential growth (when $k<0$) or sub-polynomial growth ($k=0$) along a sequence, then the spectrum of $M^m$ is the same as that of the space form $\mathbb{N}^m_k$. Notably, the result applies to Anderson's (smooth) solutions of Plateau's problem at infinity on the hyperbolic space, independently of their boundary regularity. We also give a simple condition on the second fundamental form that ensures $M$ to have finite density. In particular, we show that minimal submanifolds with finite total curvature in the hyperbolic space also have finite density.
\keywords{density \and spectrum \and Laplace-Beltrami \and minimal submanifolds \and monotonicity}
% \PACS{PACS code1 \and PACS code2 \and more}
%\subclass{58J50 \and 58C21 \and 35P15}
\end{abstract}

\section{Introduction}
\label{intro}

Let $M^m$ be a minimal, properly immersed submanifold in a complete ambient space $N^n$. In the present paper, we are interested in the case when $N$ is close, in a sense made precise below, to a space form $\NN_k^n$ of curvature $-k\le 0$. In particular, our focus is the study of the spectrum of the Laplace Beltrami operator $-\Delta$ on $M$ and its relationship with the density at infinity of $M$, that is, the limit as $r \ra +\infty$ of the (monotone) quantity
\begin{equation}\label{def_densityinfty_hyp}
\Theta(r) \doteq \frac{\vol(M \cap B_r)}{V_k(r)},
\end{equation}
where $B_r$ indicates a geodesic ball of radius $r$ in $N^n$ and $V_k(r)$ is the volume of a geodesic ball of radius $r$ in $\NN^m_k$. Hereafter, we will say that $M$ has finite density if 
$$
\Theta(+\infty) \doteq \lim_{r \ra +\infty} \Theta(r) <+\infty. 
$$
To properly put our results into perspective, we briefly recall few facts about the spectrum of the Laplacian on a geodesically complete manifold. It is known by works of P. Chernoff \cite{chernoff} and R.S. Strichartz \cite{strichartz} that $-\Delta$ on a complete manifold is essentially self-adjoint on the domain $C^\infty_c(M)$, and thus it admits a unique self-adjoint extension, which we still call $-\Delta$. Since $-\Delta$ is positive and self-adjoint, its spectrum is the set of $\lambda \geq0$ such that
$\Delta +\lambda I$ does not have bounded inverse. Sometimes we say spectrum of $M$ rather than spectrum of $-\Delta$  and we denote it by $\sigma(M)$.
The well-known Weyl's characterization for the spectrum of a self-adjoint operator in a Hilbert space implies the following
\begin{lemma}\cite[Lemma 4.1.2]{Davies}\label{lem_weyl}
\label{lem3}
A number $\lambda \in \mathbb{R}$ lies in $\sigma(M)$ if and only if there exists a sequence of nonzero functions
$u_j\in \mathrm{Dom}(-\Delta)$ such that
\begin{equation}\label{L2norm} 
\|\Delta u_j+ \lambda u_j\|_{2} = o\big( \|u_j\|_2\big) \qquad \text{as } \, j \ra +\infty.
\end{equation}
\end{lemma}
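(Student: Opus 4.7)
The statement is precisely Weyl's criterion for the self-adjoint operator $-\Delta$, and my plan is to split the proof into the two implications, relying on the unique self-adjoint extension of $-\Delta$ (recalled just before the lemma) together with the spectral theorem for positive self-adjoint operators on a Hilbert space.

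For the sufficiency implication I would argue by contrapositive. Suppose $\lambda \notin \sigma(-\Delta)$. By definition, $-\Delta - \lambda I : \mathrm{Dom}(-\Delta) \to L^2(M)$ admits a bounded inverse $R_\lambda$. Then for every nonzero $u \in \mathrm{Dom}(-\Delta)$, writing $u = R_\lambda(-\Delta - \lambda I)u$ and applying the operator norm bound on $R_\lambda$ yields
$$
\|u\|_2 \leq \|R_\lambda\|_{L^2 \to L^2}\, \|\Delta u + \lambda u\|_2.
$$
A sequence $\{u_j\}$ satisfying \eqref{L2norm} would then force $\|u_j\|_2 = o(\|u_j\|_2)$ as $j \to +\infty$, a contradiction.

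For the necessity implication I would invoke the spectral theorem. Let $\{E_t\}_{t \in \R}$ denote the projection-valued spectral resolution of $-\Delta$. A standard consequence of self-adjointness is that $\lambda \in \sigma(-\Delta)$ precisely when the spectral projection $E([\lambda - \eps, \lambda + \eps])$ is non-zero for every $\eps > 0$. Picking a sequence $\eps_j \downarrow 0$ and choosing a nonzero vector $u_j$ in the range of $E([\lambda - \eps_j, \lambda + \eps_j])$, the Borel functional calculus guarantees that $u_j \in \mathrm{Dom}(-\Delta)$ and gives
$$
\|\Delta u_j + \lambda u_j\|_2^2 = \int_{\lambda - \eps_j}^{\lambda + \eps_j} (t-\lambda)^2 \, \di \langle E_t u_j, u_j\rangle \leq \eps_j^2 \|u_j\|_2^2,
$$
so that \eqref{L2norm} holds with the explicit rate $\eps_j$.

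The only non-routine ingredient is the equivalence, used in the second implication, between $\lambda \in \sigma(-\Delta)$ and the non-triviality of the spectral projections attached to arbitrarily small neighbourhoods of $\lambda$; this is precisely the content of the general Weyl criterion for self-adjoint operators, and can be quoted from \cite[Chapter 4]{Davies}. Once this is in hand, both implications reduce to one-line estimates (a resolvent bound in the first case, a trivial functional-calculus integral in the second), so there is no substantive geometric obstacle in the proof.
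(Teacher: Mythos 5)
Your argument is correct and is exactly the standard textbook proof. Note, however, that the paper does not prove this lemma at all — it simply cites \cite[Lemma 4.1.2]{Davies} — so there is nothing to compare "against"; your two-step argument (a resolvent bound for the contrapositive of sufficiency, and a spectral-projection/functional-calculus construction for necessity) is precisely the proof given in Davies' book. One small terminological remark: the fact you invoke, namely that $\lambda\in\sigma(-\Delta)$ if and only if $E\big([\lambda-\eps,\lambda+\eps]\big)\neq 0$ for every $\eps>0$, is usually phrased as ``$\sigma(-\Delta)$ equals the support of the spectral measure'' rather than ``the general Weyl criterion''; calling it the Weyl criterion makes the argument sound circular even though it is not, since that projection statement has its own short proof from the functional calculus independent of the sequential characterization you are establishing.
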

In the literature, characterizations of the whole $\sigma(M)$ are known only in few special cases. Among them, the Euclidean space, for which $\sigma(\R^m) = [0,\infty)$, and the hyperbolic space $\HH^m_k$, for which 
\begin{equation}\label{spectrum_Hm}
\sigma(\HH^m_k) = \left[ \frac{(m-1)^2k}{4}, +\infty \right)\!. 
\end{equation}
The approach to guarantee that $\sigma(M) = [c, +\infty)$, for some $c \ge 0$, usually splits into two parts. The first one is to show that $\inf \sigma(M) \ge c$ via, for instance, the Laplacian comparison theorem from below (\cite{mckean}, \cite{GPB}), and the second one is to produce a sequence like in lemma \ref{lem3} for each $\lambda>c$. This step is accomplished by considering radial functions of compact support, and, at least in the first results on the topic like the one in \cite{donnelly}, uses the comparison theorems on both sides for $\Delta \rho$, $\rho$ being the distance from a fixed origin $o \in M$. Therefore, the method needs both a pinching on the sectional curvature and the smoothness of $\rho$, that is, that $o$ is a pole of $M$ (see \cite{donnelly}, \cite{escobarfreire},\cite{jli} and Corollary 2.17 in \cite{BMR_memoirs}), which is a severe topological restriction. Since then, various efforts were made to weaken both the curvature and the topological assumptions. We briefly overview some of the main achievements.\par
In \cite{Kumura}, Kumura observed that to perform the second step (and just for it) it is enough that there exists a relatively compact, mean convex, smooth open set $\Omega$ with the property that the normal exponential map realizes a global diffeomorphism $\partial \Omega \times \R_0^+ \ra M \backslash \Omega$. Conditions of this kind seem, however, unavoidable for his techniques to work. On the other hand, in \cite{Kumura_2} the author drastically weakened the curvature requirements needed to establish Step 2, by replacing the two-sided pinching on the sectional curvature with a combination of a lower bound on a suitably weighted volume and an $L^p$-bound on the Ricci curvature. 

Regarding the need for a pole, major recent improvements have been made in a series of papers (\cite{sturm}, \cite{wang}, \cite{Lu}, \cite{charalambouslu}): their guiding idea was to replace the $L^2$-norm in relation \eqref{L2norm} with the $L^1$-norm, which via a trick in \cite{wang}, \cite{Lu} enables to use smoothed distance functions to construct sequences as in Lemma \ref{lem_weyl}. Building on deep function-theoretic results due to Sturm \cite{sturm} and Charalambous-Lu \cite{charalambouslu}, in \cite{wang}, \cite{Lu} the authors proved that $\sigma(M) =[0,\infty )$ when
\begin{equation}\label{iporicci}
\liminf_{\rho(x) \ra +\infty} \Ricc_x = 0
\end{equation}
in the sense of quadratic forms, without any topological assumption. This remarkable result improves on \cite{jli} and \cite{escobarfreire} (see also Corollary 2.17 in \cite{BMR_memoirs}), where $M$ was assumed to have a pole. Further refinements of \eqref{iporicci} have been given in \cite{charalambouslu}. However, when \eqref{iporicci} does not hold, the situation is more delicate and is still the subject of an active area of research. In this respect, we also quote the general function-theoretic criteria developed by H. Donnelly \cite{donnelly_exha}, and K.D. Elworthy and F-Y. Wang \cite{elworthywang} to ensure that a half-line belongs to the spectrum of $M$.\par
The main concern in this paper is to achieve, in the above-mentioned setting of minimal submanifolds $\varphi : M \ra N$, a characterization of the whole $\sigma(M)$ free from curvature or topological conditions on $M$ (in this respect, observe that the completeness of $M$ follows from that of $N$ and the properness of $\varphi$). It is known by \cite{Cheung} and \cite{GPB} that for a minimal immersion $\varphi : M^m \ra \NN^n_k$ the fundamental tone of $M$, $\inf \sigma(M)$, is at least that of $\NN^m_k$, i.e.,
\begin{equation}\label{infspec_intro}
\inf \sigma(M) \ge \frac{(m-1)^2k}{4}.
\end{equation}
Moreover, as a corollary of \cite{Kumura} and \cite{BJM}, \cite{BC}, if the second fundamental form $\II$ satisfies the decay estimate 
\begin{equation}\label{intro_decayII}
\begin{array}{ll}
\disp \lim_{\rho(x) \ra +\infty} \rho(x)|\II(x)| =0 & \quad \text{if } \, k=0 \\[0.3cm]
\disp \lim_{\rho(x) \ra +\infty} |\II(x)| =0 & \quad \text{if } \, k>0 \\[0.2cm]  
\end{array}
\end{equation}
($\rho(x)$ being the intrinsic distance with respect to some fixed origin $o \in M$), then $M$ has the same spectrum that a totally geodesic submanifold $\NN^m_k \subset \NN^n_k$, that is, 
\begin{equation}\label{spectrum_McomeHm}
\sigma(M) = \left[ \frac{(m-1)^2k}{4}, +\infty \right)\!. 
\end{equation}
According to \cite{Anderson_prep}, \cite{filho}, \eqref{intro_decayII} is ensured when $M$ has finite total curvature, that is, when
\begin{equation}\label{finite_total}
\int_M |\II|^m < +\infty.
\end{equation}
\begin{remark}
\emph{A characterization of the essential spectrum, similar to \eqref{spectrum_McomeHm}, also holds for submanifolds of the hyperbolic space $\HH^n_k$ with constant (normalized) mean curvature $H<\sqrt{k}$. There, condition \eqref{finite_total} is replaced by the finiteness of the $L^m$-norm of the traceless second fundamental form. For deepening, see \cite{castillon}.
}
\end{remark}
Condition \eqref{intro_decayII} is a quite binding requirement for \eqref{spectrum_McomeHm} to hold, since it needs a pointwise control of the second fundamental form, and the search for more manageable conditions has been at the heart of the present paper. Here, we identify a suitable growth on the density function $\Theta(r)$ \emph{along a sequence} as a natural candidate to replace it, see \eqref{bellissima}. As a very special case, \eqref{spectrum_McomeHm} holds when $M$ has finite density. It might be interesting that just a volume growth condition along a sequence could control the whole spectrum of $M$; for this to happen, the minimality condition enters in a crucial and subtle way. 

Regarding the relation between \eqref{finite_total} and the finiteness of $\Theta(+\infty)$, we remark that their interplay has been investigated in depth for minimal submanifolds of $\R^n$, but the case of $\HH^n_k$ seems to be partly unexplored. In the next section, we will briefly discuss the state of the art, to the best of our knowledge. As a corollary of Theorem \ref{teo_finitedens} below, we will show the following  
\begin{corollary}\label{cor_densitycurvature}
Let $M^m$ be a minimal properly immersed submanifold in $\HH^n_k $. If $M$ has finite total curvature, then $\Theta(+\infty)<+\infty$.
\end{corollary}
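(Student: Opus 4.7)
My plan is to derive this corollary as a direct application of Theorem \ref{teo_finitedens}. As announced in the Introduction, that theorem provides a sufficient condition on the second fundamental form for $M$ to have finite density, so the entire task reduces to checking that $\int_M |\II|^m <+\infty$ implies its hypothesis.

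The analytic input I would invoke is the pointwise decay of $|\II|$ at infinity, due to \cite{Anderson_prep} and \cite{filho} and already recalled in the Introduction alongside \eqref{intro_decayII}: for a minimal immersion into $\HH^n_k$ with $k>0$, finite total curvature forces $|\II(x)| \to 0$ as $\rho(x) \to +\infty$. The mechanism is a standard $\varepsilon$-regularity statement, namely the existence of a universal $\varepsilon_0>0$ such that whenever $\int_{B_1(x)\cap M} |\II|^m \le \varepsilon_0$ one has an a priori pointwise bound $|\II(x)| \le C\bigl(\int_{B_1(x)\cap M}|\II|^m\bigr)^{1/m}$. Since $\int_M |\II|^m$ is convergent, its local tails vanish along any sequence escaping to infinity, yielding the decay. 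Plugging this decay into Theorem \ref{teo_finitedens} concludes $\Theta(+\infty)<+\infty$.

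The genuine obstacle is Theorem \ref{teo_finitedens} itself, not the corollary. I expect it to rest on a monotonicity-type formula for the density of minimal submanifolds in $\HH^n_k$: the function $\Theta(r)$ is already known to be nondecreasing, and the crux is to produce a uniform upper bound. This is likely accomplished by setting up a differential or integral inequality for $\Theta(r)$ whose error terms are controlled by integrals of $|\II|$ against suitable hyperbolic weights, so that the decay of $|\II|$ at infinity renders the cumulative error integrable in $r$. Once that theorem is in hand, the corollary follows by the two-step reduction described above.
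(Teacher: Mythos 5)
Your overall strategy — reducing the corollary to Theorem~\ref{teo_finitedens} by establishing a pointwise decay of the second fundamental form — matches the paper's one-line argument. However, the decay you invoke is too weak to meet the hypothesis \eqref{approaching_hyp}, and the step where you "plug this decay into Theorem~\ref{teo_finitedens}" does not go through.

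You cite \eqref{intro_decayII} in the hyperbolic case, i.e.\ $|\II(x)|\to 0$, and the $\varepsilon$-regularity mechanism you sketch indeed yields no more than this: knowing that the local tails of $\int_M|\II|^m$ vanish gives $|\II(x)|\to 0$, but it gives no quantitative rate. Theorem~\ref{teo_finitedens} for an $\HH^n_k$-pinched ambient, by contrast, requires
$$
|\II(x)|^2 \le \frac{c}{\rho(x)\,\log^{\alpha}\rho(x)} \qquad (\alpha>1),
$$
and a second fundamental form decaying to zero arbitrarily slowly (say $|\II|\sim (\log\log\rho)^{-1}$) would violate this while still satisfying $|\II|\to 0$. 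The missing ingredient is the stronger decay actually used in the paper: by \cite{filho} and \cite{PigolaVeronelli}, finite total curvature of a minimal $M^m\ra\HH^n_k$ forces $\rho(x)|\II(x)|\to 0$, i.e.\ $|\II(x)|=o(\rho(x)^{-1})$ — the same rate as in the Euclidean case \eqref{decay_Rn}, not merely the weaker limit stated in \eqref{intro_decayII}. With that, $|\II|^2=o(\rho^{-2})$ dominates $c/(\rho\log^\alpha\rho)$ for large $\rho$ and \eqref{approaching_hyp} follows. Obtaining this $o(\rho^{-1})$ rate is not a consequence of $\varepsilon$-regularity alone; it rests on the finer decay estimates of de Oliveira Filho and Pigola--Veronelli, so you should cite those directly rather than derive the decay from scratch.
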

As far as we know, this result was previously known just in dimension $m=2$ via a Chern-Osserman type inequality, see the next section for further details.\par
We now come to our results, beginning with defining the ambient spaces which we are interested in: these are manifolds with a pole, whose radial sectional curvature is suitably pinched to that of the model $\NN^n_k$.
\begin{definition}\label{def_closeHn}
Let $N^n$ possess a pole $\bar o$ and denote with $\bar \rho$ the distance function from $\bar o$. Assume that the radial sectional curvature $\bar K_\rad$ of $N$, that is, the sectional curvature restricted to planes $\pi$ containing $\bar\nabla \bar \rho$, satisfies
\begin{equation}\label{pinchsectio}
- G\big( \bar \rho(x) \big) \le \bar K_\rad(\pi_x) \le -k \le 0 \qquad \forall \, x \in N \backslash \{\bar o\},
\end{equation}
for some $G \in C^0(\R^+_0)$. We say that
\begin{itemize}
\item[$(i)$] \emph{$N$ has a pointwise (respectively, integral) pinching to $\R^n$ if $k=0$ and 
$$
sG(s) \ra 0 \ \text{ as } \, s \ra +\infty \qquad \big(\textrm{respectively, $\, sG(s) \in L^1(+\infty)$}\big);
$$
}
\item[$(ii)$] \emph{$N$ has a pointwise (respectively, integral) pinching to $\HH^n_k$ if $k>0$ and 
$$
G(s)-k \ra 0 \ \text{ as } \, s \ra +\infty \qquad \big(\textrm{respectively, $\, G(s)-k \in L^1(+\infty)$}\big).
$$
}
\end{itemize}

\end{definition}

Hereafter, given an ambient manifold $N$ with a pole $\bar o$, the density function $\Theta(r)$ will always be computed by taking extrinsic balls centered at $\bar o$.\par 
Our main achievements are the following two theorems. The first one characterizes $\sigma(M)$ when the density of $M$ grows subexponentially (respectively, sub-polynomially) along a sequence. Condition \eqref{bellissima} below is very much in the spirit of a classical growth requirement due to R. Brooks \cite{brooks} and Y. Higuchi \cite{higuchi} to bound from above the infimum of the essential spectrum of $-\Delta$. However, we stress that our Theorem \ref{teo_spectrum} seems to be the first result in the literature characterizing the whole spectrum of $M$ under just a mild volume assumption.

\begin{theorem}\label{teo_spectrum}
Let $\varphi : M^m \ra N^n$ be a minimal properly immersed submanifold, and suppose that $N$ has a pointwise or an integral pinching to a space form. If either
\begin{equation}\label{bellissima}
\begin{array}{ll}
\text{$N$ is pinched to $\HH^n_k$, and} & \qquad \disp \liminf_{s \ra +\infty} \frac{\log \Theta(s)}{s} = 0, \quad \text{or } \\[0.4cm]
\text{$N$ is pinched to $\R^n$, and} & \qquad \disp \liminf_{s \ra +\infty} \frac{\log \Theta(s)}{\log s} = 0.
\end{array}
\end{equation}
then
\begin{equation}\label{wholespectrum}
\sigma(M) = \left[ \frac{(m-1)^2k}{4}, +\infty\right)\!.
\end{equation}
\end{theorem}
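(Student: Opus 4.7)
The plan is to verify separately (i) $\inf\sigma(M)\geq (m-1)^2k/4$ and (ii) every $\lambda>(m-1)^2k/4$ lies in $\sigma(M)$. Write $\bar\rho$ for both the distance from the pole $\bar o$ in $N$ and its pull-back on $M$. For (i), the curvature bound $\bar K_{\rad}\leq -k$ and Hessian comparison give
\[
\bar{\mathrm{Hess}}(\bar\rho)(X,X) \geq \sqrt{k}\coth(\sqrt{k}\bar\rho)\bigl(|X|^2-\langle X,\bar\nabla\bar\rho\rangle^2\bigr).
\]
Tracing over $T_xM$, combined with minimality ($\vec H=0$) and the obvious $|\nabla\bar\rho|_M\leq 1$, yields $\Delta_M\bar\rho \geq (m-1)\sqrt{k}\coth(\sqrt{k}\bar\rho)\geq (m-1)\sqrt{k}$. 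Integrating $u^2\Delta_M\bar\rho$ by parts for $u\in C^\infty_c(M)$ and applying Cauchy--Schwarz yields the McKean-type inequality $\|\nabla u\|_2\geq\tfrac{(m-1)\sqrt{k}}{2}\|u\|_2$, hence $\inf\sigma(M)\geq (m-1)^2k/4$.

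For (ii), fix $\lambda>(m-1)^2k/4$ and let $\phi=\phi_\lambda\in C^2([0,\infty))$ solve the radial model eigenvalue equation
\[
\phi''(r) + (m-1)\sqrt{k}\coth(\sqrt{k}r)\,\phi'(r) + \lambda\phi(r) = 0, \qquad \phi(0)=1,\ \phi'(0)=0,
\]
with the coefficient read as $(m-1)/r$ when $k=0$. The Liouville change $\phi=h_k^{-(m-1)/2}\psi$, where $h_k(r)=\sinh(\sqrt{k}r)/\sqrt{k}$ or $r$, transforms it into $\psi''+(\lambda-(m-1)^2k/4+O(h_k^{-2}))\psi=0$, whose solutions are asymptotically bounded oscillations; hence $|\phi|+|\phi'|\asymp h_k^{-(m-1)/2}$ at infinity. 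Given a sequence $r_j\to\infty$ realizing the $\liminf$ in \eqref{bellissima}, a diagonal extraction of Brooks--Higuchi flavour (exploiting the monotonicity of $\Theta$ and the fact that an exponential, resp.\ polynomial, lower bound on each quotient $\Theta(s+T)/\Theta(s)$ would contradict \eqref{bellissima}) produces, after passing to a subsequence, numbers $T_j\to\infty$ with $\Theta(r_j+T_j)\leq C\Theta(r_j)$. Choose a smooth cutoff $\chi_j$ supported in $[r_j-1,r_j+T_j+1]$, identically $1$ on $[r_j,r_j+T_j]$, with $|\chi_j'|+|\chi_j''|$ bounded, and set
\[
u_j(x)=\chi_j(\bar\rho(x))\,\phi(\bar\rho(x)).
\]

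Using $\Delta_M(f\circ\bar\rho)=f''|\nabla\bar\rho|_M^2+f'\Delta_M\bar\rho$ and the ODE for $\phi$, one expands
\begin{align*}
\Delta u_j+\lambda u_j &= \chi_j\phi'\bigl[\Delta_M\bar\rho-(m-1)\sqrt{k}\coth(\sqrt{k}\bar\rho)\bigr] + \chi_j\phi''\bigl[|\nabla\bar\rho|_M^2-1\bigr] \\
&\quad + 2\chi_j'\phi'|\nabla\bar\rho|_M^2 + \chi_j'\phi\,\Delta_M\bar\rho + \chi_j''\phi\,|\nabla\bar\rho|_M^2.
\end{align*}
The first line is the \emph{pinching error}: two-sided Hessian comparison supplied by $-G(\bar\rho)\leq\bar K_{\rad}\leq -k$ forces $\Delta_M\bar\rho-(m-1)\sqrt{k}\coth(\sqrt{k}\bar\rho)$ and $1-|\nabla\bar\rho|_M^2$ to be either pointwise small (pointwise pinching) or radially $L^1$-small (integral pinching). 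The second line is the \emph{cutoff error}, supported in the two unit-thick annuli around the endpoints. Passing to radial integrals via the coarea formula, and using $|\phi|^2\asymp h_k^{-(m-1)}$ together with $V_k'(r)\asymp h_k^{m-1}(r)$ and $\vol(M\cap B_r)=V_k(r)\Theta(r)$, one obtains
\[
\|u_j\|_2^2\gtrsim T_j\,\Theta(r_j), \qquad \|\Delta u_j+\lambda u_j\|_2^2\lesssim \Theta(r_j+T_j+1)+\Theta(r_j)+\text{(pinching integral)}.
\]
By construction $\Theta(r_j+T_j+1)/(T_j\Theta(r_j))=O(1/T_j)\to 0$, and the pinching integral is $o(T_j\Theta(r_j))$ by the pinching hypothesis, so $\|\Delta u_j+\lambda u_j\|_2=o(\|u_j\|_2)$ and Lemma~\ref{lem_weyl} yields $\lambda\in\sigma(M)$.

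The principal difficulty is the delicate calibration in step (ii): $\phi$ decays at exactly the rate reciprocal to the model's volume growth, so the $L^2$-mass of $u_j$ is essentially $\Theta(r_j)$ times the annulus thickness $T_j$, while the cutoff error scales as $\Theta(r_j+T_j)$; any faster-than-permitted jump of $\Theta$ across $[r_j,r_j+T_j]$ would destroy the Weyl estimate. The fact that \eqref{bellissima} is only assumed along a sequence (and not as $s\to\infty$) is exactly what is needed for the Brooks--Higuchi diagonal extraction of $T_j\to\infty$ with $\Theta(r_j+T_j)\lesssim\Theta(r_j)$. A secondary technical point is the integral-pinching case, where the pinching discrepancy is not pointwise small but only $L^1$ radially, so one must combine Cauchy--Schwarz with the coarea formula to upgrade its radial integrability to $L^2$-smallness of its contribution to $\Delta u_j+\lambda u_j$.
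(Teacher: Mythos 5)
Your Step (i) matches the paper's (McKean-type or, equivalently, Cheeger's inequality from $\Delta r\ge (m-1)\sqrt k$). For Step (ii), your template---a radial profile decaying like $v_k(r)^{-1/2}$, cut off to an extrinsic slab, together with a Brooks--Higuchi extraction of slabs on which $\Theta$ almost stops growing---is in the spirit of the paper's, which uses $\psi=e^{i\beta r}/\sqrt{v_k(r)}$ and slabs $[t,s]$ with both endpoints divergent. However, the estimate on $\|\Delta u_j+\lambda u_j\|_2^2$ has a gap that strikes at the heart of the argument.

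You assert that the two-sided Hessian comparison ``forces $\Delta_M\bar\rho-(m-1)\sqrt{k}\coth(\sqrt{k}\bar\rho)$ and $1-|\nabla\bar\rho|_M^2$ to be either pointwise small (pointwise pinching) or radially $L^1$-small (integral pinching).'' This is false. Hessian comparison controls $\overline{\Hess}(\bar\rho)$ as a bilinear form on $TN$; it says nothing about the orthogonal decomposition of $\bar\nabla\bar\rho$ along $M$. The quantity $1-|\nabla\bar\rho|_M^2=|\bar\nabla^\perp\bar\rho|^2$ can be anything in $[0,1]$ along a minimal submanifold no matter how tightly $N$ is pinched, and since $\Delta_M\bar\rho-\frac{v_k'}{v_k}\ge \frac{\sn_k'}{\sn_k}(1-|\nabla r|^2)$, the Laplacian error is not pointwise controlled either. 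As a result, the contribution $\int_A (1-|\nabla r|^2)\,v_k(r)^{-1}\di x$ to the numerator is a priori of the same order as $\mu_k(A)\gtrsim T_j\Theta(r_j)$, i.e.\ of the same order as your lower bound on $\|u_j\|_2^2$, and your claim that the ``pinching integral'' is $o(T_j\Theta(r_j))$ is unjustified. The paper's way around this is precisely the minimality-dependent monotonicity of Proposition \ref{prop_monotonicity}, namely the flux inequality $J(s)\ge\Theta(s)$, flagged in the text as ``crucial for us to conclude.'' It allows the rewriting
\begin{equation*}
\int_a^b JT \;=\; \mu_k(A_{a,b})-\int_a^b J \;\le\; \mu_k(A_{a,b})-\int_a^b\Theta \;=\; \frac{V_k(b)}{v_k(b)}\Theta(b)-\frac{V_k(a)}{v_k(a)}\Theta(a)+\int_a^b\frac{V_kv_k'-v_k^2}{v_k^2}\,\Theta,
\end{equation*}
where $\frac{V_kv_k'-v_k^2}{v_k^2}\equiv -1/m$ for $k=0$ (a beneficial negative correction) and $\to 0$ for $k>0$. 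This structural cancellation, not the curvature pinching, tames the transversality error $1-|\nabla r|^2$.

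A second gap is specific to $k=0$. With your cutoff having bounded derivatives on the unit annulus $[r_j+T_j,\,r_j+T_j+1]$, the outer cutoff error is $\lesssim\mu_0(A_{r_j+T_j,\,r_j+T_j+1})$, which by the identity above (with $V_0/v_0=s/m$) contains the term $\frac{r_j+T_j}{m}\big[\Theta(r_j+T_j+1)-\Theta(r_j+T_j)\big]$, possibly comparable to $(r_j+T_j)\Theta(r_j)$ and hence not dominated by $T_j\Theta(r_j)$. The paper compensates by taking the outer transition zone of growing width $S-s$ (the choice is $S-s=\sqrt{s}$ for $k=0$) with $|\eta'|+|\eta''|\lesssim (S-s)^{-1}$, which produces the vital factor $(S-s)^{-2}$ killing the linear growth of $V_0/v_0$.

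A minor further point: your real solution $\phi$ of the radial ODE oscillates and has isolated zeros, so the pointwise bound $|\phi|^2\gtrsim v_k^{-1}$ used to get $\|u_j\|_2^2\gtrsim T_j\Theta(r_j)$ fails near them. The paper uses the complex $\psi=e^{i\beta r}/\sqrt{v_k(r)}$ precisely so that $|\psi|^2=v_k^{-1}$ exactly; alternatively a pair of independent real solutions would do.
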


The above theorem is well suited for minimal submanifolds constructed via Geometric Measure Theory since, typically, their existence is guaranteed by controlling the density function $\Theta(r)$. As an important example, Theorem \ref{teo_spectrum} applies to all solutions of Plateau's problem at infinity $M^m \ra \HH^n_k$ constructed in \cite{Anderson}, provided that they are smooth. Indeed, because of their construction, $\Theta(+\infty)<+\infty$ (see \cite{Anderson}, part [A] at p. 485) and they are proper (it can also be deduced as a consequence of $\Theta(+\infty)<+\infty$, see Remark \ref{rem_proper}). By standard regularity theory, smoothness of $M^m$ is automatic if $m \le 6$.

\begin{corollary}\label{cor_plateau}
Let $\Sigma \subset \partial_\infty \HH^n_k$ be a closed, integral $(m-1)$ current in the boundary at infinity of $\HH^n_k$ such that, for some neighbourhood $U\subset \HH^ n_k$ of $\supp(\Sigma)$, $\Sigma$ does not bound in $U$, and let $M^m \hookrightarrow \HH^n_k$ be the solution of Plateau's problem at infinity constructed in \cite{Anderson} for $\Sigma$. If $M$ is smooth, then \eqref{wholespectrum} holds.
\end{corollary}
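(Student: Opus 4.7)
The plan is to verify that Anderson's solutions fit directly into the hypotheses of Theorem \ref{teo_spectrum} with ambient $N = \HH^n_k$, and then invoke the theorem. Since the ambient is the model itself, the radial sectional curvature is identically $-k$, so the function $G$ in Definition \ref{def_closeHn} may be taken to be $G(s) \equiv k$, and both conditions $G(s)-k \equiv 0 \in L^1(+\infty)$ and $G(s)-k \to 0$ hold trivially; thus $N$ has (even pointwise) integral pinching to $\HH^n_k$.

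The remaining hypotheses of Theorem \ref{teo_spectrum} are: (a) $\varphi:M \ra \HH^n_k$ is a minimal proper immersion, and (b) the density condition
\[
\liminf_{s\to+\infty} \frac{\log \Theta(s)}{s} = 0.
\]
For (a), Anderson's solution is by construction an absolutely area-minimizing integral current in $\HH^n_k$ with prescribed boundary at infinity $\Sigma$. Under the smoothness assumption in the corollary, its support is a smooth minimal submanifold, so minimality is immediate from the variational characterization. Properness will follow either from Anderson's construction directly or, a posteriori, from the finite density bound recalled next, via Remark \ref{rem_proper} cited in the paragraph preceding the corollary.

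For (b), I would invoke the uniform mass bound obtained by Anderson (part [A] at p.\ 485 of \cite{Anderson}), which says that for his minimizer $M$,
\[
\vol(M \cap B_r) \le C\, V_k(r) \qquad \text{for all } r>0,
\]
i.e.\ $\Theta(+\infty) < +\infty$. In particular $\log\Theta(s)/s \to 0$ as $s\to +\infty$, which is stronger than the $\liminf$ condition \eqref{bellissima} in the hyperbolic case. With (a) and (b) established, Theorem \ref{teo_spectrum} delivers \eqref{wholespectrum}.

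The only potentially subtle point is the matching of notations: Anderson's density estimate is phrased in terms of mass of currents in extrinsic geodesic balls of $\HH^n_k$ centered at a reference point, while our $\Theta(r)$ is defined via extrinsic balls at the pole $\bar o$ (as recalled just after Definition \ref{def_closeHn}); since $\HH^n_k$ is homogeneous one may take the pole to coincide with Anderson's reference point, so the two quantities agree up to the normalization by $V_k(r)$ and the constant $C$ above. This is a bookkeeping check rather than a real obstacle; the substantive content is already carried by Theorem \ref{teo_spectrum}.
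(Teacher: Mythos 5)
Your argument is correct and matches the paper's reasoning essentially verbatim: the ambient $\HH^n_k$ trivially satisfies the integral pinching to itself, minimality and properness follow from Anderson's construction (with properness alternatively deducible from finite density via Remark~\ref{rem_proper}), the uniform mass bound in part~[A], p.~485 of \cite{Anderson} gives $\Theta(+\infty)<+\infty$, and this makes the hyperbolic $\liminf$ condition in \eqref{bellissima} trivial, so Theorem~\ref{teo_spectrum} applies. No gap; the bookkeeping remark at the end is fine and implicit in the paper.
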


An interesting fact of Corollary \ref{cor_plateau} is that $M$ is \emph{not} required to be regular up to $\partial_\infty \HH^n_k$, in particular it might have infinite total curvature. In this respect, we observe that if $M$ be $C^2$ up to $\partial_\infty \HH^n_k$, then $M$ would have finite total curvature (Lemma \ref{prop_mazet} in Appendix 1). By deep regularity results, this is the case if, for instance, $M^m \ra \HH^{m+1}_k$ is a smooth hypersurface that solves Plateau's problem for $\Sigma$, and $\Sigma$ is a $C^{2,\alpha}$ (for $\alpha>0$), embedded compact hypersurface of $\partial_\infty \HH^n_k$. See Appendix 1 for details.\par
The spectrum of solutions of Plateau's problems has also been considered in \cite{PLM} for minimal surfaces in $\R^3$. In this respect, it is interesting to compare Corollary \ref{cor_plateau} with $(3)$ of Corollary 2.6 therein.

\begin{remark}
\emph{The solution $M$ of Plateau's problem in \cite{Anderson} is constructed as a weak limit of a sequence $M_j$ of minimizing currents for suitable boundaries $\Sigma_j$ converging to $\Sigma$. and property $\Theta(+\infty)<+\infty$ is a consequence of a uniform upper bound for the mass of a sequence $M_j$ (part [A], p. 485 in \cite{Anderson}). Such a bound is achieved because of the way the boundaries $\Sigma_j$ are constructed, in particular, since they are all sections of the same cone. One might wonder whether $\Theta(+\infty)<+\infty$, or at least the subexponential growth in \eqref{bellissima}, is satisfied by all solutions of Plateau's problem. In this respect, we just make this simple observation: in the hypersurface case $n=m+1$, if $M \cap B^{m+1}_r$ is volume-minimizing then clearly 
$$
\Theta(r) = \frac{\vol(M \cap B^{m+1}_r)}{V_k(r)} \le \frac{\vol( \partial B_r^{m+1} \subset \HH_k^{m+1})}{V_k(r)} = c_k \frac{\sinh^m(\sqrt{k}r)}{V_k(r)},
$$
but this last expression diverges exponentially fast as $r \ra +\infty$ (differently from its Euclidean analogous, which is finite). This might suggest that a general solution of Plateau's problem does not automatically satisfies $\Theta(+\infty)<+\infty$, and maybe not even \eqref{bellissima}.
}
\end{remark}

In our second result we focus on the particular case when $\Theta(+\infty) <+\infty$, and we give a sufficient condition for its validity in terms of the decay of the second fundamental form. Towards this aim, we shall restrict to ambient spaces with an integral pinching.

\begin{theorem}\label{teo_finitedens}
Let $\varphi : M^m \ra N^n$ be a minimal immersion, and suppose that $N$ has an integral pinching to a space form. Denote with $\rho(x)$ the intrinsic distance from some reference origin $o \in M$. Assume that there exist $c>0$ and $\alpha>1$ such that the second fundamental form satisfies, for $\rho(x) >>1$,
\begin{equation}\label{approaching_hyp}
\begin{array}{ll}
\disp |\II(x)|^2 \le \frac{c}{\rho(x) \log^{\alpha}\rho(x)} & \qquad \text{if $N$ is pinched to $\HH^n_k$;} \\[0.4cm]
\disp |\II(x)|^2 \le \frac{c}{\rho(x)^2 \log^{\alpha}\rho(x)} & \qquad \text{if $N$ is pinched to $\R^n$.}
\end{array}
\end{equation}
Then, $\varphi$ is proper, $M$ is diffeomorphic to the interior of a compact manifold with boundary, and $\Theta(+\infty)<+\infty$.
\end{theorem}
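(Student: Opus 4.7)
The plan is to derive a monotonicity-type estimate for $\Theta(r)$ in the integral-pinching setting and then use the logarithmic $|\II|$-decay to control the resulting error. First, using that $\varphi$ is minimal together with the Hessian comparison from the upper bound in \eqref{pinchsectio}, one deduces that for a suitable radial model function $f_k$ (with $f_0(t) = t^2/2$ in the Euclidean case and $f_k(t) = (\cosh(\sqrt{k}\,t)-1)/k$ in the hyperbolic case),
\begin{equation*}
\Delta^M f_k(\bar\rho) \;\ge\; m\,h_k(\bar\rho)\,|\nabla^M\bar\rho|^2 - \Phi(\bar\rho),
\end{equation*}
for a positive model factor $h_k$, where the error $\Phi$ is controlled pointwise by $G(\bar\rho)-k$ (respectively $\bar\rho\,G(\bar\rho)$), hence integrable on $(1,+\infty)$ by the integral pinching hypothesis.

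Integrating this inequality over the extrinsic annulus $M \cap (B_r \setminus B_{r_0})$, applying the divergence theorem and rewriting the boundary term via coarea, and then normalising by $V_k(r)$ to form $\Theta$, I would obtain a differential inequality
\begin{equation*}
\bigl(\log \Theta(r)\bigr)' \;\ge\; -\psi(r) \qquad \text{for } r \gg 1,
\end{equation*}
in which $\psi$ absorbs both the curvature deviation $\Phi$ and an error $E_\II(r)$ arising from the identity $|\nabla^M\bar\rho|^2 = 1 - |(\bar\nabla\bar\rho)^\perp|^2$, which fails to equal $1$ on $M$. The hypothesis on $|\II|$ enters here through a Riccati-type comparison along the normal flow: the defect $1-|\nabla^M\bar\rho|^2$ is dominated by an integral of $|\II|^2$ along paths in $M$ going to infinity, and under \eqref{approaching_hyp} this renders $\psi(r)$ summable at $+\infty$, roughly at rate $c/(r\log^{\alpha}r)$ (respectively $c/(r^2\log^{\alpha}r)$ after changing variables between intrinsic and extrinsic radii), which is integrable precisely when $\alpha > 1$. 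Integrating from a large $r_0$ then yields $\Theta(+\infty) < +\infty$.

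For properness and topology, the same control of $1 - |\nabla^M\bar\rho|^2$ shows that $|\nabla^M\bar\rho|$ stays bounded away from $0$ outside a large compact set, so $\bar\rho\circ\varphi$ is proper and has no critical points at infinity; properness of $\varphi$ follows, and classical Morse theory applied to a smoothing of $\bar\rho\circ\varphi$, combined with the density bound to rule out proliferation of ends, identifies $M$ with the interior of a compact manifold with boundary. The main technical obstacle, in my view, is to track the $\II$-contribution sharply enough in the integration by parts so that, under only the logarithmic decay in \eqref{approaching_hyp}, the resulting error is actually $L^1$ and not merely $o(1)$: it is precisely this borderline integrability that forces the strict requirement $\alpha>1$ in the hypothesis.
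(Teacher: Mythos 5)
Your overall toolkit matches the paper's: establish properness and the absence of critical points of $r$ at infinity from the second-fundamental-form decay, run a Riccati-type comparison along flow lines of $\nabla r /|\nabla r|^2$ to control $1-|\nabla r|^2$, and feed that into a differential inequality governing the density. However, there is a genuine sign/direction error at the crucial step. You derive $\Delta^M f_k(\bar\rho)\ge m\,h_k(\bar\rho)|\nabla^M\bar\rho|^2-\Phi(\bar\rho)$ from the \emph{upper} curvature bound $\bar K_\rad\le -k$ (which yields the \emph{lower} Hessian comparison), and conclude $(\log\Theta)'\ge -\psi$. Integrating a \emph{lower} bound on $(\log\Theta)'$ gives only a lower bound $\log\Theta(r)\ge\log\Theta(r_0)-\int\psi$, which is vacuous here because $\Theta$ is already nondecreasing by the monotonicity formula; it cannot yield $\Theta(+\infty)<+\infty$. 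The upper curvature bound in \eqref{pinchsectio} alone can never control $\Theta$ from above.

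What is actually needed is the \emph{upper} Hessian comparison coming from the lower curvature bound $-G$ (that is, $\Delta r\le \frac{g'(r)}{g(r)}(m-|\nabla r|^2)$), and the integral pinching $G-k\in L^1$ (resp.\ $sG\in L^1$) enters precisely to control the discrepancy $\zeta=g'/g-\sn_k'/\sn_k$. The paper exploits this through the flux $\bar J(s)=v_k(s)^{-1}\int_{\{r\le s\}}\Delta r$ rather than $\Theta$ directly: from the two-sided Hessian comparison one gets the differential inequality \eqref{diff_vol_3} for $\bar J$, whence Proposition~\ref{prop_equivalence} reduces $\Theta(+\infty)<+\infty$ to the integrability condition $\frac{\sn_k'}{\sn_k}\,T\in L^1(\R^+)$ with $T(s)=\frac{\int_{\Gamma_s}|\nabla r|^{-1}}{\int_{\Gamma_s}|\nabla r|}-1$, and then the Riccati bound (your step~(4), which is Lemma~\ref{lem_computations} and Proposition~\ref{prop_inte}) gives the pointwise decay of $1-|\nabla r|^2$ that makes this integral converge, with $\alpha>1$ being exactly the borderline. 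Two further points: the Riccati estimate bounds $\sqrt{1-|\nabla r|^2}$ by an integral of $\sn_k|\II|$ (so $1-|\nabla r|^2$ by the \emph{square} of that integral, not by $\int|\II|^2$ as you wrote); and properness plus $|\nabla r|>0$ outside a compact set must be secured \emph{first} (this is Lemma~\ref{lem_proper}, which follows from your $|\II|$ decay without any density considerations), since the flow $\Phi$ you integrate along is only defined on $\{r\ge R\}$ after that.
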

The assertions that $\varphi$ be proper and $M$ have finite topology is well-known under assumptions even weaker than \eqref{approaching_hyp} and not necessarily requiring the minimality, see for instance \cite{BJM}, \cite{BC}. Former results are due to \cite{Anderson_prep} ($N= \R^n$) and \cite{filho}, \cite{castillon} ($N=\HH^n_k$). Here, our original contribution is to show that $M$ has finite density. Because of a result in \cite{filho}, \cite{PigolaVeronelli}, if $\varphi : M \ra \HH^n_k$ has finite total curvature then $|II(x)| = o(\rho(x)^{-1})$ as $\rho(x) \ra +\infty$. Hence, \eqref{approaching_hyp} is met and Corollary \ref{cor_densitycurvature} follows at once.\par 
We briefly describe the strategy of the proof of Theorem \ref{teo_spectrum}. In view of \eqref{infspec_intro}, it is enough to show that each $\lambda > (m-1)^2 k/4$ lies in $\sigma(M)$. To this end, we follow an approach inspired by a general result due to K.D. Elworthy and F-Y. Wang \cite{elworthywang}. However, Elworthy-Wang's theorem is not sufficient to conclude, and we need to considerably refine the criterion in order to fit in the present setting. To construct the sequence as in Lemma \ref{lem_weyl}, a key step is to couple the volume growth requirement \eqref{bellissima} with a sharpened form of the monotonicity formula for minimal submanifolds, which improves on the classical ones in \cite{simon}, \cite{Anderson}. Indeed, in Proposition \ref{prop_monotonicity} we describe three monotone quantities other than $\Theta(s)$, that might be useful beyond the purpose of the present paper. For example, in the very recent \cite{GimenoMarkvosen} the authors discovered and used some of the relations in Proposition \ref{prop_monotonicity} to show interesting comparison results for the capacity and the first eigenvalue of minimal submanifolds.  
\subsection{Finite density and finite total curvature in $\R^n$ and $\HH^n$}
The first attempt to extend the classical theory of finite total curvature surfaces in $\R^n$ (see \cite{Osserman_book}, \cite{JorgeMeeks}, \cite{ChernOsserman_1}, \cite{ChernOsserman_2}) to the higher-dimensional case is due to M.T. Anderson. In \cite{Anderson_prep}, the author drew from \eqref{finite_total} a number of topological and geometric consequences, and here we focus on those useful to highlight the relationship between total curvature and density. First, he showed that \eqref{finite_total} implies the decay
\begin{equation}\label{decay_Rn}
\lim_{\rho(x) \ra +\infty} \rho(x)|\II(x)| =0,
\end{equation} 
where $\rho(x)$ is the intrinsic distance from a fixed origin, and as a consequence $M$ is proper, the extrinsic distance function $r$ has no critical points outside some compact set and $|\nabla r| \ra 1$ as $r$ diverges, so by Morse theory $M$ is diffeomorphic to the interior of a compact manifold with boundary. Moreover, he proved that $M$ has finite density  via a higher-dimensional extension of the Chern-Osserman identity \cite{ChernOsserman_1}, \cite{ChernOsserman_2}, namely the following relation linking the Euler characteristic $\chi(M)$ and the Pfaffian form $\Omega$ (\cite{Anderson_prep}, Theorem 4.1):
\begin{equation}\label{chern_oss_Rm}
\chi(M) = \int_M \Omega + \lim_{r \ra +\infty} \frac{\vol(M \cap \partial B_r)}{V_0'(r)}. 
\end{equation}
Observe that, since $|\nabla r| \ra 1$, by coarea's formula the limit in the right hand-side coincides with $\Theta(+\infty)$. We underline that property $\Theta(+\infty)<+\infty$ plays a fundamental role to apply the machinery of manifold convergence to get information on the limit structure of the ends of $M$ (\cite{Anderson_prep}, \cite{ShenZhu}, \cite{Tysk}). For instance, $\Theta(+\infty)$ is related to the number $\mathcal{E}(M)$ of ends of $M$: if we denote with $V_1, \ldots, V_{\mathcal{E}(M)}$ the (finitely many) ends of $M$, \eqref{finite_total} implies for $m \ge 3$ the identities
\begin{equation}\label{numberfinals}
\Theta(+\infty) = \sum_{i=1}^{\mathcal{E}(M)} \lim_{r \ra +\infty} \frac{\vol(V_i \cap \partial B_r)}{V_0'(r)}  \equiv \mathcal{E}(M),
\end{equation}
and thus $M$ is totally geodesic provided that it has only one end and finite total curvature (\cite{Anderson_prep}, Thm 5.1 and its proof). Further information on the mutual relationship between the finiteness of the total curvature and $\Theta(+\infty)<+\infty$ can be deduced under the additional requirement that $M$ is stable or it has finite stability index. For example, by work of J. Tysk \cite{Tysk}, if $M^m$ has finite index and $m \le 6$, then 
\begin{equation}\label{equiindex}
\Theta(+\infty)<+\infty \qquad \text{if and only if} \qquad \int_M |\II|^m <+\infty.
\end{equation}
\begin{remark}
\emph{Indeed, the main result in \cite{Tysk} states that, when $\Theta(+\infty)<+\infty$ and $m \le 6$, $M$ has finite index if and only if it has finite total curvature. However, since the finite total curvature condition alone implies both that $M$ has finite index and $\Theta(+\infty)<+\infty$ (in any dimension\footnote{As said, finite total curvature implies $\Theta(+\infty)<+\infty$ by \eqref{chern_oss_Rm}, while the finiteness of the index can be seen as an application of the generalized Cwikel-Lieb-Rozembljum inequality (see \cite{liyau}) to the stability operator $L = -\Delta -|\II|^2$, recalling that a minimal submanifold $M^m \ra \R^n$ satisfies a Sobolev inequality. We refer to \cite{PRS} for deepening.}), the characterization in \eqref{equiindex} is equivalent to Tysk's theorem. We underline that it is still a deep open problem whether or not, for $m \ge 3$, stability or finite index alone implies the finiteness of the density at infinity.
}
\end{remark}
%Note that, according to a celebrated result of R. Schoen, L. Simon and S.T. Yau \cite{SchoenSimonYau}, this would prove the generalized Bernstein theorem for stable minimal hypersurfaces of dimension $m \le 5$.\footnote{By the generalized Bernstein theorem in $\R^{m+1}$ we mean the statement  that the only stable minimal hypersurfaces of $\R^{m+1}$ are geodesic planes. The statement is false if $m \ge 8$ in view of the celebrated counterexample in \cite{bdgg}, it is true for $m=2$ and still unknown if $3 \le m \le 7$. Note that, in dimension $m =7$, Simons'cone \cite{simons} provides a counterexample to the conjecture if we allow $M$ to be singular, but the Bernstein problem might still be true for smooth minimal hypersurfaces. 
%}
%
Since then, efforts were made to investigate analogous properties for minimal submanifolds of finite total curvature immersed in $\HH^n_k$. There, some aspects show strong analogy with the $\R^n$ case, while others are strikingly different. For instance, minimal immersions $\varphi : M^m \ra \HH^n_k$ with finite total curvature enjoy the same decay property \eqref{decay_Rn} with respect to the intrinsic distance $\rho(x)$ (\cite{filho}, see also \cite{PigolaVeronelli}), which is enough to deduce that they are properly immersed  and diffeomorphic to the interior of a compact manifold with boundary. Moreover, Anderson \cite{Anderson} proved the monotonicity of $\Theta(r)$ in \eqref{def_densityinfty_hyp}. In order to show (among other things) that complete, finite total curvature surfaces $M^2 \hookrightarrow \HH^n$ have finite density, in \cite{Chen}, \cite{ChenCheng} the authors obtained the following Chern-Osserman type inequality:
\begin{equation}\label{chern_osserman_hyp}
\chi(M) \ge - \frac{1}{4\pi} \int_M |\II|^2 + \Theta(+\infty),
\end{equation}
see also \cite{GimenoPalmer_2}. However, in the higher dimensional case we found no analogous of \eqref{chern_oss_Rm}, \eqref{chern_osserman_hyp} in the literature, and adapting the proof of \eqref{chern_oss_Rm} to the hyperbolic ambient space seems to be subtler than what we expected. In fact, an \emph{equality} like \eqref{chern_oss_Rm} is not even possible to obtain, since there exist minimal submanifolds of $\HH^n_k$ with finite density but whose density at infinity depends on the chosen reference origin \cite{Gimeno_private}. We point out that, on the contrary, inequality \eqref{chern_osserman_hyp} holds for each choice of the reference origin in $\R^n$. This motivated the different route that we follow to prove Theorem \ref{teo_finitedens} and Corollary \ref{cor_densitycurvature}.
Among the results in \cite{Anderson_prep} that could not admit a corresponding one in $\HH_k^n$, in view of the solvability of Plateau's problem at infinity on $\HH_k^n$ we stress that a relation like \eqref{numberfinals} cannot hold for each minimal submanifold of $\HH^n_k$ with finite total curvature. Indeed, there exist a wealth of properly immersed minimal submanifolds in $\HH^n_k$ with finite total curvature and one end: for example, referring to the upper half-space model, the graphical solution of Plateau's problem for $\Sigma^{m-1} \subset \partial_\infty \HH^n_k$ being the boundary of a convex set (constructed at the end of \cite{Anderson}) has finite total curvature, as follows from Lemma \ref{prop_mazet} and the regularity results recalled in Appendix 1. It shall be observed, however, that when $\II$ decays sufficiently fast at infinity with respect to the extrinsic distance function $r(x)$: 
\begin{equation}\label{decay_Hn}
\lim_{r(x) \ra +\infty} e^{2 \sqrt{k}r(x)}|\II(x)| =0,
\end{equation} 
then the inequality $\Theta(+\infty) \le \mathcal{E}(M)$ still holds for minimal hypersurfaces in $\HH^n_k$ as shown in \cite{GimPal}, and in particular $M$ is totally geodesic provided that it has only one end, as first observed in \cite{KasueSugahara}, \cite{KasueSugahara_2}. We remark that there exists an infinite family of complete minimal cylinders $\varphi_\lambda : \Sph^1 \times \R \ra \HH^3$ whose second fundamental form $\II_\lambda$ decays exactly of order $\exp\{-2r(x)\}$, see \cite{Mori}.

\section{Preliminaries}
Let $\varphi : (M^m, \metric) \rightarrow (N^n, \metricN)$ be an isometric immersion of a complete $m$-dimensional Riemannian manifold $M$ into an ambient manifold $N$ of dimension $n$ and possessing a pole $\bar o$. We denote with $\nabla, \Hess, \Delta$ the connection, the Riemannian Hessian and the Laplace-Beltrami operator on $M$, while quantities related to $N$ will be marked with a bar. For instance, $\bar\nabla, \overline{\dist}, \overline{\Hess}$ will identify the connection, the distance function and the Hessian in $N$. Let $\bar \rho(x)= \overline{\dist}(x,\bar o)$ be the distance function from $\bar o$. Geodesic balls in $N$ of radius $R$ and center $y$ will be denoted with $B_R^N(y)$. Moreover, set  
\begin{equation}\label{def_r}
r \ : \ M \ra \R, \qquad r(x) = \bar\rho\big(\varphi(x)\big),
\end{equation}
for the extrinsic distance from $\bar o$. We will indicate with $\Gamma_{\! s}$ the extrinsic geodesic spheres restricted to $M$: 
$\Gamma_{\! s} \doteq \{x\in M;\;r(x)=s\}$. Fix a base point $o \in M$. In what follows, we shall also consider the intrinsic distance function $\rho(x) = \dist(x,o)$ from a reference origin $o \in M$. 

\subsection{Target spaces}
Hereafter, we consider an ambient space $N$ possessing a pole $\bar o$ and, setting $\bar \rho(x) \doteq \dist(x, \bar o)$, we assume that \eqref{pinchsectio} is met for some $k \ge 0$ and some $G \in C^0(\R^+_0)$. Let $\sn_k(t)$ be the solution of
\begin{equation}
\left\{\begin{array}{l}
\sn_k'' - k\, \sn_k = 0 \quad \text{on } \, \R^+, \\[0.1cm]
\sn_k(0)=0, \quad \sn_k'(0)=1, 
\end{array}\right.
\end{equation}
that is
\begin{equation}\label{def_snk}
\sn_k(t) = \left\{ \begin{array}{ll} t & \quad \text{if } \, k=0, \\[0.1cm]
 \sinh(\sqrt{k}t)/\sqrt{k} & \quad \text{if } \, k>0. 
\end{array}\right.
\end{equation}
Observe that $\R^n$ and $\HH^n_k$ can be written as the differentiable manifold $\R^n$ equipped with the metric given, in polar geodesic coordinates $(\rho, \theta) \in \R^+ \times \Sph^{n-1}$ centered at some origin, by 
$$
\di s^2_k = \di \rho^2 + \sn^2_k(\rho)\,\di \theta^2, 
$$
$\di \theta^2$ being the metric on the unit sphere $\Sph^{n-1}$.\\
We also consider the model $M^n_g$ associated with the lower bound $-G$ for $\bar K_\rad$, that is, we let $g \in C^2(\R^+_0)$ be the solution of  
\begin{equation}\label{def_g}
\left\{\begin{array}{l}
g'' - Gg = 0 \quad \text{on } \, \R^+, \\[0.1cm]
g(0)=0, \quad g'(0)=1, 
\end{array}\right.
\end{equation}
and we define $M^n_g$ as being $(\R^n, \di s^2_g)$ with the $C^2$-metric $\di s_g^2 = \di \rho^2 + g^2(\rho) \di \theta^2$ in polar coordinates. Condition \eqref{pinchsectio} and the Hessian comparison theorem (Theorem 2.3 in \cite{PRS}, or Theorem 1.15 in \cite{BMR_memoirs}) imply
\begin{equation}\label{hessiancomp}
\frac{\sn_k'(\bar\rho)}{\sn_k(\bar\rho)} \Big( \metricN - \di \bar \rho \otimes\di \bar\rho\Big) \le  \overline{\Hess}(\bar\rho) \le \frac{g'(\bar \rho)}{g(\bar \rho)}\Big( \metricN - \di \bar \rho \otimes\di \bar\rho\Big).
\end{equation}
The next proposition investigates the ODE properties that follow from the assumptions of pointwise or integral pinching.
\begin{proposition}\label{prop_pinching}
Let $N^n$ satisfy \eqref{pinchsectio}, and let $\sn_k,g$ be solutions of \eqref{def_snk}, \eqref{def_g}. Define 
\begin{equation}\label{rel_useful}
\zeta(s) \doteq \frac{g'(s)}{g(s)} - \frac{\sn_k'(s)}{\sn_k(s)}.
\end{equation}
Then, $\zeta(0^+)=0$, $\zeta \ge 0$ on $\R^+$. Moreover, 
\begin{itemize}
\item[$(i)$] If $N$ has a pointwise pinching to $\HH^n_k$ or $\R^n$, then $\zeta(s) \ra 0$ as $s \ra +\infty$.
%$$
%\zeta(s) \frac{\sn_k(s)}{\sn_k'(s)} \ra 0 \qquad \text{as } \, s \ra +\infty.
%$$
\item[$(ii)$] If $N$ has an integral pinching to $\HH^n_k$ or $\R^n$, then $g/\sn_k \ra C$ as $s \ra +\infty$ for some $C \in \R^+$, and 
\begin{equation}\label{intebounds_zeta}
\zeta(s) \in L^1(\R^+), \qquad \zeta(s) \frac{\sn_k(s)}{\sn_k'(s)} \ra 0 \ \text{ as } \, s \ra +\infty.
\end{equation}
\end{itemize}
\end{proposition}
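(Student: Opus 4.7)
The plan is to use the pair of Riccati equations. Setting $u = g'/g$ and $v = \sn_k'/\sn_k$, differentiating \eqref{def_snk}--\eqref{def_g} gives $u' = G - u^2$ and $v' = k - v^2$, so $\zeta = u - v$ satisfies
\[
\zeta' + (u+v)\,\zeta \;=\; G - k.
\]
The natural integrating factor is $g\sn_k$, since $(g\sn_k)' = g\sn_k(u+v)$; multiplying through and integrating yields
\[
g(s)\sn_k(s)\,\zeta(s) \;=\; \int_0^s g(t)\sn_k(t)\,\bigl(G(t)-k\bigr)\,\di t.
\]
The boundary term at $0$ vanishes by the Taylor expansions $g(s),\sn_k(s) = s + O(s^3)$, which also give $\zeta(0^+) = 0$. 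Non-negativity of $G - k$ from \eqref{pinchsectio} then forces $\zeta \ge 0$ on $\R^+$.

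For $(i)$, the inequality $u+v \ge 2v$ stays bounded below by $2\sqrt{k}$ (resp.\ $2/s$) in the hyperbolic (resp.\ Euclidean) case, so $g\sn_k \to +\infty$. If the integral in the numerator stays bounded, then $\zeta \to 0$ trivially; otherwise L'H\^opital applies and gives
\[
\lim_{s\to\infty}\zeta(s) \;=\; \lim_{s\to\infty}\frac{G(s)-k}{(u+v)(s)}.
\]
For $k>0$ this is $0$ by the pointwise pinching together with $u+v \ge 2\sqrt{k}$; for $k=0$ one multiplies numerator and denominator by $s$ and invokes $s(u+v) \ge 2$ together with $sG \to 0$.

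For $(ii)$, dropping the non-negative term $(u-v)\zeta = \zeta^2$ from the ODE yields the cleaner inequality $\zeta' + 2v\,\zeta \le G-k$, or equivalently $(\sn_k^2\zeta)' \le \sn_k^2(G-k)$. Integrating from $0$ and applying Fubini,
\[
\int_0^\infty \zeta(s)\,\di s \;\le\; \int_0^\infty \sn_k^2(t)\bigl(G(t)-k\bigr)\left[\int_t^\infty \frac{\di s}{\sn_k^2(s)}\right]\di t.
\]
A direct computation shows that the bracket times $\sn_k^2(t)$ equals $t$ when $k=0$ and $(1-e^{-2\sqrt{k}t})/(2\sqrt{k}) \le 1/(2\sqrt{k})$ when $k>0$, so integral pinching makes the right-hand side finite and $\zeta \in L^1(\R^+)$. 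Since $\bigl(\log(g/\sn_k)\bigr)' = \zeta$ and $(g/\sn_k)(0^+) = 1$, one obtains $g/\sn_k \to C := \exp\|\zeta\|_{L^1} \in [1,+\infty)$.

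The last assertion $\zeta\,\sn_k/\sn_k' \to 0$ is the subtlest part. For $k=0$ it reads $s\zeta \to 0$, which I would derive from the pointwise bound $s^2\zeta \le \int_0^s t^2 G$ (coming from $(s^2\zeta)' \le s^2 G$, valid since $s(u+v) \ge 2$) combined with the identity
\[
\frac{1}{s}\int_0^s t^2\,G(t)\,\di t \;=\; F(s) - \frac{1}{s}\int_0^s F(t)\,\di t, \qquad F(s) = \int_0^s tG(t)\,\di t,
\]
which tends to $0$ by Ces\`aro since $F$ has finite limit $\|sG\|_{L^1}$. For $k>0$, $\sn_k/\sn_k' \to 1/\sqrt{k}$, so the target becomes $\zeta \to 0$; from $u+v \ge 2\sqrt{k}$ one obtains the sharper bound
\[
\zeta(s) \;\le\; \int_0^s e^{-2\sqrt{k}(s-t)}\bigl(G(t)-k\bigr)\,\di t,
\]
and viewing the right-hand side as a convolution of $G-k \in L^1$ with an exponentially-decaying kernel, dominated convergence delivers $\zeta(s) \to 0$. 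I expect this last DCT step, where $\zeta \in L^1$ alone is not enough to conclude pointwise decay and the convolution structure is essential, to be the main conceptual subtlety.
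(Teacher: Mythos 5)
Your proof is correct and starts from the very same linear ODE $\zeta' + (u+v)\zeta = G-k$ (in the paper's notation $\zeta' = R - \zeta B$ with $B = u+v$), but the estimates that follow take a genuinely different path. For the $L^1$ bound in $(ii)$, the paper keeps the full integrating factor $g\sn_k$, applies Fubini, and then must compare $g\sn_k\int_\cdot^\infty(g\sn_k)^{-1}$ with $\sn_k^2\int_\cdot^\infty\sn_k^{-2}$ via an auxiliary monotonicity lemma (Proposition 3.12 of \cite{BMR_memoirs}); you instead discard the nonnegative term $\zeta^2 = (u-v)\zeta$ to get $\zeta' + 2v\zeta \le G-k$ with the simpler integrating factor $\sn_k^2$, landing on $\sn_k^2\int_\cdot^\infty\sn_k^{-2}$ directly and bypassing that reference --- a genuine shortcut. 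For the final assertion $\zeta\,\sn_k/\sn_k' \to 0$ the routes separate more sharply: the paper argues uniformly in $k$ by a bootstrap (first $\zeta\,\sn_k/\sn_k' \in L^\infty$, hence $B \le C\sn_k'/\sn_k$, hence $\zeta(s) \le C\,\sn_k'(s)/\sn_k(s)\int_s^\infty\zeta$, and finally $\zeta\in L^1$), whereas you split the cases: for $k=0$ an integration by parts plus a Ces\`aro mean argument on $\tfrac1s\int_0^s t^2G$, and for $k>0$ a convolution bound $\zeta(s) \le \int_0^s e^{-2\sqrt{k}(s-t)}(G(t)-k)\,\di t$ followed by dominated convergence. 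Both are valid; yours are more elementary and self-contained step by step, while the paper's bootstrap handles both signs of $k$ in a single stroke. A minor point worth spelling out in the $k=0$ case: continuity of $G$ on $\R^+_0$ guarantees $\int_0^1 tG\,\di t < \infty$, so that $sG\in L^1(+\infty)$ does indeed make $F(\infty)$ finite.
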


\begin{proof}
The non-negativity of $\zeta$, which in particular implies that $g/\sn_k$ is non-decreasing, follows from $G\ge k$ via Sturm comparison, and $\zeta(0^+)=0$ depends on the asymptotic relations $\sn_k'/\sn_k = s^{-1} + o(1)$ and $g'/g = s^{-1} + o(1)$ as $s \ra 0^+$, which directly follow from the ODEs satisfied by $\sn_k$ and $g$. To show $(i)$, differentiating $\zeta$ we get 
\begin{equation}\label{relazeta}
\zeta'(s) = R(s) - \zeta(s) B(s),
\end{equation}
where $R(s) \doteq G(s) - k$ and $\disp B(s) \doteq \frac{g'(s)}{g(s)} + \frac{\sn_k'(s)}{\sn_k(s)}$.
Thus, integrating on $[1,s]$, we can rewrite $\zeta$ as follows:
\begin{equation}\label{rewritezeta}
\zeta(s) = \zeta(1)e^{ - \int_1^sB} + e^{ - \int_1^sB}\int_1^s R(\sigma)e^{\int_1^\sigma B}\di \sigma
\end{equation}
Using that $B \not \in L^1([1,+\infty))$, and applying de l'Hopital's theorem, we infer 
$$
\lim_{s \ra +\infty} \zeta(s) = \lim_{s \ra +\infty}\frac{R(s)}{B(s)} \le \lim_{s \ra +\infty} \frac{\sn_k(s)[G(s)-k]}{\sn_k'(s)}.
$$
In our pointwise pinching assumptions on $G(s)$, for both $k=0$ and $k>0$ the last limit is zero, hence $\zeta(s) \ra 0$ as $s$ diverges. To show $(ii)$, suppose that $N$ has an integral pinching to $\HH^n_k$ or to $\R^n$. We first observe that the boundedness of $g/\sn_k$ on $\R^+$ equivalent to the property $\zeta \in L^1(+\infty)$, as it follows from 
\begin{equation}
\log \frac{g(s)}{\sn_k(s)} = \int_0^{s} \frac{\di}{\di \sigma}\log\left(\frac{g(\sigma)}{\sn_k(\sigma)}\right)\di s = \int_0^{s}\zeta
\end{equation}
(we used that $(g/\sn_k)(0^+) =1$). The boundedness of $g/\sn_k$ is the content of Corollary 4 and Remark 16 in \cite{BMR_Yamabe},
%\footnote{The reader be warned that $k$ and $G$ here are, respectively, $G$ and $\bar G$ in Corollary 4 of \cite{BMR_Yamabe}.}
but we prefer here to present a direct proof. Integrating \eqref{rewritezeta} on $[1,s]$ and using Fubini's theorem, the monotonicity of $g/\sn_k$ and the expression of $B$ we obtain
\begin{equation}\label{equa_bonita}
\begin{array}{lcl}
\disp \int_1^s \zeta & = & \disp \zeta(1) \int_1^s \frac{g(1)\sn_k(1)}{g(\sigma)\sn_k(\sigma)}\,\di \sigma + 
                                    \int_1^s e^{ - \int_1^\sigma B}\int_1^\sigma R(\tau)e^{\int_1^\tau B}\di \tau \, \di \sigma \\[0.5cm]
& \le & \disp \zeta(1)\sn_k(1)^2 \int_1^s \frac{\di \sigma}{\sn_k^2(\sigma)} + \int_1^s \left[\int_\tau^s e^{ - \int_1^\sigma B}R(\tau)e^{\int_1^\tau B}\di \sigma\right] \di \tau \\[0.5cm]
%& \le & \disp C + \int_1^s R(\tau)e^{\int_1^\tau B}\left[\int_\tau^s e^{ - \int_1^\sigma B}\di \sigma\right] \di \tau \\[0.5cm] 
& \le & \disp C + \int_1^s R(\tau)g(\tau)\sn_k(\tau)\left[\int_\tau^s \frac{\di \sigma}{g(\sigma)\sn_k(\sigma)}\right] \di \tau \\[0.5cm] 
& \le & \disp C + \int_1^s R(\tau)g(\tau)\sn_k(\tau)\left[\int_\tau^{+\infty} \frac{\di \sigma}{g(\sigma)\sn_k(\sigma)}\right] \di \tau \\[0.5cm] 
\end{array}
\end{equation}
for some $C>0$, where we have used that $\sn_k^{-2}, g^{-1}\sn_k^{-1} \in L^1(+\infty)$. Next, since $g\,\sn_k/\sn_k^2$ is non-decreasing, Proposition 3.12 in \cite{BMR_memoirs} ensures the validity of the following inequality: 
$$
g(\tau)\sn_k(\tau)\left[\int_\tau^{+\infty} \frac{\di \sigma}{g(\sigma)\sn_k(\sigma)}\right] \le \sn_k^2(\tau)\left[\int_\tau^{+\infty} \frac{\di \sigma}{\sn_k^2(\sigma)}\right].
$$
It is easy to show that the last expression is bounded if $k>0$, and diverges at the order of $\tau$ if $k=0$. In other words, it can be bounded by $C_1\sn_k/\sn_k'$ on $[1,+\infty)$, for some large $C_1>0$. Therefore, by \eqref{equa_bonita}
$$
\disp \int_1^s \zeta \le \disp C + C_1\int_1^s R(\tau)\frac{\sn_k(\tau)}{\sn_k'(\tau)} \di \tau = \disp C + C_1\int_1^s \big[G(\tau)-k\big]\frac{\sn_k(\tau)}{\sn_k'(\tau)} \di \tau.
$$
In our integral pinching assumptions, both for $k=0$ and for $k>0$ it holds $(G-k)\sn_k/\sn_k' \in L^1(+\infty)$, and thus $\zeta \in L^1(+\infty)$. Next, we use \eqref{relazeta} and the non-negativity of $\zeta,B$ to obtain
$$
\begin{array}{lcl}
\disp \left( \frac{\zeta(s)\sn_k(s)}{\sn_k'(s)}\right)' & = & \disp \big[G(s)-k - \zeta(s)B(s)\big]\frac{\sn_k(s)}{\sn_k'(s)} + \zeta(s)\left[ 1 - k \left(\frac{\sn_k(s)}{\sn_k'(s)}\right)^2\right] \\[0.5cm]
& \le & \disp \frac{\big[G(s)-k\big]\sn_k(s)}{\sn_k'(s)} + \zeta(s) \in L^1(+\infty), 
\end{array}
$$
hence $\zeta \sn_k/\sn_k' \in L^\infty(\R^+)$ by integrating. This implies that the function $B$ in \eqref{relazeta} satisfies $B \le C\sn_k'/\sn_k$ for some constant $C>0$. Therefore, from \eqref{relazeta} we get $\zeta' \ge -\zeta B \ge - C \zeta \sn_k'/\sn_k$. Integrating on $[s,t]$ and using the monotonicity of $\sn_k'/\sn_k$ we obtain
$$
- C\frac{\sn_k'(s)}{\sn_k(s)}\int_s^t \zeta \le \zeta(t) -\zeta(s).
$$
Since $\zeta \in L^1(\R^+)$, we can choose a divergent sequence $\{t_j\}$ such that $\zeta(t_j) \ra 0$ as $j \ra +\infty$. Setting $t=t_j$ into the above inequality and taking limits we deduce
$$
\zeta(s) \le C\frac{\sn_k'(s)}{\sn_k(s)}\int_s^{+\infty} \zeta, 
$$
thus letting $s \ra +\infty$ we get the second relation in \eqref{intebounds_zeta}.
\end{proof}

\subsection{A transversality lemma}
This subsection is devoted to an estimate of the measure of the critical set 
$$
S_{t,s} = \Big\{ x \in M \ : \ t \le r(x) \le s, \ |\nabla r(x)|=0 \Big\},
$$
with the purpose of justifying some coarea's formulas for integrals over extrinsic annuli. We begin with the next
\begin{lemma}\label{lem_coarea}
Let $\varphi : M^m \ra N^n$ be an isometric immersion, and let $r(x) = \overline{\dist}(\varphi(x), \bar o)$ be the extrinsic distance function from $\bar o \in N$. Denote with $\Gamma_{\!\sigma} \doteq \{x\in M;\; r(x)=\sigma\}$. Then, for each $f \in L^1(\{t \le r \le s\})$, 
\begin{equation}\label{coarea_strong}
\int_{\{t \le r \le s\}} f \,\di x = \int_{S_{t,s}}\!f \,\di x + \int_t^s \left[ \int_{\Gamma_{\!\sigma}} \frac{f}{|\nabla r|}\right] \di \sigma.
\end{equation}
In particular, if 
\begin{equation}\label{assu_Hm}
\vol(S_{t,s})=0, 
\end{equation}
then 
\begin{equation}\label{coarea_strong2}
\int_{\{t \le r \le s\}} f \,\di x = \int_t^s \left[ \int_{\Gamma_{\!\sigma}} \frac{f}{|\nabla r|}\right] \di \sigma.
\end{equation}
\end{lemma}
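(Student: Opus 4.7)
Since $\bar\rho$ is $1$-Lipschitz on $N$ and $\varphi$ is an isometric immersion, the extrinsic distance $r=\bar\rho\circ\varphi$ is $1$-Lipschitz on $M$, and $|\nabla r|\le 1$ wherever defined. Because $\bar o$ is a pole of $N$, $\bar\rho$ is smooth on $N\setminus\{\bar o\}$, and so $r$ is smooth on the open set $M\setminus\varphi^{-1}(\bar o)$; the exceptional set $\varphi^{-1}(\bar o)$ has $m$-dimensional volume zero. Consequently $S_{t,s}$ is a well-defined Borel subset of the annulus $\{t\le r\le s\}$.

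My plan is to split the annulus as $\{t\le r\le s\}=S_{t,s}\sqcup\bigl(\{t\le r\le s\}\setminus S_{t,s}\bigr)$ and invoke the coarea formula for Lipschitz functions on each piece. On $S_{t,s}$ there is nothing to rewrite: the contribution is simply $\int_{S_{t,s}}f\,\di x$. On the complement $|\nabla r|>0$, so the standard coarea formula gives
$$\int_{\{t\le r\le s\}\setminus S_{t,s}} f\,\di x \;=\; \int_t^s\left[\int_{\Gamma_{\!\sigma}\setminus S_{t,s}} \frac{f}{|\nabla r|}\right]\di\sigma.$$
To recover exactly the expression $\int_t^s\!\int_{\Gamma_{\!\sigma}} f/|\nabla r|\,\di\sigma$ claimed in \eqref{coarea_strong}, I would invoke Sard's theorem applied to the smooth function $r|_{M\setminus\varphi^{-1}(\bar o)}$: the set of critical values has Lebesgue measure zero in $\R$, and hence $\Gamma_{\!\sigma}\cap S_{t,s}=\emptyset$ for a.e.\ $\sigma\in[t,s]$. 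Summing the two pieces produces \eqref{coarea_strong}, and \eqref{coarea_strong2} is then immediate under the additional hypothesis $\vol(S_{t,s})=0$.

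The only point requiring genuine care is the applicability of Sard's theorem, which needs smoothness; this is why $\varphi^{-1}(\bar o)$ must be isolated at the outset. Since that preimage is contained in the single level $\{r=0\}$, it either lies outside the annulus (when $0\notin[t,s]$) or contributes to a single level of zero $1$-dimensional measure in the outer coarea integral, so it can be safely absorbed into the exceptional set of values. Measurability of $S_{t,s}$ and integrability of $f/|\nabla r|$ on almost every level set are standard byproducts of the Lipschitz coarea formula and need no separate argument.
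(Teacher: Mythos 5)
Your proposal is correct and follows essentially the same route as the paper's proof: split the annulus into $S_{t,s}$ and its complement, apply the coarea formula for the Lipschitz function $r$ on the complement where $|\nabla r|>0$, and invoke Sard's theorem to drop $S_{t,s}$ from $\Gamma_{\!\sigma}$ for a.e.\ $\sigma$. The only cosmetic difference is that the paper first reduces to $f\ge 0$ and truncates to $A_j=\{|\nabla r|>1/j\}$ before passing to the limit by monotone convergence, so as to apply coarea to an integrand that is manifestly $L^1$ --- a technicality you elide by quoting the Federer--Lipschitz coarea formula for nonnegative measurable functions directly.
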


\begin{proof}
We prove \eqref{coarea_strong} for $f \ge 0$, and the general case follows by considering the positive and negative part of $f$. By the coarea's formula, we know that for each $g \in L^1(\{t \le r \le s\})$,
\begin{equation}\label{classical_coarea}
\int_{\{t \le r \le s\}} g|\nabla r| \,\di x = \int_t^s \left[ \int_{\Gamma_{\!\sigma}} g\right] \di \sigma.
\end{equation}
Fix $j$ and consider $A_j = \{ |\nabla r|> 1/j\}$ and the function $$g = f1_{A_j}/|\nabla r| \in L^1(\{t \le r \le s\}).$$ Applying \eqref{classical_coarea}, letting $j \ra +\infty$ and using the monotone convergence theorem we deduce
\begin{equation}\label{limit}
\int_{\{t \le r \le s\} \backslash S_{t,s}} f \,\di x = \int_t^s \left[ \int_{\Gamma_{\!\sigma} \backslash S_{t,s}} \frac{f}{|\nabla r|}\right] \di \sigma = \int_t^s \left[ \int_{\Gamma_{\!\sigma}} \frac{f}{|\nabla r|}\right] \di \sigma, 
\end{equation}
where the last equality follows since $\Gamma_\sigma \cap S_{t,s} = \emptyset$ for a.e. $\sigma \in [t,s]$, in view of Sard's theorem. Formula \eqref{coarea_strong} follows at once.

%However, if $\{\Omega_j\}$ is an increasing exhaustion of $\{t < r < s\}$ by compact subsets, again by \eqref{classical_coarea} with $g = 1_{S_{t,s}}1_{\Omega_j}$ and letting $j \ra +\infty$, 
%$$
%0 = \int_{\{t \le r \le s\}}1_{S_{t,s}}|\nabla r| \,\di x= \int_t^s \haus^{m-1}(\Gamma_{\!\sigma} \cap S_{t,s}) \di \sigma,
%$$
%where $\haus^{m-1}$ is the $(m-1)$-dimensional Hausdorff measure. Therefore, 
%$$\haus^{m-1}(\Gamma_{\!\sigma} \cap S_{t,s}) = 0$$ for a.e. $\sigma \in [t,s]$, which implies that \eqref{limit} becomes
%\begin{equation}\label{limit}
%\int_{\{t \le r \le s\} \backslash S_{t,s}} f \,\di x = \int_t^s \left[ \int_{\Gamma_{\!\sigma}} \frac{f}{|\nabla r|}\right] \di \sigma,
%\end{equation}
%from which \eqref{coarea_strong} immediately follows.
\end{proof}

Let now $N$ possess a pole $\bar o$ and satisfy \eqref{pinchsectio}, and consider a minimal immersion $\varphi: M\ra N$. Since, by the Hessian comparison theorem, geodesic spheres in $N$ centered at $\bar o$ are positively curved, it is reasonable to expect that the ``transversality" condition \eqref{assu_Hm} holds. This is the content of the next 

\begin{proposition}\label{important!}
Let $\varphi: M^{m} \ra N^n$ be a minimal immersion, where $N$ possesses a pole $\bar o$ and satisfies \eqref{pinchsectio}.
Then, 
\begin{equation}
\vol(S_{0,+\infty}) = 0.
\end{equation}

\end{proposition}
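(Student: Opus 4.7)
The plan is to derive a contradiction by showing that at any point of the critical set $S_{0,+\infty}$ with $r > 0$ one has $\Delta r > 0$, while a Stampacchia-type argument forces $\Delta r = 0$ almost everywhere on $\{|\nabla r| = 0\}$. First, observe that $\{r = 0\} = \varphi^{-1}(\bar o)$ is a discrete subset of $M$ (since $\varphi$ is an immersion and $\bar o$ has a neighborhood in $N$ on which $\varphi$ is locally injective), hence has zero volume. On the complement, $\bar\rho$ is smooth because $\bar o$ is a pole of $N$, so $r = \bar\rho\circ\varphi$ is smooth. It therefore suffices to show that
$$
C \doteq \big\{ x \in M \ : \ r(x) > 0, \ |\nabla r(x)| = 0 \big\}
$$
has vanishing volume.

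Second, at every $x \in C$ the tangential part of $\bar\nabla\bar\rho$ along $M$ is exactly $\nabla r$, and hence vanishes; so $\bar\nabla\bar\rho$ is orthogonal to $d\varphi(T_xM)$. The composition formula for the Laplacian of $\bar\rho\circ\varphi$, together with the minimality of $\varphi$, yields
$$
\Delta r(x) \ = \ \sum_{i=1}^m \overline{\Hess}\,\bar\rho(e_i,e_i) + \langle \bar\nabla\bar\rho, H\rangle \ = \ \sum_{i=1}^m \overline{\Hess}\,\bar\rho(e_i,e_i),
$$
for any orthonormal basis $\{e_i\}$ of $T_xM$, since the mean curvature vector $H$ vanishes. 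Because $\langle e_i,\bar\nabla\bar\rho\rangle = 0$ for every $i$, the Hessian comparison bound \eqref{hessiancomp} gives $\overline{\Hess}\,\bar\rho(e_i,e_i) \ge \sn_k'(r(x))/\sn_k(r(x))$, whence
$$
\Delta r(x) \ \ge \ m\,\frac{\sn_k'(r(x))}{\sn_k(r(x))} \ > \ 0 \qquad \text{for every } x \in C.
$$

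Third, I invoke the classical Stampacchia lemma applied componentwise: if $u \in C^1$ on a smooth manifold, then $\nabla u$ vanishes almost everywhere on $\{u = 0\}$. Working in local coordinates on the open set $\{r > 0\}$ where $r$ is smooth, applying this to each partial derivative $\partial_i r$ (which is $C^1$ there), we conclude that $\Hess r = 0$, and in particular $\Delta r = 0$, almost everywhere on $\{|\nabla r| = 0\}\cap\{r>0\} = C$. Combined with the strictly positive pointwise lower bound of the previous paragraph, this forces $\vol(C) = 0$, completing the proof. The main subtlety is precisely this last step: the pointwise positivity of $\Delta r$ at \emph{every} point of $C$ has to be played against the a.e. vanishing coming from Stampacchia, so one must make sure that the standard Stampacchia argument genuinely applies to $r$ on the open set $\{r>0\}$, where its smoothness is guaranteed by the hypothesis that $\bar o$ is a pole.
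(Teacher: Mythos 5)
Your proof is correct, and its overall logical skeleton coincides with the paper's: on the critical set one has $\Delta r > 0$ pointwise (from minimality and the Hessian comparison), while some Stampacchia-type argument forces $\Delta r = 0$ almost everywhere there, so the critical set must have measure zero. Where the two proofs genuinely diverge is in the mechanism that produces the almost-everywhere vanishing. The paper applies Stampacchia together with Rademacher's theorem to the merely Lipschitz function $|\nabla r|$, obtaining $\nabla|\nabla r| = 0$ a.e.\ on $S_{0,+\infty}$, and then runs a second-order Taylor expansion along geodesics of the quantity $u(t) = 1 - \langle \bar\nabla\bar\rho, e_n\rangle^2$ to extract the identity $\bar\nabla_{e_i}\bar\nabla\bar\rho = \bar\nabla_{e_i}e_n$, which after tracing with minimality yields $0 = \sum_i \overline{\Hess}\,\bar\rho(e_i,e_i)$ and the contradiction. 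You instead apply Stampacchia directly to each (smooth) partial derivative $\partial_i r$ in a chart on $\{r>0\}$, intersect over the finitely many $i$ to get $\partial_j\partial_i r = 0$ a.e.\ on $C$, and observe that the Christoffel corrections in $\Hess r$ are killed by $\partial_k r = 0$ on $C$, so $\Hess r = 0$ and $\Delta r = 0$ a.e.\ there. This is shorter and more transparent: it avoids Rademacher (since $\partial_i r$ is actually $C^\infty$ on $\{r>0\}$, not merely Lipschitz), it avoids any computation involving the second fundamental form, and it sidesteps the slightly delicate step in the paper where a $o(t^2)$ estimate is asserted for $u(t)$. Both approaches need $\bar o$ to be a pole so that $r$ is smooth away from $\varphi^{-1}(\bar o)$, and your preliminary observation that $\varphi^{-1}(\bar o)$ is discrete (hence null) correctly disposes of the $\{r=0\}$ piece.
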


\begin{proof}
Suppose by contradiction that $\vol(S_{0,+\infty})>0$. By Stampacchia and Rademacher's theorems, 
\begin{equation}\label{stampa}
\nabla |\nabla r|(x) = 0 \qquad \text{for a.e. } \, x \in S_{0,+\infty}.
\end{equation}
Pick one such $x$ and a local Darboux frame $\{e_i\}, \{e_\alpha\}$, $1 \le i \le m$, $m+1 \le \alpha \le n$ around $x$, that is, $\{e_i\}$ is a local orthonormal frame for $TM$ and $\{e_\alpha\}$ is a local orthonormal frame for the normal bundle $TM^\perp$. Since $\nabla r(x)=0$, then $\bar \nabla \bar \rho(x) \in T_xM^\perp$. Up to rotating $\{e_\alpha\}$, we can suppose that $\bar \nabla \bar \rho(x) = e_n(x)$. Fix $i$ and consider a unit speed geodesics $\gamma: (-\eps,\eps) \ra M$ such that $\gamma(0)=x$, $\dot \gamma(0)=e_i$. Identify $\gamma$ with its image $\varphi \circ \gamma$ in $N$. By Taylor's formula and \eqref{stampa},
$$
|\nabla r|(\gamma(t)) = o(t) \qquad \text{as } \, t \ra 0^+.
$$
Using that $|\nabla r| = \sqrt{ 1- \sum_\alpha( \bar \nabla \bar \rho, e_\alpha)^2}$, we deduce
\begin{equation}\label{buono_0}
1 - \sum_\alpha( \bar \nabla \bar \rho, e_\alpha)_{\gamma(t)}^2 = o(t^2).
\end{equation}
Since $\bar \nabla \bar \rho(x) = e_n(x)$, we deduce from \eqref{buono} that also
\begin{equation}\label{buono}
u(t) \doteq 1 - ( \bar \nabla \bar \rho, e_n)_{\gamma(t)}^2 = o(t^2),
\end{equation}
thus $\dot u(0) = \ddot u(0)=0$. Computing, 
$$
\begin{array}{lcl}
\dot u(t) & = & 2(\bar \nabla \bar \rho, e_n) \left[ (\bar \nabla_{\dot \gamma} \bar \nabla \bar \rho, e_n) + ( \bar \nabla \bar \rho, \bar \nabla_{\dot \gamma} e_n)\right] \\[0.2cm]
\ddot u(t) & = & 2 \left[ (\bar \nabla_{\dot \gamma} \bar \nabla \bar \rho, e_n) + ( \bar \nabla \bar \rho, \bar \nabla_{\dot \gamma} e_n)\right]^2 \\[0.2cm]
& & + 2 (\bar \nabla \bar \rho, e_n)\left[\disp ( \bar \nabla_{\dot \gamma} \bar \nabla_{\dot \gamma} \bar \nabla \bar \rho, e_n) + 2 (\bar \nabla_{\dot \gamma} \bar \nabla \bar \rho, \bar \nabla_{\dot \gamma} e_n) +  (\bar \nabla \bar \rho, \bar \nabla_{\dot \gamma} \bar \nabla_{\dot \gamma} e_n)\right].
\end{array}
$$
Evaluating at $t=0$ we deduce
$$
0 = \ddot u(0)/2 = \disp ( \bar \nabla_{e_i} \bar \nabla_{e_i} \bar \nabla \bar \rho, \bar \nabla \bar \rho) + 2 (\bar \nabla_{e_i} \bar \nabla \bar \rho, \bar \nabla_{e_i} e_n) +  (e_n, \bar \nabla_{e_i} \bar \nabla_{e_i} e_n).
$$
Differentiating twice $1 = |e_n|^2 = |\bar \nabla \bar \rho|^2$ along $e_i$ we deduce the identities $(e_n, \bar \nabla_{e_i} \bar \nabla_{e_i} e_n) = -|\bar \nabla_{e_i} e_n|^2$ and $( \bar \nabla_{e_i} \bar \nabla_{e_i} \bar \nabla \bar \rho, \bar \nabla \bar \rho) = -|\bar \nabla_{e_i} \bar \nabla \bar \rho|^2$, hence
$$
0 = \ddot u(0)/2 = \disp -|\bar \nabla_{e_i}\bar \nabla \bar \rho|^2 + 2 (\bar \nabla_{e_i} \bar \nabla \bar \rho, \bar \nabla_{e_i} e_n) - |\bar \nabla_{e_i} e_n|^2 = - \big| \bar \nabla_{e_i} \bar \nabla \bar \rho - \bar \nabla_{e_i} e_n\big|^2,
$$																												which implies $\bar \nabla_{e_i} \bar \nabla \bar \rho = \bar \nabla_{e_i} e_n$. Therefore, at $x$, 
$$
(\II(e_i,e_i),e_n) = - (\bar \nabla_{e_i} e_n, e_i) = - (\bar \nabla_{e_i} \bar \nabla \bar \rho, e_i) = \overline{\Hess}(\bar \rho)(e_i,e_i).																
$$
Tracing with respect to $i$, using that $M$ is minimal and \eqref{hessiancomp} we conclude that
$$
0 \ge \frac{\sn_k'(r(x))}{\sn_k(r(x))} (m- |\nabla r(x)|^2) = m\frac{\sn_k'(r(x))}{\sn_k(r(x))} >0,
$$
a contradiction.
\end{proof}

\section{Monotonicity formulae and conditions equivalent to $\Theta(+\infty)<+\infty$}\label{sec_mono}

Our first step is to improve the classical monotonicity formula for $\Theta(r)$, that can be found in \cite{simon} (for $N=\R^n$) and \cite{Anderson} (for $N=\HH^n_k$). For $k \ge 0$, let $v_k, V_k$ denote the volume function, respectively, of geodesic spheres and balls in the space form of sectional curvature $-k$ and dimension $m$, i.e.,
\begin{equation}\label{def_vk}
v_k(s) = \omega_{m-1}\sn_k(s)^{m-1}, \qquad V_k(s) = \int_0^s v_k(\sigma) \di \sigma,
\end{equation}
where $\omega_{m-1}$ is the volume of the unit sphere $\Sph^{m-1}$. Although we shall not use all the four monotone quantities in \eqref{monotones} below, nevertheless they have independent interest, and for this reason we state the result in its full strength. We define the \emph{flux} $J(s)$ of $\nabla r$ over the extrinsic sphere $\Gamma_s$:
\begin{equation}\label{def_J}
J(s) \doteq \frac{1}{v_k(s)} \int_{\Gamma_s} |\nabla r|.
\end{equation}

\begin{proposition}[The monotonicity formulae]\label{prop_monotonicity}
Suppose that $N$ has a pole $\bar o$ and satisfies \eqref{pinchsectio}, and let $\varphi : M^m \ra N^n$ be a proper minimal immersion. Then, the functions
\begin{equation}\label{monotones}
\Theta(s), \qquad \frac{1}{V_k(s)}\int_{\{0 \le r \le s\}}|\nabla r|^2
\end{equation}
are absolutely continuous and monotone non-decreasing. Moreover, $J(s)$ coincides, on an open set of full measure, with the absolutely continuous function
$$
\bar J(s) \doteq \frac{1}{v_k(s)}\int_{\{r \le s\}} \Delta r
$$
and $\bar J(s)$, $V_k(s)\big[ \bar J(s)-\Theta(s)\big]$ are non-decreasing. In particular, $J(s) \ge \Theta(s)$ a.e. on $\R^+$.
\end{proposition}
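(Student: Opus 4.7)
The plan is to deduce everything from a single pointwise inequality produced by minimality and the Hessian comparison \eqref{hessiancomp}: tracing $\overline{\Hess}(\bar\rho)$ along an orthonormal basis of $TM$ gives, off $\varphi^{-1}(\bar o)$,
\begin{equation*}
\Delta r(x)\;\ge\;\frac{\sn_k'(r(x))}{\sn_k(r(x))}\bigl(m-|\nabla r(x)|^2\bigr)\;\ge\;0.
\end{equation*}
Absolute continuity of $V(s):=\vol(M\cap B_s)$, of $I(s):=\int_{\{r\le s\}}|\nabla r|^2$ and of $v_k(s)\bar J(s)=\int_{\{r\le s\}}\Delta r$ comes from combining Proposition \ref{important!} with the coarea formula \eqref{coarea_strong2}, which also gives the a.e. derivatives
\begin{equation*}
V'(s)=\int_{\Gamma_s}\frac{1}{|\nabla r|},\qquad I'(s)=\int_{\Gamma_s}|\nabla r|,\qquad (v_k\bar J)'(s)=\int_{\Gamma_s}\frac{\Delta r}{|\nabla r|}.
\end{equation*}
At any regular value $s$ of $r$ (an open full-measure subset of $\R^+$, by Sard), the divergence theorem on $\{r\le s\}$ yields $v_k(s)\bar J(s)=\int_{\Gamma_s}|\nabla r|=v_k(s)J(s)$, so $J=\bar J$ there.

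The core step is the monotonicity of $\bar J$. Using $v_k'/v_k=(m-1)\sn_k'/\sn_k$, the pointwise inequality for $\Delta r$ and $v_k\bar J=\int_{\Gamma_s}|\nabla r|$, I compute
\begin{equation*}
v_k(s)\bar J'(s)=(v_k\bar J)'(s)-v_k'(s)\bar J(s)\ge\frac{\sn_k'(s)}{\sn_k(s)}\int_{\Gamma_s}\Bigl[\frac{m-|\nabla r|^2}{|\nabla r|}-(m-1)|\nabla r|\Bigr]=\frac{m\sn_k'(s)}{\sn_k(s)}\int_{\Gamma_s}\frac{1-|\nabla r|^2}{|\nabla r|}\ge 0.
\end{equation*}
Monotonicity of $I/V_k$ follows at once: since $I'=v_k\bar J$,
\begin{equation*}
\Bigl(\frac{I}{V_k}\Bigr)'(s)=\frac{v_k(s)}{V_k(s)^2}\int_0^s v_k(\sigma)\bigl[\bar J(s)-\bar J(\sigma)\bigr]\di\sigma\ge 0.
\end{equation*}

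The remaining two monotonicities rest on the elementary ODE inequality
\begin{equation*}
m\,V_k(s)\,\sn_k'(s)\ge v_k(s)\,\sn_k(s),
\end{equation*}
which I verify by noting that $P(s):=m\sn_k'(s)\int_0^s\sn_k^{m-1}-\sn_k(s)^m$ satisfies $P(0)=0$ and $P'(s)=m\sn_k''(s)\int_0^s\sn_k^{m-1}=mk\sn_k(s)\int_0^s\sn_k^{m-1}\ge 0$, using $\sn_k''=k\sn_k$. The lower bound above for $\bar J'$ then rescales to
\begin{equation*}
V_k\bar J'\;\ge\;\frac{mV_k\sn_k'}{v_k\sn_k}\int_{\Gamma_s}\frac{1-|\nabla r|^2}{|\nabla r|}\;\ge\;\int_{\Gamma_s}\frac{1-|\nabla r|^2}{|\nabla r|}\;=\;V'-v_k\bar J,
\end{equation*}
whence $\bigl(V_k\bar J-V\bigr)'=V_k\bar J'+v_k\bar J-V'\ge 0$. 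Since $V_k(0)=0$, the non-decreasing function $V_k[\bar J-\Theta]$ is non-negative, so $\bar J\ge\Theta$ (equivalently $J\ge\Theta$ a.e.), and finally $V_k^2\Theta'=V_kV'-v_kV\ge v_k(V_k\bar J-V)\ge 0$. The main obstacle is precisely pinpointing the ODE inequality $mV_k\sn_k'\ge v_k\sn_k$: it is exactly what absorbs the slack between $\int_{\Gamma_s}|\nabla r|^{-1}$ and $\int_{\Gamma_s}|\nabla r|$ into the gain produced by $\bar J'$, and without it neither the monotonicity of $V_k[\bar J-\Theta]$ nor that of $\Theta$ can be read off directly from the basic identities.
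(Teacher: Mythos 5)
Your proof is correct, and it reorganizes the argument in a genuinely different (and arguably cleaner) way than the paper. The paper introduces the auxiliary function $f$ solving $f''+(m-1)\frac{\sn_k'}{\sn_k}f'=1$, shows $\Delta(f\circ r)\ge 1$ by establishing a nonnegativity property $z\ge 0$ of the ODE, and extracts the monotonicity of $V_k[\bar J-\Theta]$, $\Theta$ and $I/V_k$ by integrating this over annuli, deferring the monotonicity of $\bar J$ to a separate computation at the end. You invert the order: you prove $\bar J'\ge 0$ \emph{first}, via the clean algebraic identity $\frac{m-|\nabla r|^2}{|\nabla r|}-(m-1)|\nabla r|=m\frac{1-|\nabla r|^2}{|\nabla r|}$ (which uses only the lower Hessian bound), and you then get the monotonicity of $I/V_k$ as an immediate corollary through the integral representation $I(s)=\int_0^s v_k\bar J$, which the paper never exploits. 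Your elementary inequality $mV_k\sn_k'\ge v_k\sn_k$ is, upon expanding, precisely the paper's condition $z=\frac{m}{m-1}\frac{V_kv_k'}{v_k^2}-1\ge 0$, so the two arguments hinge on the same ODE fact; but your derivation of it (via $P'=mk\sn_k\int_0^s\sn_k^{m-1}\ge 0$) is more direct than the paper's, which reasons indirectly from the sign of $z'$ when $z<0$. The net effect is that your route dispenses with the test function $\psi=f(r)$ entirely and makes visible that all four monotonicities flow from two inputs — the traced lower Hessian comparison and one ODE inequality — with $\bar J'\ge 0$ as the keystone rather than an afterthought. One minor point worth flagging explicitly: when you deduce $\bar J\ge\Theta$ from the non-decrease of $V_k\bar J-V$ and its vanishing at $s=0^+$, you should note that if $\bar o\in\varphi(M)$ then $\bar J(0^+)$ is finite (it equals the multiplicity of $\varphi$ at $\bar o$, by a standard blow-up), so $V_k\bar J\to 0$; if $\bar o\notin\varphi(M)$ everything vanishes near $0$ anyway. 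The paper elides this too.
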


\begin{remark}\label{rem_tkachev}
\emph{To the best of our knowledge, the monotonicity of $J(s)$ (aside from its differentiability properties) has first been shown, in the Euclidean setting, in a paper by V. Tkachev \cite{tkachev}.
}
\end{remark}

\begin{proof}
We first observe that, in view of Lemma \ref{lem_coarea} and Proposition \ref{important!} applied with $f = \Delta r$, 
\begin{equation}\label{usefullll}
v_k(s)\bar J(s) \doteq \int_{\{r \le s\}} \Delta r \equiv \int_0^s \left[\int_{\Gamma_\sigma}\frac{\Delta r}{|\nabla r|}\right]\di \sigma
\end{equation}
is absolutely continuous, and by the divergence theorem it coincides with $v_k(s)J(s)$ whenever $s$ is a regular value of $r$. Consider
\begin{equation}\label{def_f}
f(s) = \int_0^s \frac{V_k(\sigma)}{v_k(\sigma)} \di \sigma = \int_0^s \frac{1}{v_k(\sigma)} \left[\int_0^\sigma v_k(\tau) \di \tau \right]\di \sigma 
\end{equation}
which is a $C^2$ solution of  
$$
f'' + (m-1) \frac{\sn_k'}{\sn_k} f' = 1 \quad \text{on } \, \R^+, \quad f(0)=0, \quad f'(0)=0,
$$
and define $\psi(x) = f(r(x)) \in C^2(M)$. Let $\{e_i\}$ be a local orthonormal frame on $M$. Since $\varphi$ is minimal, by the chain rule and the lower bound in the Hessian comparison theorem \ref{hessiancomp}
\begin{equation}\label{basica}
\Delta r = \sum_{j=1}^m \overline{\Hess}(\bar \rho)\big(\di \varphi(e_j), \di \varphi(e_j)\big) \ge \frac{\sn_k'(r)}{\sn_k(r)} \big(m -|\nabla r|^2\big).
\end{equation}
We then compute
\begin{equation}\label{deltaf}
\begin{array}{lcl}
\disp \Delta \psi & = & \disp f''|\nabla r|^2 + f'\Delta r  \ge \disp f''|\nabla r|^2 + f'\frac{\sn_k'}{\sn_k}(m-|\nabla r|^2) \\[0.3cm]
& = & \disp \disp 1 + \left(1-|\nabla r|^2\right)\left(f'(r)\frac{\sn_k'(r)}{\sn_k(r)}- f''(r)\right).
\end{array}
\end{equation}
It is not hard to show that the function
$$
z(s) \doteq f'(s)\frac{\sn_k'(s)}{\sn_k(s)}- f''(s) = \frac{m}{m-1}\frac{V_k(s) v_k'(s)}{v_k^2(s)} - 1.
$$
is non-negative and non-decreasing on $\R^+$. Indeed, from
\begin{equation}\label{derizeta}
z(0)=0, \qquad z'(s) = \frac{m}{v_k(s)} \left[k V_k(s) - \frac{1}{m-1} v_k'(s)z(s)\right]
\end{equation}
we deduce that $z'>0$ when $z<0$, which proves that $z \ge 0$ on $\R^+$. 
%Furthermore, by \eqref{derizeta}, $z'\le 0$ if and only if
%\begin{equation}\label{ine_zle0}
%z(s) \ge k(m-1) \frac{V_k(s)}{v_k'(s)},
%\end{equation}
%Call $p(s)$ the RHS of \eqref{ine_zle0}, and note that, by Proposition \ref{prop_garo} in Appendix 2 below, 
%$$
%p(s) = k(m-1) \frac{V_k(s)}{v_k(s)} \frac{v_k(s)}{v_k'(s)} \qquad \text{is non-decreasing on } \, \R^+, 
%$$
%and $p'(s)>0$ when $k>0$. Let $I$ be a connected component of $\{z-p>0\}$. Since $p$ is strictly increasing, $(z-p)' \le 0$ when $z-p \ge 0$, and therefore necessarily $I = \emptyset$. Hence, $z \le p$ on $\R^+$, which imply $z' \ge 0$ on $\R^+$.\\[0.2cm]
Fix $0<t<s$ regular values for $r$. Integrating \eqref{deltaf} on the smooth compact set $\{t \le r \le s\}$ and using the divergence theorem we deduce
\begin{equation}\label{mono}
\frac{V_k(s)}{v_k(s)} \int_{\Gamma_{\! s}} |\nabla r| - \frac{V_k(t)}{v_k(t)} \int_{\Gamma_t} |\nabla r| \ge \vol\big(\{t \le r \le s\}\big).
\end{equation}
By the definition of $J(s)$ and $\Theta(s)$, and since $J(s) \equiv \bar J(s)$ for regular values, the above inequality rewrites as follows:
$$
V_k(s)\bar J(s) - V_k(t)\bar J(t) \ge V_k(s)\Theta(s) - V_k(t)\Theta(t),
$$
or in other words, 
$$
V_k(s)\big[\bar J(s)-\Theta(s)\big] \ge V_k(t)\big[\bar J(t)-\Theta(t)\big].
$$
Since all the quantities involved are continuous, the above relation extends to all $t,s \in \R^+$, which proves the monotonicity of $V_k[\bar J -\Theta]$. Letting $t \ra 0$ we then deduce that $\bar J(s) \ge \Theta(s)$ on $\R^+$. Next, by using $f \equiv 1$ and $f \equiv |\nabla r|^2$ in Lemma \ref{lem_coarea} and exploiting again Proposition \ref{important!} we get
\begin{equation}\label{inevolume}
\vol\big(\{t \le r \le s\}\big) = \int_t^s \left[\int_{\Gamma_{\!\sigma}} \frac{1}{|\nabla r|}\right]\di \sigma, \qquad \int_{\{0\le r\le s\}}|\nabla r|^2 = \int_0^s \left[\int_{\Gamma_\sigma}|\nabla r|\right]\di \sigma,
\end{equation}
showing that the two quantities in \eqref{monotones} are absolutely continuous. Plugging into \eqref{mono}, letting $t \ra 0$ and using that $z \ge 0$ we deduce 
\begin{equation}\label{fromthis}
\frac{V_k(s)}{v_k(s)} \int_{\Gamma_{\! s}} |\nabla r| \ge \int_0^s \left[\int_{\Gamma_{\!\sigma}} \frac{1}{|\nabla r|}\right]\di \sigma,
\end{equation}
for regular $s$, which together with the trivial inequality $|\nabla r|^{-1} \ge |\nabla r|$ and with \eqref{inevolume} gives
\begin{equation}\label{dueineq}
\begin{array}{l}
\quad \disp V_k(s)\int_{\Gamma_{\! s}} |\nabla r| \ge v_k(s) \int_0^s \left[\int_{\Gamma_{\!\sigma}} |\nabla r|\right]\di \sigma, \\[0.4cm]
\quad \disp V_k(s)\left[ \frac{\di}{\di s} \vol\big( \{ r \le s\}\big)\right] \ge v_k(s) \vol\big(\{r \le s\}\big).
\end{array}
\end{equation}
Integrating the second inequality we obtain the monotonicity of $\Theta(s)$, while integrating the first one and using \eqref{inevolume} we obtain the monotonicity of the second quantity in \eqref{monotones}. To show the monotonicity of $\bar J(s)$, by \eqref{basica} and using the full information coming from \eqref{hessiancomp} we obtain
\begin{equation}\label{deltar}
\frac{\sn_k'(r)}{\sn_k(r)}\big(m-|\nabla r|^2\big) \le \Delta r \le \frac{g'(r)}{g(r)}\big(m-|\nabla r|^2\big).
\end{equation}
In view of the identity \eqref{usefullll}, we consider regular $s>0$, we divide \eqref{deltar} by $|\nabla r|$ and integrate on $\Gamma_s$ to get
%
%Set for convenience 
%\begin{equation}\label{def_Is}
%I(s) \doteq \int_{\Gamma_{\! s}} |\nabla r|, \qquad T(s) \doteq \frac{\int_{\Gamma_{\! s}}|\nabla r|^{-1}}{\int_{\Gamma_{\! s}}|\nabla r|}-1.
%\end{equation}
%Because of Proposition \ref{important!}, applying Lemma \ref{lem_coarea} with the choice $f = \Delta r$ we deduce that
%$$
%\int_{\{t \le r \le s\}} \Delta r = \int_t^s \left[ \int_{\Gamma_{\!\sigma}} \frac{\Delta r}{|\nabla r|} \right] \di \sigma,
%$$
%and by the divergence theorem the left hand-side is $I(s)-I(t)$. Therefore, $I(s)$ is absolutely continuous with derivative
%$$
%\int_{\Gamma_{\! s}} \frac{\Delta r}{|\nabla r|} \in L^1_\loc(\R).
%$$
\begin{equation}\label{diff_vol}
\frac{\sn_k'(s)}{\sn_k(s)}\int_{\Gamma_{\! s}}\frac{m-|\nabla r|^2}{|\nabla r|} \le \big(v_k(s)\bar J(s)\big)' \le \frac{g'(s)}{g(s)}\int_{\Gamma_{\! s}}\frac{m-|\nabla r|^2}{|\nabla r|}.
\end{equation}
Writing $m-|\nabla r|^2 = m(1-|\nabla r|^2) + (m-1)|\nabla r|^2$, setting for convenience 
\begin{equation}\label{def_Is}
v_g(s) = \omega_{m-1}g(s)^{m-1}, \qquad T(s) \doteq \frac{\int_{\Gamma_{\! s}}|\nabla r|^{-1}}{\int_{\Gamma_{\! s}}|\nabla r|}-1,
\end{equation}
rearranging we deduce the two inequalities  
\begin{equation}\label{diff_vol_2}
\begin{array}{rcl}
\big(v_k(s)\bar J(s)\big)' & \ge & \disp v_k'(s)\bar J(s) + m\frac{\sn_k'(s)}{\sn_k(s)}T(s)v_k(s) \bar J(s)\\[0.4cm]
\big(v_k(s)\bar J(s)\big)' & \le & \disp \frac{v_g'(s)}{v_g(s)}v_k(s)\bar J(s) + m\frac{g'(s)}{g(s)}T(s)v_k(s) \bar J(s).
\end{array}
\end{equation}
Expanding the derivative on the left-hand side, we deduce
\begin{equation}\label{diff_vol_3}
\begin{array}{rcl}
\bar J'(s) & \ge & \disp m \frac{\sn_k'(s)}{\sn_k(s)}T(s) \bar J(s), \\[0.4cm]
\disp \left( \frac{v_k(s)}{v_g(s)}\bar J(s)\right)' & \le & \disp m \frac{g'(s)}{g(s)}T(s) \left( \frac{v_k(s)}{v_g(s)}\bar J(s)\right).
\end{array}
\end{equation}
The first inequality together with the non-negativity of $T$ implies the desired $\bar J' \ge 0$, concluding the proof. The second inequality in \eqref{diff_vol_3}, on the other hand, will be useful in awhile.
\end{proof}

\begin{remark}\label{rem_proper}
\emph{The properness of $\varphi$ is essential in the above proof to justify integrations by parts. However, if $\varphi$ is non-proper, at least when $N$ is Cartan-Hadamard with sectional curvature $\bar K \le -k$ the function $\Theta$ is still monotone in an extended sense. In fact, as it has been observed in \cite{Tysk} for $N=\R^{m+1}$, $\Theta(s)= +\infty$ for each $s$ such that $\{r <s\}$ contains a limit point of $\varphi$. Briefly, if $\bar x \in N$ is a limit point with $\bar \rho(\bar x) < s$, choose $\eps>0$ such that $2\eps < s - \bar \rho(\bar x)$, and a diverging sequence $\{x_j\}\subset M$ such that $\varphi(x_j) \ra \bar x$. We can assume that the balls $B_\eps(x_j) \subset M$ are pairwise disjoint. Since $\overline \dist(\varphi(x), \varphi(x_j)) \le \dist(x,x_j)$, we deduce that $\varphi(B_\eps(x_j)) \subset \{r <s\}$ for $j$ large enough, and thus
$$
\vol\big(\{r \le s\}\big) \ge \sum_j \vol(B_\eps(x_j)).
$$
However, using that $\bar K \le -k$ and since $N$ is Cartan-Hadamard, we can apply the intrinsic monotonicity formula (see Proposition \ref{prop_monointri} in Appendix 2 below) with chosen origin $\varphi(x_j)$ to deduce that $\vol(B_\eps(x_j)) \ge V_k(\eps)$ for each $j$, whence $\vol(\{r \le s\}) = +\infty$.
}
\end{remark}

We next investigate conditions equivalent to the finiteness of the density.

\begin{proposition}\label{prop_equivalence}
Suppose that $N$ has a pole and satisfies \eqref{pinchsectio}. Let $\varphi : M^m \ra N^n$ be a proper minimal  immersion. Then, the following properties are equivalent: 
\begin{itemize}
\item[(1)] $\Theta(+\infty)< +\infty$;
\item[(2)] $\bar J(+\infty)<+\infty$.
\end{itemize}
Moreover, both $(1)$ and $(2)$ imply that 
\begin{equation}\label{integrabilitystrange}\tag{$3$}
\frac{\sn_k'(s)}{\sn_k(s)}\left[\frac{\int_{\Gamma_{\! s}}|\nabla r|^{-1}}{\int_{\Gamma_{\! s}}|\nabla r|}-1\right]  \in L^1(\R^+).
\end{equation}
If further $N$ has an integral pinching to $\R^n$ or $\HH^n_k$, then $(1) \Leftrightarrow (2) \Leftrightarrow (3)$.
\end{proposition}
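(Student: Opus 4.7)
The plan is to prove (2)$\Rightarrow$(1) trivially, then establish the core implication (1)$\Rightarrow$(2) via a coarea-based flux comparison, and finally deduce (2)$\Leftrightarrow$(3) under integral pinching by integrating the two differential inequalities in \eqref{diff_vol_3}. The easy step (2)$\Rightarrow$(1) is immediate from $\bar J(s)\ge\Theta(s)$, which is part of Proposition~\ref{prop_monotonicity}.

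The heart of the matter is (1)$\Rightarrow$(2). The key observation I would exploit is that, since $|\nabla r|\le 1$ on $M$, the coarea formula (justified by Proposition~\ref{important!}) yields, for a.e.\ $s>0$,
\[
v_k(s)J(s)=\int_{\Gamma_s}|\nabla r|\;\le\;\int_{\Gamma_s}|\nabla r|^{-1}=\frac{\di}{\di s}\vol\big(\{r\le s\}\big)=\big(V_k\Theta\big)'(s).
\]
Integrating this pointwise bound over $[\tau,s]$ and using the monotonicity of $\bar J$ (which coincides with $J$ a.e., by Proposition~\ref{prop_monotonicity}), I would obtain
\[
\bar J(\tau)\big[V_k(s)-V_k(\tau)\big]\;\le\;\int_\tau^s v_k\bar J\,\di\sigma\;\le\;V_k(s)\Theta(s)-V_k(\tau)\Theta(\tau).
\]
Dividing by $V_k(s)-V_k(\tau)$ and letting $s\to+\infty$, so that $V_k(s)\to+\infty$, forces $\bar J(\tau)\le\Theta(+\infty)$ for every $\tau$, hence (2); as a bonus, the argument produces the equality $\bar J(+\infty)=\Theta(+\infty)$.

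For (2)$\Rightarrow$(3), I would divide the first inequality in \eqref{diff_vol_3} by $\bar J(s)>0$ to rewrite it as $(\log\bar J)'(s)\ge m\,\sn_k'(s)/\sn_k(s)\cdot T(s)$, and integrate from some $\tau$ with $\bar J(\tau)>0$ up to $s$; the finiteness of $\bar J(+\infty)$ then forces $(\sn_k'/\sn_k)T\in L^1$ at infinity, while integrability on $(0,\tau]$ is routine (either $\Gamma_s=\emptyset$ for small $s$, or $T(s)$ vanishes fast enough at $0$ to absorb the singularity of $\sn_k'/\sn_k\sim 1/s$).

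Finally, for (3)$\Rightarrow$(2) under integral pinching, I would rewrite the second inequality in \eqref{diff_vol_3} using $v_k/v_g=(\sn_k/g)^{m-1}$, $(\log(g/\sn_k))'=\zeta$, and $g'/g=\sn_k'/\sn_k+\zeta$, obtaining
\[
(\log\bar J)'(s)\;\le\; m\frac{\sn_k'(s)}{\sn_k(s)}T(s)\;+\;m\,\zeta(s)T(s)\;+\;(m-1)\zeta(s).
\]
By Proposition~\ref{prop_pinching}(ii), integral pinching delivers both $\zeta\in L^1(\R^+)$ and $\zeta\,\sn_k/\sn_k'\to 0$, the latter giving $\zeta T\le(\sn_k'/\sn_k)T$ for $s$ large. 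Coupled with hypothesis (3), this makes the right-hand side integrable at $+\infty$, so $\log\bar J$ is bounded above and $\bar J(+\infty)<+\infty$. The main obstacle is establishing (1)$\Leftrightarrow$(2) without any asymptotic information on $g$; the resolution is precisely the elementary bound $|\nabla r|\le|\nabla r|^{-1}$ exploited in the second paragraph.
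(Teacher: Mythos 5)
Your proposal is correct and follows essentially the same path as the paper's proof. The paper proves $(1)\Rightarrow(2)$ by contradiction: assuming $\bar J(+\infty)=+\infty$ it bounds $\Theta(s)\ge c\,[V_k(s)-V_k(s_c)]/V_k(s)$ for any $c$ using the same inequality $v_k J =\int_{\Gamma_\sigma}|\nabla r|\le\int_{\Gamma_\sigma}|\nabla r|^{-1}=(V_k\Theta)'$ and the monotonicity of $\bar J$; your direct version is cosmetically cleaner and yields $\bar J(+\infty)=\Theta(+\infty)$ explicitly (which the paper also notes in a remark). For $(2)\Leftrightarrow(3)$ the paper integrates \eqref{diff_vol_3} into the exponential form \eqref{diff_vol_4}, using $\zeta\le C\,\sn_k'/\sn_k$ and $g\le C\,\sn_k$ from Proposition~\ref{prop_pinching}(ii); your logarithmic rewriting with the decomposition $g'/g=\sn_k'/\sn_k+\zeta$ and $(\log(g/\sn_k))'=\zeta$ is an equivalent packaging of the same computation and relies on the same ingredients from Proposition~\ref{prop_pinching}(ii).
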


\begin{proof}
We refer to the proof of the previous proposition for notation and formulas.\\
$(2) \Rightarrow (1)$ is obvious since, by the previous proposition, $\bar J(s) \ge \Theta(s)$.\\
$(1) \Rightarrow (2)$. Note that the limit in $(2)$ exists since $\bar J$ is monotone. Suppose by contradiction that $\bar J(+\infty)=+\infty$, let $c>0$ and fix $s_c$ large enough that $\bar J(s) \ge c$ for $s\ge s_c$. From \eqref{inevolume} and \eqref{def_J}, and since $\bar J \equiv J$ a.e., 
$$
\begin{array}{lcl}
\Theta(s) & = & \disp \frac{1}{V_k(s)} \int_0^s \left[ \int_{\Gamma_{\!\sigma}} \frac{1}{|\nabla r|} \right]\di \sigma \ge \frac{1}{V_k(s)}\int_0^s v_k(\sigma)J(\sigma)\di \sigma \\[0.5cm]
& \ge & \disp \frac{1}{V_k(s)}\int^s_{s_c} v_k(\sigma)J(\sigma)\di \sigma \ge c\frac{V_k(s)-V_k(s_c)}{V_k(s)}.
\end{array}
$$
Letting $s \ra +\infty$ we get $\Theta(+\infty) \ge c$, hence $\Theta(+\infty)=+\infty$ by the arbitrariness of $c$, contradicting $(1)$.\\
$(2) \Rightarrow (3)$. Integrating \eqref{diff_vol_3} on $[1,s]$ we obtain
\begin{equation}\label{diff_vol_4}
c_1 \exp\left\{m\int_1^s \frac{\sn_k'(\sigma)}{\sn_k(\sigma)}T(\sigma)\di \sigma\right\} \le \bar J(s) \le c_2 \frac{v_g(s)}{v_k(s)}\exp\left\{m\int_1^s\left[\frac{g'(\sigma)}{g(\sigma)}\right]T(\sigma)\di \sigma \right\}, 
\end{equation}
%\begin{array}{rcl}
%\disp \bar J(s) & \ge & \disp \bar J(1) ; \\[0.4cm]
%\disp \bar J(s) & \le & \disp \frac{v_g(s)}{v_k(s)} \frac{\bar J(1)v_k(1)}{v_g(1)} \exp\left\{m \int_1^s\left[\frac{g'(\sigma)}{g(\sigma)}\right]T(\sigma)\di \sigma \right\},
%\end{array}
%\end{equation}
for some constants $c_1,c_2>0$, where $v_g(s),T(s)$ is as in \eqref{def_Is}. The validity of $(2)$ and the first inequality show that $\sn_k'T/\sn_k \in L^1(+\infty)$, that is, $(3)$ is satisfied.\\
$(3) \Rightarrow (2)$. In our pinching assumptions on $N$, $(ii)$ in Proposition \ref{prop_pinching} gives 
$$
\frac{g'}{g} = \frac{\sn_k'}{\sn_k} + \zeta, \quad \text{with} \quad \zeta \le C \frac{\sn_k'}{\sn_k} \ \text{ on } \, \R^ +, \quad \text{and} \quad g \le C\sn_k \ \text{ on } \, \R^+, 
$$
for some $C>0$. Plugging into \eqref{diff_vol_4} and recalling the definition of $v_g$ we obtain
$$
\disp \bar J(s) \le c_3\exp\left\{c_4\int_1^s\left[\frac{\sn_k'(\sigma)}{\sn_k(\sigma)}\right]T(\sigma)\di \sigma \right\},
$$
for some $c_3,c_4>0$, and $(3) \Rightarrow (2)$ follows by letting $s \ra +\infty$. 
\end{proof}

\begin{remark}
\emph{A version of Propositions \ref{prop_monotonicity} and \ref{prop_equivalence} that covers most of the material presented above has also been independently proved in the very recent \cite{GimenoMarkvosen}, see Theorems 2.1 and 6.1 therein. We mention that their results are stated for more general ambient spaces subjected to specific function-theoretic requirements, and that, in Proposition \ref{prop_equivalence}, it holds in fact $\bar J(+\infty) \equiv \Theta(+\infty)$. For an interesting characterization, when $N=\R^n$, of the limit $\bar J(+\infty)$ in terms of an invariant called the projective volume of $M$ we refer to \cite{tkachev}.  
}
\end{remark}

\section{Proof of Theorem 1}\label{sec_proof}
Let $M^m$ be a minimal properly immersed submanifold in $N^n$, and suppose that $N$ has a pointwise or integral pinching to a space form. Because of the upper bound in \eqref{pinchsectio}, by \cite{Cheung} and \cite{GPB} the bottom of $\sigma(M)$ satisfies
\begin{equation}\label{lowbound_spec}
\inf \sigma(M) \ge \frac{(m-1)^2k}{4}.
\end{equation}
Briefly, the lower bound in \eqref{deltar} implies
$$
\Delta r \ge (m-1) \frac{\sn_k'(r)}{\sn_k(r)} \ge (m-1) \sqrt{k} \qquad \text{on } \, M.
$$
Integrating on a relatively compact, smooth open set $\Omega$ and using the divergence theorem and $|\nabla r| \le 1$, we deduce $\haus^{m-1}(\partial\Omega) \ge (m-1) \sqrt{k}\vol(\Omega)$. The desired \eqref{lowbound_spec} then follows from Cheeger's inequality: 
$$
\inf \sigma(M) \ge \frac{1}{4}\left(\inf_{\Omega \Subset M} \frac{\haus^{m-1}(\partial \Omega)}{\vol(\Omega)}\right)^2 \ge \frac{(m-1)^2k}{4}.
$$ 
To complete the proof of the theorem, since $\sigma (M)$ is closed it is sufficient to show that each $\lambda > (m-1)^2k/4$ lies in $\sigma(M)$. 
%
%\begin{theorem}\cite[Thm 1.1]{elworthywang}
%%
%Let $M^m$ be a complete manifold, and suppose that there exists a proper function $\gamma \in C^0(M)$ which is  $C^2$ on $\{\gamma >R\}$, for some $R>0$. Denote with $A_{t,s} \doteq \{t \le \gamma \le s\}$. For $c>0$, define $\di \mu_c = e^{-c\gamma} \di x$.\\
%If there exists $c \ge 0$ such that
%\begin{equation}
%\begin{array}{ll}
%(1) & \disp \lim_{t \ra +\infty} \limsup_{s \ra +\infty} \frac{1}{\mu_c(A_{t,s})} \int_{A_{t,s}} \Big[ (\Delta \gamma - c)^2 + (|\nabla \gamma|^2-1)^2 \Big] \di \mu_c = 0. \\[0.5cm]
%(2) & \disp \lim_{s \ra +\infty} \max \Big\{ \mu_c(A_{t_0,s}), \mu_c(A_{s,+\infty})^{-1}\Big\} e^{-\eps s} = 0 \qquad \text{for each } \, \eps>0,
%\end{array}
%\end{equation}
%then $\sigma(M) \supset [c^2/4,+\infty)$.
%\end{theorem}
%
%%

Set for convenience $\beta \doteq \sqrt{\lambda - (m-1)^2k/4}$ and, for $0 \le t<s$, let $A_{t,s}$ denote the extrinsic annulus
$$
A_{t,s} \doteq \big\{ x \in M \ : \ r(x) \in [t,s]\big\}.
$$
Define the weighted measure $\di \mu_k \doteq v_k(r)^{-1}\di x$ on $\{r \ge 1\}$. Hereafter, we will always restrict to this set. Consider 
\begin{equation}\label{ODEpsi}
\psi(s) \doteq \frac{e^{i \beta s}}{\sqrt{v_k(s)}}, \qquad \text{which solves} \qquad \psi'' + \psi' \frac{v_k'}{v_k} + \lambda \psi = a(s) \psi, 
\end{equation}
where
\begin{equation}\label{def_as}
a(s) \doteq \frac{(m-1)^2 k}{4} + \frac{1}{4}\left(\frac{v_k'(s)}{v_k(s)}\right)^2 - \frac{1}{2}\frac{v_k''(s)}{v_k(s)} \ra 0 
% = \frac{(m-1)(m-3)}{4}\left[k - \left(\frac{\sn_k'(s)}{\sn_k(s)}\right)^2\right] \ra 0
\end{equation}
as $s \ra +\infty$. For technical reasons, fix $R>1$ large such that $\Theta(R)>0$. Fix $t,s,S$ such that
$$
R+1 < t <  s < S-1, 
$$
and let $\eta \in C^\infty_c(\R)$ be a cut-off function satisfying  
$$
\begin{array}{l}
\disp 0 \le \eta \le 1, \quad \eta\equiv 0 \ \text{ outside of } \,  (t-1,S), \quad \eta \equiv 1 \ \text{ on } \, (t,s), \\[0.2cm]
|\eta'|+ |\eta''| \le C_0 \ \ \text{ on } \, [t-1,s], \qquad |\eta'| + |\eta''| \le \frac{C_0}{S-s} \ \ \text{ on } \, [s,S]
\end{array}
$$
for some absolute constant $C_0$ (the last relation is possible since $S-s \ge 1$). The value $S$ will be chosen later in dependence of $s$. Set $u_{t,s} \doteq \eta(r)\psi(r)\in C^\infty_c(M)$. Then, by \eqref{ODEpsi},
$$
\begin{array}{lcl}
\Delta u_{t,s} + \lambda u_{t,s} & = & \disp (\eta''\psi + 2\eta' \psi' + \eta \psi'')|\nabla r|^2 + (\eta'\psi + \eta \psi') \Delta r + \lambda \eta \psi\\[0.2cm]
%& = & \disp \left(\eta''\psi + 2\eta' \psi' -\frac{v_k'}{v_k} \eta \psi' - \lambda \eta \psi + a\eta\psi\right)|\nabla r|^2 + (\eta'\psi + \eta \psi') \Delta r + \lambda \eta \psi\\[0.3cm]
& = & \disp \left(\eta''\psi + 2\eta' \psi' -\frac{v_k'}{v_k} \eta \psi' - \lambda \eta \psi + a \eta \psi\right)(|\nabla r|^2-1) + a\eta \psi \\[0.3cm]
& & \disp + (\eta' \psi+ \eta \psi')\left(\Delta r - \frac{v_k'}{v_k}\right) + \disp \left(\eta''\psi + 2\eta' \psi' + \eta' \psi \frac{v_k'}{v_k}\right).\\[0.3cm]
\end{array}
$$
Using that there exists an absolute constant $c$ for which $|\psi|+ |\psi'| \le c/\sqrt{v_k}$, the following inequality holds: 
$$
\begin{array}{lcl}
\|\Delta u_{t,s} + \lambda u_{t,s}\|^2_2 & \le & \disp C \left( \int_{A_{t-1,S}}\left[(1-|\nabla r|^2)^2+ \left(\Delta r - \frac{v_k'}{v_k}\right)^2 + a(r)^2\right]\di \mu_k \right.\\[0.5cm]
& & \disp \left. + \frac{\mu_k(A_{s,S})}{(S-s)^2} + \mu_k(A_{t-1,t}) \right), 
\end{array}
$$
for some suitable $C$ depending on $c,C_0$. Since $\|u_{t,s}\|^2_2 \ge \mu_k(A_{t,s})$ and $(1-|\nabla r|^2)^2 \le 1-|\nabla r|^2$, we obtain 
\begin{equation}\label{weelldone!}
\begin{array}{lcl}
\disp\frac{\|\Delta u_{t,s} + \lambda u_{t,s}\|^2_2}{\|u_{t,s}\|_2^2} & \le & \disp C \left( \frac{1}{\mu_k(A_{t,s})}\int_{A_{t-1,S}}\left[1-|\nabla r|^2+ \left(\Delta r - \frac{v_k'}{v_k}\right)^2 + a(r)^2\right]\di \mu_k \right.\\[0.5cm]
& & \disp \left. + \frac{1}{(S-s)^2}\frac{\mu_k(A_{s,S})}{\mu_k(A_{t,s})} + \frac{\mu_k(A_{t-1,t})}{\mu_k(A_{t,s})} \right) 
\end{array}
\end{equation}

Next, using \eqref{hessiancomp},
$$
\begin{array}{lcl}
\Delta r & = & \disp \sum_{j=1}^m \overline{\Hess}(\bar \rho)(e_i,e_i) = \frac{\sn_k'(r)}{\sn_k(r)}(m-|\nabla r|^2) + \mathcal{P}(x) \\[0.3cm]
& = & \disp \frac{v_k'(r)}{v_k(r)} + \frac{\sn_k'(r)}{\sn_k(r)}(1-|\nabla r|^2) + \mathcal{P}(x),
\end{array}
$$
where, by Proposition \ref{prop_pinching},
\begin{equation}\label{esti_T}
\begin{array}{lcl}
0 \le \mathcal{P}(x) & \doteq & \disp \sum_{j=1}^m \overline{\Hess}(\bar \rho)(e_i,e_i) - \frac{\sn_k'(r)}{\sn_k(r)}(m-|\nabla r|^2) \\[0.3cm]
& \le & \disp \left(\frac{g'(r)}{g(r)} - \frac{\sn_k'(r)}{\sn_k(r)}\right)(m-|\nabla r|^2) = \zeta(r)(m-|\nabla r|^2) \le m \zeta(r).
\end{array}
\end{equation}
We thus obtain, on the set $\{r \ge 1\}$,
\begin{equation}\label{stimapadrao_0}
\begin{array}{lcl}
\disp \left(\Delta r - \frac{v_k'}{v_k}\right)^2 + 1-|\nabla r|^2 + a(r)^2 & \le & \disp \left[\frac{\sn_k'(r)}{\sn_k(r)}(1-|\nabla r|^2) + m\zeta(r)\right]^2 \\[0.4cm]
& & \disp + 1-|\nabla r|^2 + a(r)^2 \\[0.3cm]
& \le & \disp C \Big( \zeta(r)^2 + 1-|\nabla r|^2 + a(r)^2\Big)
\end{array}
\end{equation}
for some absolute constant $C$. Note that, in both our pointwise or integral pinching assumptions on $N$, by Proposition \ref{prop_pinching} it holds $\zeta(s) \ra 0$ as $s \ra +\infty$. Set
$$
F(t) \doteq \sup_{\sigma \in [t-1,+\infty)}[a(\sigma)^2+ \zeta(\sigma)^2],
$$
and note that $F(t) \ra 0$ monotonically as $t \ra +\infty$. Integrating \eqref{stimapadrao_0} we get the existence of $C>0$ independent of $s,t$ such that
\begin{equation}\label{simplify!}
\begin{array}{l}
\disp \int_{A_{t-1,S}} \left[\left(\Delta r - \frac{v_k'}{v_k}\right)^2 + 1-|\nabla r|^2 + a(r)^2\right]\di \mu_k \\[0.5cm]
\qquad \qquad \le \disp C \left( F(t)\int_{A_{t-1,S}}\frac{1}{v_k(r)} + \int_{A_{t-1,S}}\frac{1-|\nabla r|^2}{v_k(r)}\right). 
\end{array} 
\end{equation}
%Define 
%$$
%T(s) \doteq \frac{\int_{\Gamma_s}\big[|\nabla r|^{-1}-|\nabla r|\big]}{\int_{\Gamma_s}|\nabla r|}.
%$$
Using the coarea's formula and the transversality lemma, for each $0 \le a<b$
\begin{equation}\label{radialize}
\mu_k(A_{a,b}) = \int_{A_{a,b}} \frac{1}{v_k(r)} = \int_{a}^b J\big[1+T\big], \qquad \int_{A_{a,b}} \frac{1-|\nabla r|^2}{v_k(r)} = \int_{a}^b JT,
\end{equation}
where $J$ and $T$ are defined, respectively, in \eqref{def_J} and \eqref{def_Is}. Summarizing, in view of \eqref{simplify!} and \eqref{radialize} we deduce from \eqref{weelldone!} the following inequalities:
\begin{equation}\label{verygood}
\begin{array}{lcl}
\disp\frac{\|\Delta u_{t,s} + \lambda u_{t,s}\|^2_2}{\|u_{t,s}\|_2^2} & \le & \disp C \left( \frac{1}{\int_t^{s} J\big[1+T\big]}\left[ F(t)\int_{t-1}^SJ\big[1+T\big] + \int_{t-1}^S JT\right]\right.\\[0.5cm]
& & \disp \left. + \frac{\int_{s}^S J\big[1+T\big]}{(S-s)^2\int_{t}^{s} J\big[1+T\big]} + \frac{\int_{t-1}^t J\big[1+T\big]}{\int_t^{s} J\big[1+T\big]} \right) \doteq \mathcal{Q}(t,s).
\end{array}
\end{equation}
If we can guarantee that 
\begin{equation}\label{liminfcond}
\liminf_{t \ra +\infty} \liminf_{s \ra +\infty} \frac{\|\Delta u_{t,s} + \lambda u_{t,s}\|^2_2}{\|u_{t,s}\|_2^2} = 0,
\end{equation}
then we are able to construct a sequence of approximating eigenfunctions for $\lambda$ as follows: fix $\eps>0$. By \eqref{liminfcond} there exists a divergent sequence $\{t_i\}$ such that, for $i \ge i_\eps$, 
$$
\liminf_{s \ra +\infty} \frac{\|\Delta u_{t_i,s} + \lambda u_{t_i,s}\|^2_2}{\|u_{t_i,s}\|_2^2} < \eps/2.
$$
For $i=i_\eps$, pick then a sequence $\{s_j\}$ realizing the liminf. For $j \ge j_\eps(i_\eps,\eps)$ 
\begin{equation}\label{relautisj}
\|\Delta u_{t_i,s_j} + \lambda u_{t_i,s_j}\|^2_2 < \eps \|u_{t_i,s_j}\|_2^2, 
\end{equation}
Writing $u_\eps \doteq u_{t_{i_\eps},s_{j_\eps}}$, by \eqref{relautisj} from the set $\{u_\eps\}$ we can extract a sequence of approximating eigenfunctions for $\lambda$, concluding the proof that $\lambda \in \sigma(M)$. To show \eqref{liminfcond}, by \eqref{verygood} it is enough to prove that 
\begin{equation}\label{desiredliminfs}
\liminf_{t \ra +\infty} \liminf_{s \ra +\infty} \mathcal{Q}(t,s) = 0.
\end{equation}
Suppose, by contradiction, that \eqref{desiredliminfs} were not true. Then, there exists a constant $\delta>0$ such that, for each $t \ge t_\delta$, $\liminf_{s \ra +\infty} \mathcal{Q}(t,s) \ge 2\delta$, and thus for $t \ge t_\delta$ and $s \ge s_\delta(t)$ 
\begin{equation}\label{contraddi}
F(t)\int_{t-1}^SJ\big[1+T\big] + \int_{t-1}^S JT  + \int_{s}^S \frac{J\big[1+T\big]}{(S-s)^2} + \int_{t-1}^t J\big[1+T\big] \ge \delta \int_t^{s} J\big[1+T\big],
\end{equation}
and rearranging
\begin{equation}\label{firstrelation_3}
(F(t)+1)\int_{t-1}^SJ\big[1+T\big] - \int_{t-1}^S J + \int_{s}^S\frac{J\big[1+T\big]}{(S-s)^2} + \int_{t-1}^t J\big[1+T\big] \ge \delta \int_t^{s} J\big[1+T\big].
\end{equation}
We rewrite the above integrals in order to make $\Theta(s)$ appear. Integrating by parts and using again the coarea's formula and the transversality lemma,
\begin{equation}\label{bellaidentita}
\begin{array}{lcl}
\disp \int_a^b J\big[1+T\big] & = & \disp \int_{A_{a,b}} \frac{1}{v_k(r)} = \disp \int_a^b \frac{1}{v_k(\sigma)} \left[\int_{\Gamma_\sigma} \frac{1}{|\nabla r|}\right]\di \sigma = \int_a^b \frac{\big(V_k(\sigma)\Theta(\sigma)\big)'}{v_k(\sigma)}\di \sigma \\[0.5cm]
& = & \disp \frac{V_k(b)}{v_k(b)} \Theta(b)- \frac{V_k(a)}{v_k(a)} \Theta(a)+ \int_{a}^{b} \frac{V_k v_k'}{v_k^2} \Theta.
\end{array}
\end{equation}
To deal with the term containing the integral of $J$ alone in \eqref{firstrelation_3}, we use the inequality $J(s) \ge \Theta(s)$ coming from the monotonicity formulae in Proposition \ref{prop_monotonicity}. This passage is crucial for us to conclude. Inserting \eqref{bellaidentita} and $J \ge \Theta$ into \eqref{firstrelation_3} we get
\begin{equation}\label{firstrelation_5}
\begin{array}{l}
\disp (F(t)+1)\disp \frac{V_k(S)}{v_k(S)} \Theta(S)- (F(t)+1)\frac{V_k(t-1)}{v_k(t-1)} \Theta(t-1) + (F(t)+1)\int_{t-1}^{S} \frac{V_k v_k'}{v_k^2} \Theta \\[0.5cm]
\disp - \int_{t-1}^S \Theta + \disp \frac{1}{(S-s)^2}\left[\frac{V_k(S)}{v_k(S)} \Theta(S)- \frac{V_k(s)}{v_k(s)} \Theta(s)+ \int_{s}^{S} \frac{V_k v_k'}{v_k^2} \Theta\right] + \frac{V_k(t)}{v_k(t)} \Theta(t)\\[0.5cm]
\disp - \frac{V_k(t-1)}{v_k(t-1)} \Theta(t-1) + \int_{t-1}^{t} \frac{V_k v_k'}{v_k^2} \Theta \\[0.5cm]
\qquad \qquad \qquad \ge \quad \delta \disp \frac{V_k(s)}{v_k(s)} \Theta(s)- \delta\frac{V_k(t)}{v_k(t)} \Theta(t)+ \delta\int_{t}^{s} \frac{V_k v_k'}{v_k^2} \Theta.
\end{array}
\end{equation}
The idea to reach the desired contradiction is to prove that, as a consequence of \eqref{firstrelation_5}, 
\begin{equation}\label{inttheyta}
\int_{t-1}^S \Theta
\end{equation}
(hence, $\Theta(S)$) must grow faster as $S \ra +\infty$ than the bound in \eqref{bellissima}. To do so, we need to simplify \eqref{firstrelation_5} in order to find a suitable differential inequality for \eqref{inttheyta}.\\
We first observe that, both for $k>0$ and for $k=0$, there exists an absolute constant $\hat c$ such that $\hat c^{-1} \le V_kv_k'/v_k^2 \le \hat c$ on $[1,+\infty)$. Furthermore, by the monotonicity of $\Theta$, 
\begin{equation}\label{stimasemplice!}
\int_{s}^{S} \frac{V_k v_k'}{v_k^2} \Theta \le \hat c(S-s) \Theta(S).
\end{equation}
Next, we deal with the two terms in the left-hand side of \eqref{firstrelation_5} that involve \eqref{inttheyta}:
$$
\begin{array}{lcl}
\disp (F(t)+1)\int_{t-1}^{S} \frac{V_k v_k'}{v_k^2} \Theta  - \int_{t-1}^S \Theta & = & \disp F(t)\int_{t-1}^{S} \frac{V_k v_k'}{v_k^2} \Theta + \int_{t-1}^S \frac{V_k v_k'-v_k^2}{v_k^2} \Theta \\[0.5cm]
& \le & \disp \hat c F(t)\int_{t-1}^{S}\Theta + \int_{t-1}^S \frac{V_k v_k'-v_k^2}{v_k^2} \Theta.
\end{array}
$$
The key point is the following relation:
\begin{equation}\label{maggica}
\frac{V_k(s) v_k'(s)-v_k(s)^2}{v_k(s)^2} \left\{ \begin{array}{ll}
= -1/m & \quad \text{if } \, k =0; \\[0.2cm]
\ra 0 \ \text{as } \, s \ra +\infty, & \quad \text{if } \, k>0.
\end{array}\right.
\end{equation}
Define
$$
\omega(t) \doteq \sup_{[t-1,+\infty)} \frac{V_k v_k'-v_k^2}{v_k^2}, \qquad \chi(t) \doteq \hat c F(t) + \omega(t).
$$
Again by the monotonicity of $\Theta$,
\begin{equation}\label{cisiamoallafine!!}
\begin{array}{lcl}
\disp (F(t)+1)\int_{t-1}^{S} \frac{V_k v_k'}{v_k^2} \Theta  - \int_{t-1}^S \Theta & \le & \disp \big[\hat c F(t)+\omega(t)\big]\int_{t-1}^{S}\Theta = \chi(t)\int_{t-1}^{S}\Theta \\[0.3cm]
& \le & \disp \chi(t)\Theta(t) + \chi(t)\int_{t}^{S}\Theta.
\end{array}
\end{equation}
For simplicity, hereafter we collect all the terms independent of $s$ in a function that we call $h(t)$, which may vary from line to line. Inserting \eqref{stimasemplice!} and \eqref{cisiamoallafine!!} into \eqref{firstrelation_5} we infer 
\begin{equation}
\begin{array}{l}
\disp \left[\left(F(t)+1+ \frac{1}{(S-s)^2}\right)\frac{V_k(S)}{v_k(S)} + \frac{\hat c}{S-s}\right]\disp \Theta(S) + \chi(t)\int_{t}^{S}\Theta \\[0.5cm]
\disp \ge h(t) + \left(\delta + \frac{1}{(S-s)^2}\right) \frac{V_k(s)}{v_k(s)}\Theta(s) + \delta\hat c^{-1}\int_{t}^{s} \Theta.
\end{array}
\end{equation}
Summing $\delta \hat{c}^{-1}(S-s)\Theta(S)$ to the two sides of the above inequality, using the monotonicity of $\Theta$ and getting rid of the term containing $\Theta(s)$ we obtain 
\begin{equation}\label{firstrelation_7}
\begin{array}{l}
\disp \left[\left(F(t)+1+ \frac{1}{(S-s)^2}\right)\frac{V_k(S)}{v_k(S)} + \frac{\hat c}{S-s} + \delta \hat 
c^{-1}(S-s)\right]\disp \Theta(S) + \chi(t)\int_{t}^{S}\Theta \\[0.4cm]
\disp \ge h(t)  + \delta\hat c^{-1}\int_{t}^{S} \Theta.
\end{array}
\end{equation}
%Using that $V_k/v_k$ is bounded from below on $[1,+\infty)$ and diverges when $k=0$, and taking into account the expression of $\chi(t), \omega(t)$, property \eqref{maggica} and the fact that $F(t) \ra 0$ as $t \ra +\infty$, we can choose $t\ge t_c$ sufficiently large in such a way that the following inequalities are satisfied  for $t \ge t_c$ and $s \ge s_c(t)$:
%$$
%\begin{array}{l}
%\disp \chi(t) < \frac{c \hat c^{-1}}{2}; \\[0.2cm]
%\disp (F(t)+2)\frac{V_k(s)}{v_k(s)} + \hat c +\chi(t) + \frac{c \hat c^{-1}}{2} \le \bar c \frac{V_k(s)}{v_k(s)},
%\end{array}
%$$
%for some suitable $\bar c>0$ independent of $t,s$.
Using \eqref{maggica}, the definition of $\chi(t)$ and the properties of $\omega(t),F(t)$, we can choose $t_\delta$ sufficiently large to guarantee that
\begin{equation}\label{defck}
\delta\hat c^{-1} - \chi(t) \ge c_k \doteq \left\{ \begin{array}{ll} \frac{1}{m} + \frac{\delta \hat c^{-1}}{2} & \quad \text{if } \, k=0, \\[0.2cm] 
\frac{\delta \hat c^{-1}}{2} & \quad \text{if } \, k>0,
\end{array}\right.
\end{equation}
hence
\begin{equation}\label{firstrelation_9}
\disp \left[\left(F(t)+1+ \frac{1}{(S-s)^2}\right)\frac{V_k(S)}{v_k(S)} + \frac{\hat c}{S-s} + \delta \hat 
c^{-1}(S-s)\right]\disp \Theta(S) \ge h(t)  + c_k\int_{t}^{S} \Theta.
\end{equation}
We now specify $S(s)$ depending on whether $k>0$ or $k=0$.\\[0.2cm]
\noindent \emph{The case $k>0$.}\\
We choose $S \doteq s+1$. In view of the fact that $V_k/v_k$ is bounded above on $\R^+$, \eqref{firstrelation_9} becomes
\begin{equation}\label{fine_iperbolica}
\bar c\Theta(s+1) \ge h(t)  + c_k\int_{t}^{s+1} \Theta \ge \frac{c_k}{2}\int_{t}^{s+1} \Theta,
\end{equation}
for some $\bar c$ independent of $t,s$. Note that the last inequality is satisfied provided $s \ge s_\delta(t)$ is chosen to be sufficiently large, since the monotonicity of $\Theta$ implies that $\Theta \not \in L^1(\R^+)$. Integrating and using again the monotonicity of $\Theta$, we get 
$$
(s+1-t)\Theta(s+1) \ge \int_t^{s+1}\Theta \ge \left[\int_{t}^{s_0+1}\Theta\right] \exp\left\{ \frac{c_k}{2\bar c}(s-s_0)\right\},
$$
hence $\Theta(s)$ grows exponentially. Ultimately, this contradicts our assumption \eqref{bellissima}.\\[0.2cm]
\noindent \emph{The case $k=0$.}\\ 
We choose $S \doteq s + \sqrt{s}$. Since $V_k(S)/v_k(S) = S/m$, from \eqref{firstrelation_9} we infer 
\begin{equation}\label{firstrelation_25}
\disp \left[\left(F(t)+1+ \frac{1}{s}\right)\frac{S}{m} + \frac{\hat c}{\sqrt{s}} + \delta \hat 
c^{-1}\sqrt{s}\right]\disp \Theta(S) \ge h(t)  + c_k\int_{t}^{S} \Theta.
\end{equation}
Using the expression of $c_k$ and the fact that $F(t) \ra 0$, up to choosing $t_\delta$ and then $s_\delta(t)$ large enough we can ensure the validity of the following inequality:
$$
\left[\left(F(t)+1+ \frac{1}{s}\right)\frac{S}{m} + \frac{\hat c}{\sqrt{s}} + \delta \hat 
c^{-1}\sqrt{s}\right] < \left[\frac{1}{m} + \frac{\delta \hat c^{-1}}{4}\right]S = \left[c_k - \frac{\delta \hat c^{-1}}{4}\right]S
$$
for $t \ge t_\delta$ and $s \ge s_\delta(t)$. Plugging into \eqref{firstrelation_9}, and using that $\Theta \not \in L^1(\R^+)$,  
$$
S\Theta(S) \ge h(t) + \frac{c_k}{c_k- \delta \hat{c}^{-1}/4} \int_t^S\Theta \ge (1+\eps)\int_t^S\Theta,  
$$
for a suitable $\eps>0$ independent of $t,S$, and provided that $S \ge s_\delta(t)$ is large enough. Integrating and using again the monotonicity of $\Theta$, 
$$
S\Theta(S) \ge (S-t)\Theta(S) \ge \int_t^S\Theta \ge \left[\int_t^{S_0}\Theta\right]\left(\frac{S}{S_0}\right)^{1+\eps}, 
$$
hence $\Theta(S)$ grows polynomially at least with power $\eps$, contradicting \eqref{bellissima}.\\
Concluding, both for $k>0$ and for $k=0$ assuming \eqref{contraddi} leads to a contradiction with our assumption \eqref{bellissima}, hence \eqref{liminfcond} holds, as required.

\section{Proof of Theorem 2}
We first show that $\varphi$ is proper and that $M$ is diffeomorphic to the interior of a compact manifold with boundary. Both the properties are consequence of the following lemma due to \cite{BC}, which improves on \cite{Anderson_prep}, \cite{filho}, \cite{castillon}, \cite{BJM}.

\begin{lemma}\label{lem_proper}
Let $\varphi : M^m \ra N^n$ be an immersed submanifold into an ambient manifold $N$ with a pole and suppose that $N$ satisfies \eqref{pinchsectio} for some $k \ge 0$. 
Denote by $B_s = \{x \in M;\; \rho(x) \le s\}$  the intrinsic ball on $M$. Assume that
\begin{equation}\label{assu_II}
\begin{array}{rll}
(i) &\quad \disp \limsup_{s \ra +\infty} s\|\II\|_{L^\infty(\partial B_s)} < 1 & \quad \text{if } \, k = 0 \text{ in \eqref{pinchsectio}, or} \\[0.4cm] 
(ii) & \quad \disp \limsup_{s \ra +\infty} \|\II\|_{L^\infty(\partial B_s)} < \sqrt{k} & \quad \text{if } \, k > 0 \text{ in \eqref{pinchsectio}}.
\end{array}
\end{equation}

Then, $\varphi$ is proper and there exists $R>0$ such that $|\nabla r|>0$ on $\{r \ge R\}$, where $r$ is the extrinsic distance function. Consequently, the flow
\begin{equation}\label{def_flow}
\Phi : \R^+ \times \{r=R\} \ra \{r \ge R\}, \qquad \frac{\di}{\di s}\Phi_s(x) = \frac{\nabla r}{|\nabla r|^2}\big(\Phi_s(x)\big)
\end{equation}
is well defined, and $M$ is diffeomorphic to the interior of a compact manifold with boundary.
\end{lemma}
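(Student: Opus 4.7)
The plan is to construct a strictly convex exhaustion $\eta:M\to\mathbb{R}$ out of the extrinsic distance $r$, and extract both the properness of $\varphi$ and the non-vanishing of $\nabla r$ at infinity from its Hessian. A natural choice is $\eta=F\circ r$ with
$$
F(t)=\begin{cases} t^2/2 & \text{if } k=0,\\ \cosh(\sqrt{k}\,t)/k & \text{if } k>0.\end{cases}
$$
The identity $F''=F'\cdot \sn_k'/\sn_k$ is designed precisely so that, when one combines the Gauss formula
$$
\Hess r(X,X)=\overline{\Hess}\,\bar\rho(\di\varphi X,\di\varphi X)+\langle \bar\nabla\bar\rho,\II(X,X)\rangle
$$
with the lower bound in \eqref{hessiancomp}, the non-radial term telescopes. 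A short computation then yields, for every unit $X\in T_xM$,
$$
\Hess\eta(X,X)\ \ge\ F'(r(x))\left[\frac{\sn_k'(r(x))}{\sn_k(r(x))}-|\II(x)|\right].
$$
Since $r\le\rho$ pointwise and $\sn_k'(t)/\sn_k(t)=1/t$ (for $k=0$) or $>\sqrt{k}$ (for $k>0$), the decay hypothesis \eqref{assu_II} produces constants $c>0$ and $R_0>0$ with $\Hess\eta\ge c\metric$ on $\{\rho\ge R_0\}$.

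Properness then follows by ODE comparison along minimizing geodesics. For any divergent $\{x_n\}\subset M$ one has $\rho(x_n)\to\infty$ by completeness; picking unit-speed minimizing geodesics $\gamma_n:[0,\rho(x_n)]\to M$ from $o$ to $x_n$, one gets $(\eta\circ\gamma_n)''(s)\ge c$ for $s\in[R_0,\rho(x_n)]$, and two integrations force $\eta(x_n)\to\infty$, hence $r(x_n)\to\infty$. To rule out critical points of $r$ at infinity, I would next observe that at any $x_0\in\{\rho\ge R_0\}$ with $\nabla r(x_0)=0$ the Hessian bound makes $x_0$ a strict local minimum of the smooth proper function $\eta$, hence an isolated critical point; two such minima in the same connected component of $\{\rho\ge R_0\}$ joined by a minimizing geodesic in $M$ would give a strictly convex $\eta$-profile with vanishing derivative at both endpoints, impossible. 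Enlarging $R_0$ to absorb the finitely many surviving critical values into a larger compact set then delivers $|\nabla r|>0$ on $\{r\ge R\}$ for some $R>0$.

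Once these two facts are in place, the remaining assertions are routine. By properness $\Gamma_R=\{r=R\}$ is a smooth compact hypersurface of $M$, the vector field $\nabla r/|\nabla r|^2$ is smooth on $\{r\ge R\}$ and complete there (because $r\circ\Phi_s=r+s$ so trajectories cannot escape in finite time), hence the flow \eqref{def_flow} is well defined and implements a diffeomorphism $\Gamma_R\times[0,+\infty)\to\{r\ge R\}$; glued to the compact set $\{r\le R\}$, this exhibits $M$ as the interior of a compact manifold with boundary. The main technical obstacle I anticipate is making the Hessian lower bound truly uniform in the case $k>0$: the factor $F'(r)$ degenerates where $r$ is small, so one has to rule out sequences along which $\rho\to\infty$ while $r\to 0$ before the convex-exhaustion argument applies uniformly. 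Handling this, together with the end-by-end version of the minimizing-geodesic argument used to kill critical points, is exactly where the refinement of \cite{BC} improves on the earlier statements in \cite{Anderson_prep,filho,castillon,BJM}.
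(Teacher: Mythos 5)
The paper does not prove this lemma; it cites \cite{BC} (improving \cite{Anderson_prep}, \cite{filho}, \cite{castillon}, \cite{BJM}) and uses the statement as a black box. So there is no in-paper proof to compare against, and I will assess your argument on its own terms.

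Your choice $\eta = F\circ r$ with $F'=\sn_k$ is the right device, and the Hessian computation is correct: combining the lower bound in \eqref{hessiancomp} with the Gauss formula, for unit $X\in T_xM$,
\begin{equation*}
\Hess\eta(X,X)\ \ge\ F''(r)\langle\nabla r,X\rangle^2 + F'(r)\Big[\tfrac{\sn_k'(r)}{\sn_k(r)}\big(1-\langle\nabla r,X\rangle^2\big)-|\II|\Big]\ =\ \sn_k'(r)-\sn_k(r)\,|\II|,
\end{equation*}
precisely because $F''=F'\,\sn_k'/\sn_k$. However, the ``technical obstacle'' you anticipate for $k>0$ is not actually there: you worried that $F'(r)=\sn_k(r)\to 0$ near $r=0$, but the product $F'(r)\,\sn_k'(r)/\sn_k(r)=\sn_k'(r)=\cosh(\sqrt{k}\,r)\ge 1$ does not degenerate, and since $\sn_k(r)\sqrt{k}=\sinh(\sqrt{k}\,r)\le\sn_k'(r)$ one gets $\Hess\eta\ge\sn_k'(r)\big(1-|\II|/\sqrt{k}\big)\ge\delta>0$ uniformly once $\|\II\|_{L^\infty(\partial B_s)}<\sqrt{k}(1-\delta)$ for $s\ge R_0$. (For $k=0$ one gets $\Hess\eta\ge 1-r|\II|$, and since $r\le r(o)+\rho$ rather than $r\le\rho$, hypothesis $(i)$ still yields a uniform positive lower bound for $\rho\ge R_0$.) So the bound $\Hess\eta\ge c>0$ on $\{\rho\ge R_0\}$ is sound, properness follows exactly as you argue by integrating twice along minimizing geodesics from $o$ (which satisfy $\rho(\gamma(s))=s$, hence stay in the good region), and the flow argument at the end is routine once $|\nabla r|>0$ is known.

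The genuine gap is in your treatment of critical points. The ``two strict local minima joined by a minimizing geodesic'' argument does not work as stated: a minimizing geodesic between two points of $\{\rho\ge R_0\}$ may well dip into $\{\rho<R_0\}$, where the convexity bound fails, so you cannot conclude anything from the endpoints alone; and even if it did stay inside, the argument would only show uniqueness, not absence, of critical points in $\{r\ge R\}$, so the final claim that one can ``enlarge $R_0$ to absorb the finitely many surviving critical values'' is not justified without a cap on their number. The clean fix, staying entirely within your framework, is to use minimizing geodesics from $o$ once more: for unit-speed $\gamma$ from $o$ to $x$, $(\eta\circ\gamma)''\ge c$ on $[R_0,\rho(x)]$, hence $(\eta\circ\gamma)'(\rho(x))\ge(\eta\circ\gamma)'(R_0)+c(\rho(x)-R_0)$. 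Since $(\eta\circ\gamma)'(R_0)\ge -K$ with $K:=\max_{\{\rho=R_0\}}|\nabla\eta|<\infty$, one gets $(\eta\circ\gamma)'(\rho(x))>0$, i.e.\ $\langle\nabla\eta(x),\gamma'(\rho(x))\rangle>0$ and in particular $\nabla r(x)\ne 0$, as soon as $\rho(x)>R_0+K/c$. Choosing $R>\sup_{\{\rho\le R_0+K/c\}}r$ (finite by compactness of intrinsic balls) gives $\{r\ge R\}\subset\{\rho>R_0+K/c\}$, hence $|\nabla r|>0$ on $\{r\ge R\}$ with no cardinality considerations needed.
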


The properness of $\varphi$ enables us to apply Proposition \ref{prop_equivalence}. Therefore, to show that $\Theta(+\infty)<+\infty$ it is enough to check that 
\begin{equation}\label{rapido}
\frac{\sn_k'(s)}{\sn_k(s)} \frac{\int_{\Gamma_s} \big[|\nabla r|^{-1} - |\nabla r|\big]}{ \int_{\Gamma_s}|\nabla r|} \in L^1(+\infty).
\end{equation}

To achieve \eqref{rapido}, we need to bound from above the rate of approaching of $|\nabla r|$ to $1$ along the flow $\Phi$ in Lemma \ref{lem_proper}. We begin with the following

\begin{lemma}\label{lem_computations}
Suppose that $N$ has a pole and radial sectional curvature satisfying \eqref{pinchsectio}, and that $\varphi: M^m \ra N^n$ is a proper minimal immersion such that $|\nabla r|>0$ outside of some compact set $\{r \le R\}$. Let $\Phi$ denote the flow of $\nabla r/|\nabla r|^2$ as in \eqref{def_flow} and let $\gamma : [R, +\infty) \ra M$ be a flow line starting from some $x_0 \in \{r=R\}$. Then, along $\gamma$, 
\begin{equation}\label{belleequation}
\disp \frac{\di}{\di s}\big( \sn_k(r)\sqrt{1-|\nabla r|^2}\big) \le \disp \sn_k(r) |\II(\gamma(s))|
\end{equation}
%In particular, for each $s>t \ge R$,
%\begin{equation}\label{upperbound}
%1-|\nabla r(\gamma(s))|^2 \le 2\left[\frac{\sn_k(t)}{\sn_k(s)}\right]^2 + \frac{2}{\sn_k(s)^2}\left[ \int_t^s |\II(\gamma(\sigma))|\sn_k(\sigma)\di \sigma \right]^2.
%\end{equation}
\end{lemma}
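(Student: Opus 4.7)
The plan is to differentiate the quantity $F(s) \doteq \sn_k(r)\sqrt{1-|\nabla r|^2}$ along $\gamma$ and use the Gauss formula plus the Hessian comparison theorem to control the result.

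First, since $\dot\gamma = \nabla r / |\nabla r|^2$, we have $\frac{d}{ds} r(\gamma(s)) = \langle \nabla r, \dot\gamma\rangle = 1$, so $r(\gamma(s))$ increases at unit speed, and
$$
\frac{d}{ds}|\nabla r|^2 = 2\Hess r(\nabla r, \dot\gamma) = \frac{2\Hess r(\nabla r, \nabla r)}{|\nabla r|^2}.
$$
Writing $w \doteq 1 - |\nabla r|^2$, a direct computation gives
$$
\frac{d}{ds}F(s) = \sn_k'(r)\sqrt{w} - \frac{\sn_k(r)}{|\nabla r|^2 \sqrt{w}}\,\Hess r(\nabla r,\nabla r).
$$
So it suffices to bound $\Hess r(\nabla r,\nabla r)$ \emph{from below}.

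Next, I would decompose $\bar\nabla\bar\rho = e^T + e^\perp$ along $\varphi(M)$, where $e^T$ is tangent and $e^\perp$ normal to $M$. Under the usual identification, $d\varphi(\nabla r) = e^T$, so $|e^\perp|^2 = 1 - |\nabla r|^2 = w$. The Gauss formula applied to $r = \bar\rho \circ \varphi$ yields
$$
\Hess r(\nabla r,\nabla r) = \overline{\Hess}\,\bar\rho(e^T, e^T) + \langle \II(\nabla r,\nabla r), e^\perp\rangle.
$$
Since $|\bar\nabla\bar\rho|\equiv 1$ forces $\overline{\Hess}\,\bar\rho(\bar\nabla\bar\rho,\cdot)=0$, and $e^T = \bar\nabla\bar\rho - e^\perp$, we get the pleasant identity $\overline{\Hess}\,\bar\rho(e^T,e^T) = \overline{\Hess}\,\bar\rho(e^\perp,e^\perp)$. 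Combining this with the lower bound from the Hessian comparison theorem \eqref{hessiancomp} gives
$$
\overline{\Hess}\,\bar\rho(e^T,e^T) \ge \frac{\sn_k'(r)}{\sn_k(r)}|e^\perp|^2 = \frac{\sn_k'(r)}{\sn_k(r)}\,w,
$$
while the second fundamental form term is controlled by Cauchy--Schwarz:
$$
\big|\langle \II(\nabla r,\nabla r), e^\perp\rangle\big| \le |\II|\,|\nabla r|^2\sqrt{w}.
$$

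Putting these two estimates into the expression for $\frac{d}{ds}F$ yields, after simplification,
$$
\frac{d}{ds}F(s) \le \sn_k'(r)\sqrt{w}\left(1 - \frac{1}{|\nabla r|^2}\right) + \sn_k(r)|\II(\gamma(s))|.
$$
The first term on the right-hand side is non-positive because $|\nabla r|\le 1$, so dropping it gives exactly \eqref{belleequation}. The only delicate point in the argument is keeping track of the decomposition $\bar\nabla\bar\rho = e^T + e^\perp$ and justifying the identity $\overline{\Hess}\,\bar\rho(e^T,e^T) = \overline{\Hess}\,\bar\rho(e^\perp,e^\perp)$; once this is in place, everything else is bookkeeping plus a single application of the Hessian comparison.
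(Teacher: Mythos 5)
Your overall route is essentially the same as the paper's: differentiate the quantity along the flow using $\frac{d}{ds}|\nabla r|^2 = \frac{2}{|\nabla r|^2}\Hess r(\nabla r,\nabla r)$, split $\Hess r$ via the Gauss formula into an ambient Hessian piece plus a $\langle \II,\bar\nabla^\perp\bar\rho\rangle$ piece, then use Hessian comparison and Cauchy--Schwarz. (The paper works with $\sn_k^2(r)(1-|\nabla r|^2)$ and divides out at the end, which is cosmetically different from differentiating its square root.)

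However, there is a factor error in the Hessian comparison step. You wrote $\overline{\Hess}\,\bar\rho(e^T,e^T) = \overline{\Hess}\,\bar\rho(e^\perp,e^\perp) \ge \frac{\sn_k'(r)}{\sn_k(r)}|e^\perp|^2$, implicitly treating $e^\perp$ as if it were orthogonal to $\bar\nabla\bar\rho$. It is not: $e^\perp$ is normal to $\varphi(M)$, not to the radial direction, and in fact $\langle e^\perp,\bar\nabla\bar\rho\rangle = \langle e^\perp, e^T+e^\perp\rangle = |e^\perp|^2 = w$, which is nonzero whenever $|\nabla r| < 1$. The comparison \eqref{hessiancomp} therefore gives
$$
\overline{\Hess}\,\bar\rho(e^\perp,e^\perp) \ \ge\ \frac{\sn_k'(r)}{\sn_k(r)}\Bigl(|e^\perp|^2 - \langle e^\perp,\bar\nabla\bar\rho\rangle^2\Bigr) \ =\ \frac{\sn_k'(r)}{\sn_k(r)}\,w(1-w)\ =\ \frac{\sn_k'(r)}{\sn_k(r)}\,w\,|\nabla r|^2,
$$
which is exactly what one also gets by applying the comparison directly to $e^T$ (as the paper does), making the $e^\perp$ detour unnecessary. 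Your bound was missing the factor $|\nabla r|^2$. Note this is an \emph{overestimate} of the correct lower bound, so your claimed inequality was unjustified as stated; fortunately, substituting the corrected bound into your formula for $\frac{d}{ds}F$ makes the $\sn_k'\sqrt{w}$ contributions cancel exactly, rather than leaving a nonpositive leftover term to drop, and the conclusion \eqref{belleequation} still follows. So the proposal is fixable with a one-line correction and then matches the paper's argument.
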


\begin{proof}
Observe that $r(\gamma(s))=s-R$. By the chain rule and the Hessian comparison theorem \ref{hessiancomp}, 
$$
\begin{array}{lcl}
\disp \frac{\di}{\di s}|\nabla r|^2 &= & \disp 2\Hess r(\nabla r, \dot \gamma) = \frac{2}{|\nabla r|^2} \Hess r(\nabla r, \nabla r) \\[0.4cm]
& = & \disp \frac{2}{|\nabla r|^2} \overline{\Hess}(\bar \rho) \big(\di \varphi(\nabla r), \di \varphi(\nabla r) \big) + \frac{2}{|\nabla r|^2} \big( \bar \nabla \bar \rho, \II(\nabla r, \nabla r)\big) \\[0.4cm]
& \ge & \disp 2\frac{\sn'_k(r)}{\sn_k(r)}(1-|\nabla r|^2) - 2|\bar \nabla^\perp \bar \rho||\II|, 
\end{array}
$$
where $\bar \nabla^\perp \bar \rho$ is the component of $\bar\rho$ perpendicular to $\di \varphi(TM)$ and $|\bar \nabla^\perp \rho| = \sqrt{1-|\nabla r|^2}$. Then,
$$
\disp \frac{\di}{\di s}|\nabla r|^2 \ge \disp 2\frac{\sn'_k(r)}{\sn_k(r)}(1-|\nabla r|^2) - 2|\II|\sqrt{1-|\nabla r|^2}.  
$$
Multiplying by $\sn_k^2(r)$ gives
$$
\disp \frac{\di}{\di s}\big(\sn^2_k(r)(1-|\nabla r|^2)\big) \le 2\sn_k^2(r)|\II|\sqrt{1-|\nabla r|^2},   
$$
which implies \eqref{belleequation}. 
%Integrating on $[s,t]$ we deduce that 
%\begin{equation}
%\begin{array}{lcl}
%\disp \sn_k(s) \sqrt{1-|\nabla r(\gamma(s))|^2} & \le & \disp \sn_k(t) \sqrt{1-|\nabla r(\gamma(t))|^2} + \int_t^s |\II(\gamma(\sigma))|\sn_k(\sigma)\di \sigma \\[0.3cm]
%& \le & \disp \sn_k(t) + \int_t^s |\II(\gamma(\sigma))|\sn_k(\sigma)\di \sigma,
%\end{array}
%\end{equation}
%and \eqref{upperbound} follows by squaring and using the inequality $(a+b)^2 \le 2a^2+2b^2$.
\end{proof}

The above lemma relates the behaviour of $|\nabla r|$ to that of the second fundamental form. The next result makes this relation explicit in the two cases considered in Theorem \ref{teo_finitedens}. 
\begin{proposition}\label{prop_inte}
In the assumptions of the above proposition, suppose further that either
\begin{equation}\label{assu_IIdecay_1}
\begin{array}{rll}
(i) & \quad \disp \|\II\|_{L^\infty(\partial B_s)} \le \frac{C}{s \log^{\alpha/2} s} & \quad \text{if } \, k = 0 \text{ in \eqref{pinchsectio}, or} \\[0.4cm] 
(ii) & \quad \disp \|\II\|_{L^\infty(\partial B_s)} \le \frac{C}{\sqrt{s} \log^{\alpha/2} s} & \quad \text{if } \, k > 0 \text{ in \eqref{pinchsectio}.}
\end{array}
\end{equation}
for $s \ge 1$ and some constants $C>0$ and $\alpha>0$. Here, $\partial B_s$ is the boundary of the intrinsic ball $B_s(o)$. Then, $|\nabla r|(\gamma(s)) \ra 1$ as $s$ diverges, and if $s>2R$ and $R$ is sufficiently large,
\begin{equation}\label{integrability}
\begin{array}{ll}
\disp \text{in the case $(i)$,} & \disp \qquad 1-|\nabla r(\gamma(s))|^2 \le \frac{\hat C}{\log^\alpha s} \\[0.4cm]
\disp \text{in the case $(ii)$,} & \disp \qquad 1-|\nabla r(\gamma(s))|^2 \le \frac{\hat C}{s\log^\alpha s}
\end{array}
\end{equation}
for some constant $\hat C$ depending on $R$.
\end{proposition}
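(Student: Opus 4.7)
The plan is to integrate the differential inequality \eqref{belleequation} from Lemma \ref{lem_computations} along the flow line $\gamma$, and then convert the intrinsic decay of $|\II|$ given by \eqref{assu_IIdecay_1} into a decay along $\gamma$ by comparing intrinsic and extrinsic distances. Since $|\nabla r|>0$ on $\{r\ge R\}$ the map $s\mapsto r(\gamma(s))$ is strictly monotone, so I first reparameterize $\gamma$ by $\tau=r\circ\gamma$, rewriting \eqref{belleequation} as
\begin{equation*}
\frac{d}{d\tau}\bigl(\sn_k(\tau)\sqrt{1-|\nabla r|^2}\bigr)\le \sn_k(\tau)\,|\II(\gamma(\tau))|.
\end{equation*}

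To bound the right-hand side I exploit that $\varphi$ is an isometric immersion: any curve in $M$ joining $o$ to $\gamma(\tau)$ has the same length as its image in $N$, so by the triangle inequality in $N$,
\begin{equation*}
\rho(\gamma(\tau))\;\ge\;\overline{\dist}\bigl(\varphi(\gamma(\tau)),\varphi(o)\bigr)\;\ge\; r(\gamma(\tau))-C_0\;=\;\tau-C_0,
\end{equation*}
where $C_0=\overline{\dist}(\varphi(o),\bar o)$. For $\tau\ge 2C_0$ this yields $\rho(\gamma(\tau))\ge \tau/2$, and since $x\mapsto 1/(x\log^{\alpha/2}x)$ and $x\mapsto 1/(\sqrt{x}\log^{\alpha/2}x)$ are eventually decreasing, hypothesis \eqref{assu_IIdecay_1} gives $|\II(\gamma(\tau))|\le C'/(\tau\log^{\alpha/2}\tau)$ in case $(i)$ and $|\II(\gamma(\tau))|\le C'/(\sqrt{\tau}\log^{\alpha/2}\tau)$ in case $(ii)$, for all $\tau\ge \tau_0$ sufficiently large.

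Integrating the differential inequality from $\tau_0$ to $s$ now concludes the proof. In case $(i)$, $\sn_0(\tau)=\tau$ and the right-hand integrand becomes $C'/\log^{\alpha/2}\tau$; an integration by parts (or l'H\^opital's rule) shows $\int_{\tau_0}^s d\tau/\log^{\alpha/2}\tau = O(s/\log^{\alpha/2}s)$, so dividing by $s$ and squaring yields $1-|\nabla r(\gamma(s))|^2\le \hat C/\log^\alpha s$. In case $(ii)$, $\sn_k(\tau)\sim e^{\sqrt{k}\tau}/(2\sqrt{k})$, and integration by parts gives $\int_{\tau_0}^s e^{\sqrt{k}\tau}/(\sqrt{\tau}\log^{\alpha/2}\tau)\,d\tau\le C''\,e^{\sqrt{k}s}/(\sqrt{s}\log^{\alpha/2}s)$; dividing by $\sn_k(s)$ and squaring produces $1-|\nabla r(\gamma(s))|^2\le \hat C/(s\log^\alpha s)$. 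In either case $|\nabla r(\gamma(s))|\to 1$ follows.

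The only non-routine point is the asymptotic estimate of the two integrals above, which is a standard integration-by-parts computation; the rest is an immediate combination of Lemma \ref{lem_computations} and the intrinsic/extrinsic comparison $\rho\ge r-C_0$, with the initial-value term at $\tau_0$ absorbed into $\hat C$ upon taking $R$ sufficiently large.
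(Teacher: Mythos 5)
Your proposal is correct and follows essentially the same approach as the paper: integrate the differential inequality from Lemma \ref{lem_computations} along the flow, use the triangle inequality $\rho \ge r - r(o)$ to convert the intrinsic decay \eqref{assu_IIdecay_1} into an extrinsic one, and then estimate the resulting integral. The only cosmetic difference is that the paper avoids your integration-by-parts/l'H\^opital step by directly bounding $\sn_k(s)|\II(\gamma(s))|$ by $C_2\,\frac{\di}{\di s}\bigl(s/\log^{\alpha/2}s\bigr)$ (respectively $C_2\,\frac{\di}{\di s}\bigl(\sn_k(s)/(\sqrt{s}\log^{\alpha/2}s)\bigr)$), making the integration immediate; this is the same estimate in a slightly more packaged form.
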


\begin{proof}
We begin by observing that, in \eqref{assu_IIdecay_1}, $\partial B_s$ can be replaced by $\Gamma_s$. Indeed, since $r(x) \le r(o) + \rho(x)$, we can choose $R$ large enough depending on $r(o),\alpha$ in such a way that, for instance in $(i)$,
$$
|\II(x)| \le \frac{C}{\rho(x)\log^{\alpha/2}\rho(x)} \le \frac{C_1}{r(x)\log^{\alpha/2}r(x)}
$$
for some absolute $C_1$ and for each $r\ge R$. Thus, from $(i)$ and $(ii)$ we infer the bounds
\begin{equation}\label{better}
\|\II\|_{L^\infty(\Gamma_s)} \le \frac{C_1}{s \log^{\alpha/2} s} \quad \text{for } \, (i), \qquad \|\II\|_{L^\infty(\Gamma_s)} \le \frac{C_1}{\sqrt{s} \log^{\alpha/2} s} \quad \text{for } \, (ii).
\end{equation}
Because of \eqref{better}, up to enlarging $R$ further there exists a uniform constant $C_2>0$ such that, on $[R, +\infty)$, 
\begin{equation}\label{realnalaysis}
\sn_k(s)|\II(\gamma(s))| \le \left\{ \begin{array}{ll} \disp \frac{C_1}{\log^{\alpha/2}s} \le C_2 \frac{\di}{\di s}\left(\frac{s}{\log^{\alpha/2}s}\right) & \quad \text{if } \, k=0;\\[0.5cm]
\disp \frac{C_1 \sn_k(s)}{\sqrt{s} \log^{\alpha/2}s} \le C_2 \frac{\di}{\di s}\left(\frac{\sn_k(s)}{\sqrt{s} \log^{\alpha/2}s}\right) & \quad \text{if } \, k>0.
\end{array}\right.
\end{equation}
Integrating on $[R,s]$ and using \eqref{belleequation} we get
$$
\sqrt{1-|\nabla r(\gamma(s))|^2} \le \left\{ \begin{array}{ll}
\disp \frac{C_3(R)}{s} + \frac{C_4}{\log^{\alpha/2} s} \le \frac{C_5}{\log^{\alpha/2}s} & \quad \text{if } \, k=0, \\[0.5cm]
\disp \frac{C_3(R)}{\sn_k(s)} + \frac{C_4}{\sqrt{s}\log^{\alpha/2} s} \le \frac{C_5}{\sqrt{s}\log^{\alpha/2}s} & \quad \text{if } \, k>0,
\end{array}\right.
$$
for some absolute constants $C_4,C_5>0$ and if $s > 2R$ and $R$ is large enough. The desired \eqref{integrability} follows by taking squares.

\end{proof}

We are now ready to conclude the proof of Theorem \ref{teo_finitedens} by showing that $M$ has finite density or, equivalently, that \eqref{rapido} holds.\par

%\begin{proposition}\label{prop_finitedens}
%Suppose that $N$ has a pole and an integral pinching towards $\R^n$ or $\HH^n_k$. Let $\varphi : M^m \ra N^n$ be a minimal submanifold whose second fundamental form $\II$ decays as in \eqref{assu_IIdecay_1}, for some $\alpha>1$. Then, $M$ is proper, the extrinsic distance $r$ has no critical points outside some compact set, and $\Theta(+\infty)< +\infty$.
%\end{proposition}
%
Let $\eta(s)$ be either
\begin{equation}\label{def_eta}
\frac{1}{\log^\alpha s} \ \, \text{ when } k=0, \text{ or } \, \frac{1}{s\log^\alpha s} \ \, \text{ when } k>0,
\end{equation}
where $\alpha>1$ and $C$ is a large constant. In our assumptions, we can apply Lemma \ref{lem_computations} and Proposition \ref{prop_inte} to deduce, according to \eqref{integrability}, that, for large enough $R$,
$$
1-|\nabla r(\gamma(s))|^2 \le C\eta(s) \qquad \text{on } \, (R, +\infty), 
$$
where $\gamma(s)$ is a flow curve of $\Phi$ in \eqref{def_flow} and $C=C(R)$ is a large constant. In particular, $|\nabla r(\gamma(s))| \ra 1$ as $s \ra +\infty$. We therefore deduce the existence of a constant $C_2(R)>0$ such that 
$$
\frac{\sn_k'(s)}{\sn_k(s)} \frac{\int_{\Gamma_s} \big[|\nabla r|^{-1} - |\nabla r|\big]}{ \int_{\Gamma_s}|\nabla r|} \le C\frac{\sn_k'(s)}{\sn_k(s)}\eta(s) \frac{\int_{\Gamma_s} |\nabla r|^{-1}}{\int_{\Gamma_s}|\nabla r|} \le C_2\frac{\sn_k'(s)}{\sn_k(s)} \eta(s).
$$
In both our cases $k=0$ and $k>0$, since $\alpha >1$ it is immediate to check that $\sn_k'\eta/\sn_k \in L^1(+\infty)$, proving \eqref{rapido}.

\section*{Appendix 1: finite total curvature solutions of Plateau's problem}
In this appendix, we show that (smooth) solutions of Plateau's problem at infinity $M^m \ra \HH^n$ have finite total curvature whenever $M$ is a hypersurface and the boundary datum $\Sigma \subset \partial_\infty \HH^n$ is sufficiently regular. Consider the Poincar\'e model of $\HH^n$, and let $M \ra \HH^n$ be a proper minimal submanifold. We say that $M$ is $C^{k,\alpha}$ up to $\partial_\infty \HH^n$ if its closure $\overline{M}$ in the topology of the closed unit ball $\overline{\HH^n} = \HH^n \cup \partial_\infty \HH^n$ is a $C^{k,\alpha}$-manifold with boundary. We begin with a lemma, whose proof have been suggested to the second author by L. Mazet.

\begin{lemma}\label{prop_mazet}
Let $\varphi : M^m \ra \HH^n$ be a proper minimal submanifold. If $M$ is of class $C^2$ up to $\partial_\infty \HH^n$, then $M$ has finite total curvature.
\end{lemma}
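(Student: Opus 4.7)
The plan is to work in the Poincar\'e ball model $(\Bo^n, g_H)$ of $\HH^n$, where the hyperbolic metric is conformal to the Euclidean one: $g_H = e^{2\phi}g_E$ with $e^{\phi(x)} = 2/(1-|x|^2)$. Since by hypothesis $\overline{M} \subset \overline{\Bo^n}$ is a compact $C^2$ manifold with boundary, its Euclidean second fundamental form $\II_E$, and in particular its trace-free part $\mathring{\II}_E$, extends continuously to $\overline{M}$ and is uniformly bounded in the Euclidean metric by some constant $C$. The strategy is to play the blow-up of the hyperbolic volume element near $\partial_\infty \HH^n$ against the decay of $|\II_H|_{g_H}$ that is forced by the conformal invariance of the trace-free second fundamental form.

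The key analytic input is the transformation of $\II$ under a conformal change $\tilde g = e^{2\phi}g$. A direct computation from the Koszul-formula transformation of the ambient Levi-Civita connection yields, for $X,Y \in TM$, the normal-bundle-valued identity
\begin{equation}\label{plan_conf}
\widetilde{\II}(X,Y) \;=\; \II(X,Y) \,-\, g(X,Y)\,(\bar\nabla\phi)^{\perp}.
\end{equation}
Subtracting mean curvature vectors on each side shows that $\mathring{\widetilde{\II}}$ equals $\mathring{\II}$ as a $(0,2)$-tensor with values in the normal bundle, and the elementary scaling of a doubly covariant tensor under $\tilde g = e^{2\phi}g$ gives the norm relation
$$
|\mathring{\widetilde{\II}}|_{\tilde g} \;=\; e^{-\phi}\,|\mathring{\II}|_{g}.
$$
Applying this with $g = g_E$, $\tilde g = g_H$, and using the minimality of $\varphi$, which gives $\II_H \equiv \mathring{\II}_H$, I obtain
$$
|\II_H|_{g_H} \;=\; e^{-\phi}\,|\mathring{\II}_E|_{g_E} \;\le\; Ce^{-\phi} \qquad \text{on } M.
$$

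Combining this with the conformal scaling of the $m$-dimensional volume element, $\di V_{g_H}|_M = e^{m\phi}\,\di V_{g_E}|_M$, the cancellation $e^{-m\phi}\cdot e^{m\phi} = 1$ produces
$$
\int_M |\II_H|_{g_H}^{m}\,\di V_{g_H} \;=\; \int_M |\mathring{\II}_E|_{g_E}^{m}\,\di V_{g_E} \;\le\; C^{m}\,\haus^{m}(\overline{M}) \;<\; +\infty,
$$
where the last finiteness is immediate from the compactness of $\overline{M}$ in $\overline{\Bo^n}$ and its $C^2$ regularity. The only point that requires some care is the derivation of \eqref{plan_conf} and of the norm scaling in arbitrary codimension --- in the hypersurface case both are classical, but for a normal-bundle-valued tensor one has to keep track separately of the invariance of $\mathring{\II}$ as a tensor and of the $e^{-\phi}$ picked up by the two tangential lower indices when the metric is rescaled --- and this is a routine computation rather than a genuine obstacle.
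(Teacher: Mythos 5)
Your proof is correct and follows essentially the same route as the paper's: both pass to the Euclidean metric in the Poincar\'e ball, relate the hyperbolic and Euclidean second fundamental forms by the conformal transformation law, use minimality to kill the trace term, and invoke compactness and $C^2$ regularity of $\overline{M}$ to bound the Euclidean quantity. The paper obtains $|\II_E|^2 = \lambda^{-2}|\II_H|^2 + m\,|\nabla^\perp\log\lambda|^2 \ge \lambda^{-2}|\II_H|^2$ by an explicit Darboux-frame computation, which is precisely the identity you phrase via conformal invariance of the trace-free second fundamental form, so the decisive cancellation of $e^{\pm m\phi}$ in the integrand is the same in both arguments.
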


\begin{proof}
The Euclidean metric $\overline{\metric}$ is related to the Poincar\'e metric $\metric$ by the formula
$$
\overline{\metric} = \lambda^2 \metric, \qquad \text{with} \quad \lambda = \frac{1-|x|^2}{2}.
$$
Given a proper, minimal submanifold $\varphi : (M^m,g) \ra (\HH^n, \metric)$, we associate the isometric immersion $\bar \varphi : (M, (\lambda^2 \circ \varphi) g) \ra (\HH^{n}, \overline{\metric})$, $\bar \varphi(x) \doteq \varphi(x)$. 
Fix a local Darboux frame $\{e_i, e_\alpha\}$ on $(M,g)$ for $\varphi$, with $\{e_i\}$ tangent to $M$ and $\{e_\alpha\}$ in the normal bundle, and let $\bar e_i = e_i/\lambda$, $\bar e_\alpha = e_\alpha/\lambda$ be the corresponding Darboux frame on $(M, \lambda^2g)$ for $\bar \varphi$. Let $\di V$ and $\di \bar V = \lambda^{m} \di V$ be the volume forms of $(M,g)$ and $(M, \lambda^2g)$, and denote with $h^\alpha_{ij}$ and $\bar h^\alpha_{ij}$ the coefficients of the second fundamental forms of $\varphi$ and $\bar \varphi$, respectively. A standard computation shows that 
$$
\bar h^\alpha_{ij} = \frac{1}{\lambda} h^\alpha_{ij} - \frac{\lambda_\alpha}{\lambda} \delta_{ij},
$$
where $\lambda_\alpha = e_\alpha(\lambda)$. Evaluating the norms of $\II$ and $\bar \II$, since $h^\alpha_{ij}$ is trace-free by minimality we obtain 
$$
|\bar\II|^2 = \lambda^{-2}|\II|^2 + m |\nabla^\perp \log \lambda|^2 \ge \lambda^{-2}|\II|^2, 
$$
and thus $|\bar \II|^m \di \bar V \ge |\II|^m \di V$. Integrating on $M$ it holds
$$
\int_M |\II|^m \di V \le \int_M |\bar\II|^m \di \bar V.
$$ 
However, the last integral is finite since $M$ is $C^2$ up to $\partial_\infty \HH^n$, and thus $\varphi$ has finite total curvature.
\end{proof}
In view of Lemma \ref{prop_mazet}, we briefly survey on some boundary regularity results for solutions of Plateau's problem. To the best of our knowledge, we just found regularity results for hypersurfaces. Let $M^m \ra \HH^{m+1}$ be a solution of Plateau's problem for a compact, $(m-1)$-dimensional submanifold $\Sigma^{m-1} \subset \partial_\infty \HH^{m+1}$. Then, a classical result of  Hardt and Lin \cite{HardtLin} states that if $\Sigma^{m-1} \hookrightarrow \partial_\infty \HH^{m+1}$ is properly embedded and $C^{1,\alpha}$, with $0 \le \alpha \le 1$, near $\Sigma$ each solution $M^m \ra \HH^{m+1}$ of Plateau's problem is a finite collection of $C^{1,\alpha}$-manifolds with boundary, which are disjoint except at the boundary. Therefore, near $\Sigma$, $M$ can locally be described as a graph, and the higher regularity theory in \cite{Lin} \cite{Lin2},  \cite{tonegawa2}, \cite{tonegawa} applies to give the following: if $\Sigma$ is $C^{j,\alpha}$, then $M$ is $C^{j,\alpha}$ up to $\partial_\infty \HH^{m+1}$ whenever 
\begin{itemize}
\item[-] $1 \le j \le m-1$ and $0 \le \alpha \le 1$, or
\item[-] $j=m$ and $0 < \alpha < 1$, or
\item[-] $j \ge m+1$ and $0<\alpha<1$ (if $m$ is odd, under a further condition on $\Sigma$).
\end{itemize}
The reader can consult the statement and references in \cite{Lin2}. In particular, because of Lemma \ref{prop_mazet}, if $\Sigma$ is $C^{2,\alpha}$ for some $0<\alpha<1$ then $M$ has finite total curvature (provided that it is smooth). 

\section*{Appendix 2: the intrinsic monotonicity formula}
We conclude by recalling an intrinsic version of the monotonicity formula. To state it, we premit the following observation due to H. Donnelly and N. Garofalo,  Proposition 3.6 in \cite{donnellygarofalo}.

\begin{proposition}\label{prop_garo}
For $k \ge 0$, the function 
\begin{equation}\label{monoto_vk}
\frac{V_k(s)}{v_k(s)} \qquad \text{is non-decreasing on } \, \R^+.
\end{equation}
\end{proposition}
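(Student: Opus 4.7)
The plan is to reduce the claim to the pointwise inequality $V_k(s)\, v_k'(s) \le v_k(s)^2$ on $\mathbb{R}^+$, which is equivalent to the monotonicity since by the quotient rule
$$
\left(\frac{V_k}{v_k}\right)'(s) \;=\; 1 \;-\; \frac{V_k(s)\,v_k'(s)}{v_k(s)^2}.
$$

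The structural fact driving the argument is that the logarithmic derivative of $v_k$ is non-increasing on $\mathbb{R}^+$. Indeed, from $v_k = \omega_{m-1}\sn_k^{m-1}$ we have
$$
\frac{v_k'(s)}{v_k(s)} \;=\; (m-1)\,\frac{\sn_k'(s)}{\sn_k(s)},
$$
and the ratio $\sn_k'/\sn_k$ equals $1/s$ when $k=0$ and $\sqrt{k}\coth(\sqrt{k}\,s)$ when $k>0$, both manifestly decreasing on $\mathbb{R}^+$. Equivalently, $\log v_k$ is concave. (An alternative verification: $\sn_k'/\sn_k$ satisfies the Riccati equation $y'=k-y^2$ with $y>\sqrt{k}$ for $k>0$ and $y>0$ for $k=0$, so $y'\le 0$.)

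Once this monotonicity is established, the argument is a one-line integration. For every $\sigma\in[0,s]$, the monotonicity of $v_k'/v_k$ gives
$$
v_k(\sigma)\,\frac{v_k'(s)}{v_k(s)} \;\le\; v_k(\sigma)\,\frac{v_k'(\sigma)}{v_k(\sigma)} \;=\; v_k'(\sigma).
$$
Integrating in $\sigma$ over $[0,s]$ and using $v_k(0)=0$ (the case $m=1$ is trivial since then $v_k$ is constant and $v_k'\equiv 0$), one obtains
$$
\frac{v_k'(s)}{v_k(s)}\,V_k(s) \;\le\; \int_0^s v_k'(\sigma)\,d\sigma \;=\; v_k(s),
$$
which, multiplying by $v_k(s)>0$, is exactly $V_k(s)\,v_k'(s)\le v_k(s)^2$.

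I do not foresee any serious obstacle: the proof rests on the elementary log-concavity of the radial volume density $v_k=\omega_{m-1}\sn_k^{m-1}$ together with a single integration of a pointwise inequality, both of which follow immediately from the closed-form expression of $\sn_k$.
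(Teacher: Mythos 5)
Your proof is correct and rests on the same key observation as the paper's, namely that $v_k'/v_k$ is non-increasing (log-concavity of $v_k$); the paper then closes by citing a lemma from Cheeger--Gromov--Taylor, whereas you supply a self-contained one-line integration in its place, which is a clean and welcome substitute but not a different mathematical route.
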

\begin{proof}
The ratio $v_k'/v_k$ is monotone decreasing by the very definition of $v_k$. Then, since $v_k'>0$, the desired monotonicity follows from a lemma at p. 42 of \cite{cheegergromovtaylor}.
\end{proof}

\begin{proposition}[The intrinsic monotonicity formula]\label{prop_monointri}
Suppose that $N$ has a pole $\bar o$ and satisfies \eqref{pinchsectio}, and let $\varphi : M^m \ra N^n$ be a complete, minimal immersion. Suppose that $\bar o \in \varphi(M)$, and choose $o \in M$ be such that $\varphi(o) = \bar o$. Then, denoting with $\rho$ the intrinsic distance function from $o$ and with $B_s = \{ \rho \le s\}$, 
\begin{equation}\label{intrinsicquotient}
\frac{\vol(B_s)}{V_k(s)}
\end{equation}
is monotone non-decreasing on $\R^+$.
\end{proposition}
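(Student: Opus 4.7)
The plan is to adapt the argument behind the extrinsic monotonicity formula (Proposition \ref{prop_monotonicity}) by integrating the same distinguished function $\tilde f = f(r)$ over the intrinsic ball $B_s$ rather than the extrinsic annulus, and then comparing via the pointwise inequality $r \le \rho$.

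First I would observe the elementary comparison $r(x) \le \rho(x)$ on $M$: since $\varphi$ is an isometric immersion, for every piecewise smooth curve $\gamma:[0,1]\to M$ joining $o$ to $x$ one has $L_N(\varphi\circ\gamma)=L_M(\gamma)$, and $\varphi\circ\gamma$ joins $\bar o$ to $\varphi(x)$ in $N$, so $\bar\rho(\varphi(x))\le L_M(\gamma)$; taking the infimum gives $r \le \rho$. Next, take $f$ as in \eqref{def_f} and $\tilde f \doteq f(r)\in C^2(M)$. The chain of inequalities \eqref{basica}--\eqref{deltaf} used in the proof of Proposition \ref{prop_monotonicity} uses only minimality of $\varphi$ and the upper bound in \eqref{pinchsectio}, and yields
\begin{equation*}
\Delta\tilde f \,\ge\, 1 + (1-|\nabla r|^2)\,z(r) \,\ge\, 1 \quad\text{on } M,
\end{equation*}
where $z\ge 0$ on $\R^+$ as in \eqref{derizeta}. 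Moreover $f'(s)=V_k(s)/v_k(s)$, which is non-decreasing on $\R^+$ by Proposition \ref{prop_garo}.

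Now I would integrate the inequality $\Delta\tilde f\ge 1$ against the Lipschitz cut-off $\eta_\eps(x)=\min\{1,\eps^{-1}(s-\rho(x))_+\}$; since $M$ is complete, $\supp\eta_\eps=\overline{B_s}$ is compact, and $|\nabla\rho|=1$ almost everywhere, so integration by parts gives
\begin{equation*}
\int_M \eta_\eps \,\le\, \int_M (\Delta\tilde f)\eta_\eps \,=\, \frac{1}{\eps}\int_{\{s-\eps<\rho<s\}} \langle\nabla\tilde f,\nabla\rho\rangle.
\end{equation*}
Letting $\eps\to 0^+$ via the coarea formula for the Lipschitz function $\rho$ yields, for a.e.\ $s$,
\begin{equation*}
\vol(B_s) \,\le\, \int_{\partial B_s}\langle \nabla\tilde f,\nabla\rho\rangle \,d\haus^{m-1}.
\end{equation*}
On $\partial B_s$ one has $r\le\rho=s$, hence by monotonicity of $f'$
\begin{equation*}
\langle\nabla\tilde f,\nabla\rho\rangle \,\le\, f'(r)\,|\nabla r| \,\le\, f'(s) \,=\, \frac{V_k(s)}{v_k(s)}.
\end{equation*}
Finally, setting $V(s)\doteq\vol(B_s)$ and using the coarea formula $V'(s)=\haus^{m-1}(\partial B_s)$ for a.e.\ $s$, the above two displays combine to
\begin{equation*}
\frac{V_k'(s)}{V_k(s)} \,=\, \frac{v_k(s)}{V_k(s)} \,\le\, \frac{V'(s)}{V(s)},
\end{equation*}
whose integration yields the desired monotonicity of $V(s)/V_k(s)$.

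The only non-routine point will be justifying the limit $\eps\to 0$, because the cut locus makes $\partial B_s$ merely Lipschitz, not smooth; however, this is handled by the coarea formula applied to the Lipschitz function $\rho$, exactly as in Lemma \ref{lem_coarea}. Everything else is a direct transplantation of the proof of Proposition \ref{prop_monotonicity}, the two new ingredients being the pointwise bound $r\le\rho$ (which turns $r\le s$ on $\partial B_s$) and the monotonicity of $V_k/v_k$ from Proposition \ref{prop_garo}.
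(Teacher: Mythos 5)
Your proposal is correct and follows essentially the same route as the paper: integrate $\Delta(f\circ r)\ge 1$ over the intrinsic ball $B_s$, pass to the boundary integral, use $r\le\rho$ together with the monotonicity of $f'=V_k/v_k$ from Proposition~\ref{prop_garo} to bound $\langle\nabla(f\circ r),\nabla\rho\rangle\le V_k(s)/v_k(s)$, and integrate the resulting differential inequality $\vol(B_s)\le\frac{V_k(s)}{v_k(s)}\vol(\partial B_s)$. The only distinction is that you explicitly justify the divergence step with a Lipschitz cut-off and the coarea formula to handle the cut locus of $\rho$, a technical point the paper leaves implicit.
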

\begin{proof}
We refer to Proposition \ref{prop_monotonicity} for definitions and computations. We know that the function $\psi= f\circ r$, with $f$ as in \eqref{def_f}, solves $\Delta \psi \ge 1$ on $M$. Integrating on $B_s$ and using the definition of $\psi$ we obtain
$$
\vol(B_s) \le \int_{B_s}\Delta \psi = \int_{\partial B_s}\langle \nabla \psi, \nabla \rho \rangle \le \int_{\partial B_s} \frac{V_k(r)}{v_k(r)}.
$$
Next, since $\bar o = \varphi(o)$, it holds $r(x) \le \rho(x)$ on $M$. Using then Proposition \ref{prop_garo},  we deduce
$$
\vol(B_s) \le \frac{V_k(s)}{v_k(s)} \vol(\partial B_s).
$$
Integrating we obtain the monotonicity of the desired \eqref{intrinsicquotient}.
\end{proof}

\vspace{0.5cm}

\noindent \textbf{Acknowledgements}.\\
The second author is supported by the grant PRONEX - N\'ucleo de An\'alise Geom\'etrica e Aplicac\~oes Processo nº PR2-0054-00009.01.00/11. The third author is partially supported by CNPq. The second author would like to thank L. Hauswirth and L. Mazet for an interesting discussion on finite total curvature submanifolds of $\HH^n$, A. Figalli and G.P. Bessa for a hint, P. Castillon for a bibliographical suggestion and V. Gimeno for pleasant conversations and various comments that lead to several improvements after we posted a first version of the paper on arXiv.

% BibTeX users please use one of
%\bibliographystyle{spbasic}      % basic style, author-year citations
%\bibliographystyle{spmpsci}      % mathematics and physical sciences
%\bibliographystyle{spphys}       % APS-like style for physics
%\bibliography{}   % name your BibTeX data base

% Non-BibTeX users please use

\end{document}